\theoremstyle{definition}
\newtheorem* {theorem*}{Theorem}
\newtheorem* {conjecture*}{Conjecture}
\newtheorem{theorem}{Theorem}[section]
\theoremstyle{definition}
\newtheorem* {example*}{Example}
\newtheorem{lemma}[theorem]{Lemma}
\theoremstyle{definition}
\newtheorem{definition}[theorem]{Definition}
\theoremstyle{definition}
\newtheorem{conjecture}[theorem]{Conjecture}
\newtheorem{proposition}[theorem]{Proposition}
\newtheorem{corollary}[theorem]{Corollary}
\newtheorem{algorithm}[theorem]{Algorithm}
\newtheorem *{remark}{Remark}
\theoremstyle{definition}
\newtheorem {example}[theorem]{Example}
\theoremstyle{definition}
\theoremstyle{definition}
\theoremstyle{definition}
\def\({\left(}
\def\){\right)}
\newcommand{\sP}{\mathscr{P}}
\newcommand{\cR}{\mathcal{R}}
\newcommand{\cC}{\mathcal{C}}
\newcommand{\cD}{\mathcal{D}}
\def\NN{\mathbb{N}}
\def\RR{\mathbb{R}}
\def\ZZ{\mathbb{Z}}
\def\Aut{\mathrm{Aut}}
\def\spanning{\textnormal{-span}}
\newcommand{\cN}{\mathcal{N}}
\def\barr{\begin{array}}
\def\earr{\end{array}}
\def\ba{\begin{aligned}}
\def\ea{\end{aligned}}
\def\be{\begin{equation}}
\def\ee{\end{equation}}
\def\qquand{\qquad\text{and}\qquad}
\def\qquord{\qquad\text{or}\qquad}
\def\I{\mathcal{I}}
\def\cH{\mathcal H}
\def\DesR{\mathrm{Des}_R}
\def\DesL{\mathrm{Des}_L}
\def\hs{\hspace{0.5mm}}
\def\id{\mathrm{id}}
\def\PP{\mathbb{P}}
\def\ben{\begin{enumerate}}
\def\een{\end{enumerate}}
\def\cT{\mathscr{T}}
\def\hs{\hspace{0.5mm}}
\def\Des{\mathrm{Des}}
\def\ellhat{\hat\ell}
\def\x{\textbf{x}}
\def\y{\textbf{y}}
\def\a{\textbf{a}}
\def\b{\textbf{b}}
\newcommand{\xRightarrow}[2][]{\ext@arrow 0359\Rightarrowfill@{#1}{#2}}
\renewcommand{\r}[1]{\textcolor{red}{#1}}
\newcommand{\cA}{\mathcal{A}}
\newcommand{\cB}{\mathcal{B}}
\def\iR{\hat\cR}
\def\iHR{\cH\hat\cR}
\def\F{\mathscr{F}}
\def\arcstart{\ \xy<0cm,-.15cm>\xymatrix@R=.1cm@C=.3cm }
\newcommand{\arcstartc}[1]{\ \xy<0cm,-.15cm>\xymatrix@R=.1cm@C=#1cm}
\def\ellhat{\hat\ell}
\def\s{\mathbf{s}}
\def\t{\mathbf{t}}
\def\r{\mathbf{r}}
\def\customstar{\hs\hat \bullet_{\mathrm{des}}\hs}
\def\sA{\mathscr{A}}
\def\sB{\mathscr{B}}
\def\sR{\mathscr{R}}
\def\sC{\mathscr{C}}
\def\ihB{\underline{\isB}}
\newcommand{\sAq}[1]{\sA_{\bullet,\geq #1}}
\newcommand{\sAp}[1]{\sA_{\leq #1,\bullet}}
\def\sX{\mathscr{X}}
\def\sY{\mathscr{Y}}
\def\u{\mathbf{u}}
\def\v{\mathbf{v}}
\def\isB{\hat{\mathscr{B}}}
\def\atildediagram{
\begin{tikzpicture}[
start chain,
node distance=5mm, 
ch/.style={on chain,inner sep=2pt},
chj/.style={ch,join}
]
\node[chj]{1};
\node[chj]{2};
\node[chj]{3};
\node[chj]{$\cdots$};
\node[chj]{$n$};
\begin{scope}[start chain=br going above]
\chainin(chain-3);
\node[ch,join=with chain-1,join=with chain-5] {0};
\end{scope}
\end{tikzpicture}
}
\def\btildediagram{
\begin{tikzpicture}[
node distance=5mm, 
every node/.style={on chain,join,inner sep=1mm}, 
every join/.style={-}
]
{ [start chain=trunk]
\node {$1$};
\node {$2$};
{ [start branch=up going above] } 
\node {$\cdots$};
\node(a) {$(n-1)$};
\node(b) {$n$};
{ [continue branch=up]
\node [on chain] {$0$};
}
\draw[double] (a) -- (b);
 }
\end{tikzpicture}
}
\def\ctildediagram{
\begin{tikzpicture}[
node distance=5mm, 
every node/.style={on chain,join,inner sep=1mm}, 
every join/.style={-}
]
{ [start chain=trunk]
\node(x) {$0$};
\node(y) {$1$};
\node {$2$};
\node {$\cdots$};
\node(a) {$(n-1)$};
\node(b) {$n$};
\draw[double] (a) -- (b);
\draw[double] (x) -- (y);
 }
\end{tikzpicture}
}
\def\dtildediagram{
\begin{tikzpicture}[
node distance=5mm, 
every node/.style={on chain,join,inner sep=1mm}, 
every join/.style={-}
]
{ [start chain=trunk]
\node {$1$};
\node {$2$};
{ [start branch=up going above] } 
\node {$\cdots$};
\node(a) {$(n-2)$};
{ [start branch=upp going above] } 
\node(b) {$n-1$};
{ [continue branch=up]
\node [on chain] {$0$};
}
{ [continue branch=upp]
\node [on chain] {$n$};
}
 }
\end{tikzpicture}
}
\def\hthreediagram{
\begin{tikzpicture}
\node[inner sep=1mm] (a) at (-2,0) {
  $a$
};
  \node[inner sep=1mm] (b) at (-1,0) {
  $x$
};
  \node[inner sep=1mm] (c) at (0,0) {
  $b$
};
\draw[double] (b) --  node [anchor=south] {5} (c);
\draw
(a)   --  (b) 
;
\end{tikzpicture}
}
\def\athreediagram{
\begin{tikzpicture}
\node[inner sep=1mm] (a) at (-2,0) {
  $a$
};
  \node[inner sep=1mm] (b) at (-1,0) {
  $x$
};
  \node[inner sep=1mm] (c) at (0,0) {
  $b$
};
\draw
(a)   --  (b)
(b) -- (c)
;
\end{tikzpicture}
}
\def\bthreediagram{
\begin{tikzpicture}
\node[inner sep=1mm] (a) at (-2,0) {
  $a$
};
  \node[inner sep=1mm] (b) at (-1,0) {
  $x$
};
  \node[inner sep=1mm] (c) at (0,0) {
  $b$
};
\draw[double] (b) -- node [anchor=south] {4} (c);
\draw
(a)   --  (b) 
;
\end{tikzpicture}
}
\def\hthreediagram{
\begin{tikzpicture}
\node[inner sep=1mm] (a) at (-2,0) {
  $a$
};
  \node[inner sep=1mm] (b) at (-1,0) {
  $x$
};
  \node[inner sep=1mm] (c) at (0,0) {
  $b$
};
\draw[double] (b) --  node [anchor=south] {5} (c);
\draw
(a)   --  (b) 
;
\end{tikzpicture}
}
\def\dfourdiagram{
\begin{tikzpicture}
\node[inner sep=1mm] (a) at (-2,0) {
  $a$
};
  \node[inner sep=1mm] (b) at (-1,0) {
  $x$
};
  \node[inner sep=1mm] (c) at (-1,0.8) {
  $c$
};
  \node[inner sep=1mm] (d) at (0,0) {
  $b$
};
\draw
(a)   --  (b) 
(b) -- (c)
(b) -- (d)
;
\end{tikzpicture}
}
\def\andiagram{
\begin{tikzpicture}[
start chain,
node distance=5mm, 
every node/.style={on chain,join,inner sep=1mm}, 
every join/.style={-}
]
\node {1};
\node  {2};
\node {$\cdots$};
\node  {$n$};
\end{tikzpicture}
}
\def\bndiagram{
\begin{tikzpicture}[
start chain,
node distance=5mm, 
every node/.style={on chain,join,inner sep=1mm}, 
every join/.style={-}
]
\node {1};
\node {$\cdots$};
\node(a)  {$(n-1)$};
\node(b)  {$n$};
\draw[double] (a) -- (b);
\end{tikzpicture}
}
\def\dndiagram{
\begin{tikzpicture}[
node distance=5mm, 
every node/.style={on chain,join,inner sep=1mm}, 
every join/.style={-}
]
{ [start chain=trunk]
\node {1};
\node {$\cdots$};
\node [on chain] {$(n-2)$};
{ [start branch=up going above] } 
\node [on chain] {$(n-1)$};
{ [continue branch=up]
      \node [on chain] {$n$};
    }
 }
\end{tikzpicture}
}
\numberwithin{equation}{section}
\renewcommand{\@makefnmark}{\mbox{\textsuperscript{}}}
\begin{document}
\title{Braid relations for involution words in affine Coxeter groups}

\author{
    Eric Marberg \\
    Department of Mathematics \\
    HKUST \\
    {\tt eric.marberg@gmail.com}
}

\date{}

\maketitle

\begin{abstract}
We describe an algorithm to identify a minimal set of ``braid relations''
which span and preserve all sets of involution words for twisted Coxeter systems of finite or affine type. 
We classify  the cases  in which adding the smallest possible set of ``half-braid'' relations to the ordinary
braid relations produces a spanning set: in the untwisted case, this occurs for the
Coxeter systems  which are finite with rank two or type $A_n$, or affine with rank three or type $\tilde A_n$.
These results generalize recent work of Hu and Zhang on the finite
classical cases.
\end{abstract}

\setcounter{tocdepth}{2}

\section{Introduction}

Let $(W,S)$ be a Coxeter system with length function $\ell : W \to \NN$.
Define a \emph{word} to be any finite sequence of elements of $S$.
There exists a unique associative operation $\circ : W\times W \to W$
such that  
$v\circ w = vw$ if $\ell(vw) = \ell(v) + \ell(w)$ and $s\circ s = s$ if $s \in S$ \cite[Theorem 7.1]{Humphreys}.
One way of defining a \emph{reduced word} for $w \in W$ is as a word $(s_1, s_2,\dots, s_n)$
of minimal possible length $n$ such that $w = s_1 \circ s_2 \circ \cdots \circ s_n$.
Let $\cR(w)$ be the set of reduced words for $w$. It is a fundamental result of Matsumoto (see \cite[\S1.2]{GP}) that $\cR(w)$ is spanned and preserved by 
the \emph{braid relations} of $(W,S)$, which one defines as the symmetric relations on words of the form
\be
\label{braid-eq}
 (\text{ --- }, \underbrace{s,t,s,t,s,\dots}_{m\text{ terms}},\text{ --- }) \sim (\text{ --- },\underbrace{t,s,t,s,t,\dots}_{m\text{ terms}},\text{ --- }) 
\ee
for $s, t \in S$ with $m=m(s,t)<\infty$,
where $m(s,t)$ is the order of the product $st \in W$
and
 the
corresponding `` --- '' symbols on the left and right stand for arbitrary, identical subsequences.

Let $* \in \Aut(W)$ be an involution with $S=S^*$.
We write $w^*$ for the image of $w \in W$ under $*$, and call $(W,S,*)$ a \emph{twisted Coxeter system}.
Consider the following variation of $\cR(w)$:
 define $\iR_*(w)$ as the set of words $(s_1,s_2,\dots,s_n)$
 of minimal possible length $n$ such that 
$
 w = s_n^* \circ \cdots \circ s_2^* \circ s_1^* \circ s_1 \circ s_2 \circ \cdots \circ s_n.
 $
This set is nonempty if and only if $w$ belongs to the set of \emph{twisted involutions}
$\I_* = \I_*(W) = \{ w \in W : w^* = w^{-1}\}.$
The sequences in $\iR_*(w)$ are the appropriate ``involution'' analogue of reduced words,
and have been studied in a few different places under various names and left/right conventions.
See, for example, the papers of Richardson and Springer \cite{RichSpring,RichSpring2},
Hultman \cite{H1,H2,H3}, Hu and Zhang \cite{HuZhang1,HuZhang2}, and Hamaker, Marberg, and Pawlowski \cite{HMP1,HMP2}. Following \cite{HMP1}, we refer to 
the elements of $\iR_*(w)$ as \emph{involution words}.
These objects naturally come up in the study of certain symmetric varieties \cite{CJ,CJW,WY} and 
 Iwahori-Hecke algebra modules \cite{L0,L2,L3}.

Some recent papers \cite{HMP2, HuZhang1, HuZhang2,HuZhang3}
have considered the problem of finding a set of ``involution braid relations'' which span and preserve  $\iR_*(w)$ for all $w \in \I_*$.
The associativity of the product $\circ$ implies that
each of these sets is preserved 
by the ordinary braid relations \eqref{braid-eq},
but these usually fail to span $\iR_*(w)$.
For example, suppose $s,t \in S$ and $m\geq 1$ are such that the $m$-element words $(s,t,s,t,\dots)$ and $(t,s,t,s,\dots)$ both belong to $\iR_*(z)$
for some $z \in \I_*$. As explained in Section~\ref{rel-sect},  this occurs if and only if $m=m_*(s,t)$ as defined by \eqref{theta-eq},
in which case $m$ is  
either $\frac{1}{2} m(s,t)$ or $\frac{1}{2} m(s,t)+\frac{1}{2}$ or $\frac{1}{2} m(s,t)+1$ or $m(s,t)$. 
%
Under this condition, the symmetric word relation 
\be
\label{folded-eq}
( \underbrace{s,t,s,t,\dots}_{m\text{ terms}},\text{ --- }) \sim (\underbrace{t,s,t,s,\dots}_{m\text{ terms}},\text{ --- }) 
\ee
preserves $\iR_*(w)$, but not necessarily $\cR(w)$. We refer to the relations of this type with $m<m(s,t)$
as the \emph{half-braid relations} of $(W,S,*)$.

\begin{example} \label{ex-1}
Suppose $W$ is the symmetric group $S_4$ and $S = \{s_1,s_2,s_3\}$ where $s_i = (i,i+1)$, so that $(W,S)$ has type $A_3$.
 Let $*=\id$. 
The  half-braid relations of $(W,S,*)$ are 
$ (s_1,s_2,\text{ --- }) \sim (s_2,s_1,\text{ --- })$ and $(s_2,s_3,\text{ --- }) \sim (s_3,s_2,\text{ --- }).$
These and the braid relations of $(W,S)$ span $\iR_*(w)$ for all $w \in \I_*$.
The elements of $\iR_*(4321)$ are 
$ (1,3,2,1) \sim (\textbf3,\textbf1,2,1)\sim (3,\textbf2,\textbf1,\textbf2) \sim(\textbf2,\textbf3,1,2) 
\sim (2,\textbf1,\textbf3,2) \sim (\textbf1,\textbf2,3,2)\sim (1,\textbf3,\textbf2,\textbf3)
\sim
(\textbf3,\textbf1,2,3),
$
where  we write $(i,j,\dots)$ in place of $(s_i,s_j,\dots)$.
 \end{example}

We define $(W,S,*)$ to be \emph{perfectly braided} if, as in the preceding example,
each set $\iR_*(w)$ for $w \in \I_*$ is an equivalence class under the transitive relation generated by the ordinary and half-braid relations \eqref{braid-eq} and \eqref{folded-eq}.
The following, surprising fact is equivalent to \cite[Theorem 3.1]{HuZhang1}.

 \begin{theorem}[Hu and Zhang \cite{HuZhang1}] \label{hz-thm}
 If $(W,S)$ has type $A_n$ and $*=\id$, then the 
 twisted Coxeter system $(W,S,*)$ is perfectly braided.
  \end{theorem}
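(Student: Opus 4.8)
The plan is to work inside the symmetric group $W = S_{n+1}$ with $*=\id$, where $\I_*$ consists of all involutions and involution words record the ``half-reduced'' factorizations $w = s_n\circ\cdots\circ s_1\circ s_1\circ\cdots\circ s_n$. I would first set up a combinatorial model for involution words in type $A$: a fixed-point-free-free involution $w$ corresponds to a partial matching on $\{1,2,\dots,n+1\}$, and an involution word for $w$ encodes a way of building this matching one ``move'' at a time, where each generator $s_i$ either introduces a new arc, slides an endpoint of an existing arc, or (in the $*=\id$ case) creates a fixed point getting absorbed. A clean way to package this is via the \emph{involution Rothe diagram} or via the bijection with certain fillings/tableaux used by Hamaker--Marberg--Pawlowski \cite{HMP1,HMP2}; I would pick whichever makes the local moves most transparent. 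The key structural input is the description of $m_*(s,t)$ from Section~\ref{rel-sect}: for $s=s_i,t=s_j$ adjacent in type $A$ one has $m(s,t)=3$ and $m_*(s,t)=2$, so the only half-braid relations are $(s_i,s_{i+1},\text{---})\sim(s_{i+1},s_i,\text{---})$ as in Example~\ref{ex-1}, while commuting generators give $m_*=m=2$ and contribute nothing new. So the entire claim reduces to: every two involution words for a fixed involution $w\in S_{n+1}$ are connected by ordinary braid moves together with these length-two initial swaps.

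The main step is an induction on $n$ (equivalently on the rank) that mimics the standard proof of Matsumoto's theorem. Given two involution words $\a = (a_1,a_2,\dots,a_\ell)$ and $\b=(b_1,\dots,b_\ell)$ for $w$, I would analyze the \emph{first letters} $a_1$ and $b_1$. If $a_1=b_1=s$, then deleting the first letter gives involution words for a smaller twisted involution $w'$ in a parabolic-type situation, and one finishes by induction (this uses that the relevant relations only ever touch a bounded window, so they descend to the truncated words). If $a_1=s\ne t=b_1$, the crux is an \emph{exchange/lifting argument}: I must show there is an involution word for $w$ beginning with the half-braid string $(s,t,s,t,\dots)$ of length $m_*(s,t)$, so that one can apply \eqref{folded-eq} to ``pivot'' between starting with $s$ and starting with $t$, then recurse. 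Concretely, for type $A$ this means: if $w$ has involution words starting with $s_i$ and with $s_{i+1}$, then it has one starting with $(s_i,s_{i+1})$ (hence also one starting with $(s_{i+1},s_i)$). Establishing this requires understanding, in the matching model, how the partial matching for $w$ must look near positions $i,i+1,i+2$ when both ``descents'' are available, and then explicitly exhibiting the desired word — this is where a careful case analysis (new arc vs. endpoint slide vs. fixed-point absorption at positions $i$ and $i+1$) is unavoidable.

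I expect the exchange/lifting step to be the main obstacle: unlike ordinary reduced words, where the Exchange Condition is a clean group-theoretic fact, here one is working with the ``Demazure-type'' product $s\dact w = s^*\circ w\circ s$ (written $\idem{s}{w}$ in the paper's notation), and the analogue of the Exchange Condition for twisted involutions is more delicate — in particular the length can drop by $2$ in one step, and the half-braid relation replaces a would-be length-$m(s,t)$ common prefix with a length-$m_*(s,t)$ one. I would handle this by invoking the known combinatorics of the poset of twisted involutions under Bruhat order (Richardson--Springer \cite{RichSpring}, Hultman \cite{H1,H2,H3}), which gives exactly the statement that two covering-type reductions from $w$ can be reconciled, and then checking that the reconciliation is realized by one of our allowed relations — using the type-$A$ classification of $m_*(s,t)$ to ensure no relation longer than a genuine braid or a length-two half-braid is ever needed. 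The verification that these local moves suffice to connect \emph{all} of $\iR_*(w)$, and not just adjacent reductions, then follows by the usual ``connectedness of the word graph'' argument once the single-step exchange is in hand.
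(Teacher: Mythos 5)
There is a genuine gap at the foundation of your induction. You propose to compare two involution words $\a,\b\in\iR_*(w)$ by their \emph{first} letters and, when $a_1=b_1=s$, to ``delete the first letter'' and recurse. But deleting the first letter of an involution word does not produce an involution word: the recursive structure of $\iR_*(w)$ lives at the \emph{right} end (if $s\in\DesR(w)$ then $w$ has an involution word ending in $s$, obtained by appending $s$ to any word in $\iR_*(y)$ for the element $y$ with $y<s^*\circ y\circ s=w$), whereas the half-braid relations \eqref{folded-eq} act only at the \emph{left} end. Concretely, in Example~\ref{ex-1} one has $(1,2,3,2)\in\iR_*(4321)$, but $(2,3,2)$ is not an involution word for anything: it evaluates to $s_2\circ s_3\circ s_2\circ s_2\circ s_3\circ s_2=s_2s_3s_2$, whose involution length is $2$, not $3$. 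So the case $a_1=b_1$ of your induction does not reduce to a smaller instance of the same problem, and the case $a_1\neq b_1$ (your exchange/lifting step) cannot be closed by ``recursing'' either.

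The natural repair --- viewing $(a_2,\dots,a_\ell)$ as a saturated chain from $s$ to $w$ in the weak order on $\I_*$ and trying to connect two such chains --- lands you exactly on the central difficulty of the subject: by Theorem~\ref{suff-thm} such chains are connected by the \emph{generalized} half-braid relations \eqref{suff-eq}, which carry an arbitrary prefix $(r_1,\dots,r_k)$, and the whole content of ``perfectly braided'' is that in type $A_n$ these prefixed relations are consequences of the prefix-free ones in $\isB$. Your proposal never engages with this reduction; it implicitly assumes that every relation one needs can be applied at the very start of the word. Note also that the paper does not prove Theorem~\ref{hz-thm} directly: it cites Hu and Zhang (whose argument is an intricate case analysis in $S_{n+1}$) and independently rederives the statement from Theorem~\ref{main-thm}, whose proof is the computational elimination of prefixed relations via the braid-system algorithms of Section~\ref{alg-sect} together with the bounded-embedding argument of Example~\ref{atilde-ex}. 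Any correct direct proof must contain some substitute for that elimination step.
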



A more general, but weaker result is shown in \cite{HMP2}.
Let $s, t\in S$ be distinct elements and suppose $r_1,\dots, r_k \in S$ and $m \in \PP$ are such that $(k+m)$-element words
$(r_1,\dots,r_k,s,t,s,\dots)$ and $(r_1,\dots,r_k, t,s,t,\dots)$ belong to $\iR_*(z)$
for a common element $z \in \I_*$. The symmetric word relation  
\be
\label{suff-eq}
 (r_1,r_2,\dots,r_k, \underbrace{s,t,s,\dots}_{m\text{ terms}},\text{ --- }) \sim (r_1,r_2,\dots,r_k,\underbrace{t,s,t,\dots}_{m\text{ terms}},\text{ --- }).
\ee
then preserves $\iR_*(w)$ for all $w \in \I_*$. We refer to the relations arising in this way as the \emph{generalized half-braid relations} for $(W,S,*)$.
The following is equivalent to \cite[Theorem 7.9]{HMP2}.

\begin{theorem}[Hamaker, Marberg, and Pawlowski \cite{HMP2}]
\label{suff-thm}
In any twisted Coxeter system $(W,S,*)$, each
set $\iR_*(w)$ for $w \in \I_*$ is spanned and preserved by the generalized half-braid relations \eqref{suff-eq}.
\end{theorem}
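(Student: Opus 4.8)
The plan is to establish the sharper assertion that any two words in $\iR_*(w)$ are connected by a finite chain of generalized half-braid relations \eqref{suff-eq}, proceeding by induction on the common length $n=\ellhat(w)$ of those words (where $\ellhat(w)$ is the length shared by all members of $\iR_*(w)$). That \eqref{suff-eq} \emph{preserves} each $\iR_*(w)$ has already been recorded in the paragraph introducing it, so only the spanning statement needs an argument; I will freely use Matsumoto's theorem for ordinary reduced words, as quoted in the introduction.

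Two structural facts drive everything. The first is the ``atom'' reformulation: writing $\circ$ for the Demazure product and $v^{-*}:=(v^{-1})^*$, a word $(s_1,\dots,s_n)$ lies in $\iR_*(w)$ precisely when $n=\ellhat(w)$ and, setting $v:=s_1\circ\cdots\circ s_n$, one has $\ell(v)=n$ and $w=v^{-*}\circ v$; hence $\iR_*(w)=\bigsqcup_v\cR(v)$, the disjoint union being over those $v\in W$ of minimal length with $v^{-*}\circ v=w$. In particular every prefix of a word in $\iR_*(w)$ is again an involution word. The second is a ``peeling'' operation: letting $\D_R(w)\subseteq S$ denote the set of letters that can occur last in some word of $\iR_*(w)$, for each $s\in\D_R(w)$ there is a unique $w_s\in\I_*$ with $\ellhat(w_s)=n-1$ such that deleting a terminal $s$ is a bijection from $\{\,\a\in\iR_*(w):\a\text{ ends in }s\,\}$ onto $\iR_*(w_s)$. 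Finally, one small bookkeeping remark: appending a single fixed letter to the end of every word in an instance of \eqref{suff-eq} produces another valid instance of \eqref{suff-eq}, since it merely lengthens the trailing `` --- '' block while leaving the side condition on $(r_1,\dots,r_k)$, $s$, $t$, $m$ untouched. Consequently a chain of relations \eqref{suff-eq} joining two words in $\iR_*(w_s)$ lifts, after re-appending $s$, to such a chain joining the corresponding words in $\iR_*(w)$.

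Now the induction; the cases $n\le 1$ are trivial. Let $\a,\b\in\iR_*(w)$ have last letters $s$ and $t$, both in $\D_R(w)$. If $s=t$, peel the terminal $s$ from both, apply the inductive hypothesis inside $\iR_*(w_s)$, and lift the resulting chain back as above. Suppose instead $s\neq t$. The key input (stated and discussed in the next paragraph) then provides, with $m:=m_*(s,t)$, elements $r_1,\dots,r_k\in S$ with $k=n-m\ge 0$ such that the two length-$n$ words
\[
\u=\bigl(r_1,\dots,r_k,\underbrace{s,t,s,\dots}_{m}\bigr)
\qquand
\v=\bigl(r_1,\dots,r_k,\underbrace{t,s,t,\dots}_{m}\bigr)
\]
both lie in $\iR_*(w)$. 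By construction \eqref{suff-eq} with this prefix and this value of $m$ is a legitimate generalized half-braid relation, so, taking the trailing `` --- '' empty, $\u$ and $\v$ are directly related. Exactly one of $\u,\v$ ends in $s$ and the other ends in $t$; say the former ends in $s$ and the latter in $t$. Then $\a$ and $\u$ both end in $s$, so peeling $s$, applying the inductive hypothesis inside $\iR_*(w_s)$, and lifting back connects $\a$ to $\u$; symmetrically, $\b$ is connected to $\v$ via $\iR_*(w_t)$. Stringing these together connects $\a$ to $\u$, then to $\v$, then to $\b$, which closes the induction.

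The step I expect to be the real obstacle is the key input invoked in the case $s\neq t$: that whenever $s$ and $t$ are distinct members of $\D_R(w)$, the element $w$ admits an involution word whose final $m_*(s,t)$ letters form an alternating $\{s,t\}$-word, and moreover that this holds for \emph{both} of the two alternating words of that length, with a \emph{common} preceding prefix $(r_1,\dots,r_k)$. This is the rank-two ``lifting''/polygon property for twisted involutions, and it is exactly what underlies the definition \eqref{theta-eq} of $m_*(s,t)$. Proving it requires a careful analysis of how the two statistics $\ell$ and $\ellhat$ change under the twisted conjugation $w\mapsto s^{*}\circ w\circ s$ inside the rank-two subsystem generated by $s$ and $t$, split according to the four possibilities for $m_*(s,t)$ noted after \eqref{theta-eq} (namely $\tfrac12 m(s,t)$, $\tfrac12 m(s,t)+\tfrac12$, $\tfrac12 m(s,t)+1$, or $m(s,t)$). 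Once this lemma is in hand, the remaining ingredients --- the atom decomposition, the peeling bijections, and the stability of \eqref{suff-eq} under appending a letter --- are routine.
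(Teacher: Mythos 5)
The paper does not prove this theorem itself---it is quoted from \cite[Theorem 7.9]{HMP2}---but your architecture (induction on $\ellhat_*(w)$, peeling a common final descent, and a rank-two ``polygon'' lemma supplying two alternating suffixes with a common prefix) is exactly the standard argument, and the rank-two lemma you need is precisely what this paper establishes as Proposition~\ref{hb-prop} and Corollary~\ref{twodescent-cor}. The atom decomposition, the peeling bijections, the lifting of relations under appending a letter, and the induction itself are all sound.

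There are two problems with your ``key input,'' however. First, as literally stated it is false: the length of the alternating suffix is not $m_*(s,t)$ but $m_\theta(s,t)$ for $\theta:w\mapsto(ywy^{-1})^*$, where $y$ is the twisted involution left after stripping the dihedral part---so it depends on the ambient element $w$, not only on $s$, $t$, and $*$. Concretely, in type $A_3$ with $*=\id$ and $w=4321$ as in Example~\ref{ex-1}, take $\{s,t\}=\{s_1,s_2\}$, so $m_*(s_1,s_2)=2$; inspecting the eight listed involution words, the words ending in $(s_2,s_1)$ have prefixes $(s_1,s_3)$ and $(s_3,s_1)$ while those ending in $(s_1,s_2)$ have prefixes $(s_3,s_2)$ and $(s_2,s_3)$, so no common prefix of length $2$ exists. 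The correct exponent here is $m=3=m(s_1,s_2)$ (realized by $(3,1,2,1)$ and $(3,2,1,2)$, an ordinary braid relation), matching the ``otherwise'' clause of \eqref{theta-eq} since conjugation by $y=s_3$ does not preserve $\{s_1,s_2\}$. Your prescription $k=n-m_*(s,t)$ therefore targets the wrong prefix length in general. Second, and more seriously, this lemma carries essentially all of the content of the theorem and you explicitly defer its proof: one must show that starting from $z$ with $\{s,t\}\subset\DesR(z)$ and repeatedly peeling descents from $\{s,t\}$, both alternating paths terminate at the \emph{same} element $y$ with $\{s,t\}\cap\DesR(y)=\varnothing$ after the \emph{same} number $m_\theta(s,t)$ of steps. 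That is the case analysis (the alternatives $z=\Delta^*y\Delta$, $z=y\Delta=\Delta^*y$, and the mixed case) carried out in the proof of Proposition~\ref{hb-prop}. Until that is supplied, the proposal is an outline rather than a proof.
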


The relations prescribed by this theorem are usually highly redundant and  not easily determined from the Coxeter diagram of $(W,S)$.  
Our main  result gives a substitute for Theorem~\ref{suff-thm} without these defects for the cases when $W$ is a finite or affine Coxeter group.
Let $\Gamma$ be the Coxeter diagram of $(W,S)$.
Define an \emph{induced copy} of ${^2A_3}$ in 
$(W,S,*)$ to be an induced subgraph of $\Gamma$ of the form
\[
\barr{c} \athreediagram \\ A_3 \earr
\]
with $a^* =b $ and $b^* = a$ and $x^*=x$. 
Similarly, let an \emph{induced copy} of $B_3$ or $H_3$ or $D_4$ in $(W,S,*)$
refer to an induced subgraph of $\Gamma$
 of one of the respective forms 
\[
\barr{c} \bthreediagram \\ B_3 \earr
\qquord
\barr{c} \hthreediagram \\ H_3 \earr
\qquord
\barr{c} \dfourdiagram \\ D_4 \earr
\]
whose vertices are fixed pointwise by $*$. 
We define the \emph{exceptional braid relation} of each of these induced subgraphs to be the symmetric word relation given 
(somewhat arbitrarily) by:
\begin{itemize}
\item $(x,a,b,x,\text{ --- } ) \sim (x,a,x,b,\text{ --- }) $ in type $^2A_3$.
\item $(x,a,b,x,a,b,\text{ --- } ) \sim (a,b,x,a,b,x,\text{ --- } )$ in type $B_3$.
\item $(x,a,b,x,a,b,x,a,b,\text{ --- } )\sim (a,b,x,a,b,x,a,b,x,\text{ --- })$ in type $H_3$.
\item $(x,a,b,c,x,a,b,c,\text{ --- } )\sim (a,b,c,x,a,b,c,x,\text{ --- })$ in type $D_4$.
\end{itemize}
Let $\sB = \sB(W,S)$ be the set of braid relations for $(W,S)$,
let $\isB=\isB(W,S,*)$ be the union of $\sB$ with the set of half-braid relations for $(W,S,*)$,
and let $\isB^+=\isB^+(W,S,*)$ be the set of exceptional braid relations associated to
every induced copy of $^2A_3$, $B_3$, $H_3$, and $D_4$ in $(W,S,*)$.
The following is our main theorem.

\begin{theorem}\label{main-thm}
Let $(W,S,*)$ be a twisted Coxeter system and suppose
each irreducible component of $(W,S)$ is of finite or affine type. Then every set  $\iR_*(w)$ for $w \in \I_*$
is an equivalence class under the transitive relation generated by $ \isB \cup \isB^+$.
\end{theorem}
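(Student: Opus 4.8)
The plan is to reduce Theorem~\ref{main-thm} to a finite, diagrammatic verification by combining Theorem~\ref{suff-thm} with a careful analysis of which generalized half-braid relations \eqref{suff-eq} are actually needed. Theorem~\ref{suff-thm} tells us that $\iR_*(w)$ is spanned by relations of the form \eqref{suff-eq}, indexed by tuples $(r_1,\dots,r_k,s,t)$ such that both $(k+m)$-element prefixes $(r_1,\dots,r_k,s,t,s,\dots)$ and $(r_1,\dots,r_k,t,s,t,\dots)$ lie in a common $\iR_*(z)$. The first step is to show that each such relation is already a consequence of the transitive closure of $\isB \cup \isB^+$. Since the relation \eqref{suff-eq} ``happens'' at a definite point inside a word after a fixed prefix $r_1,\dots,r_k$, and the data $z \in \I_*$ together with that prefix determines a twisted involution $z' = (r_k^* \circ \cdots \circ r_1^* \circ r_1 \circ \cdots \circ r_k)$ in a parabolic situation, one wants to argue that the ``local'' behaviour of involution words near a swap $s,t,s,\dots \leftrightarrow t,s,t,\dots$ only depends on a bounded-rank subdiagram of $\Gamma$: concretely, on the elements $s$, $t$, their images $s^*$, $t^*$, and any generators linking these. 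This is the engine of the whole argument: localizing a generalized half-braid relation to an induced subgraph on at most four nodes (together with the $*$-action on it).

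The second step is the classification. Having localized, I would enumerate the rank-$\le 4$ twisted Coxeter subsystems $(W', S', *|_{S'})$ that can occur as induced sub-configurations, and for each one determine exactly which swaps $s,t,s,\dots \leftrightarrow t,s,t,\dots$ occur as prefixes of common involution words — this is governed by the quantity $m_*(s,t)$ from \eqref{theta-eq} and the case analysis already quoted in the introduction ($m$ equals $\tfrac12 m(s,t)$, $\tfrac12 m(s,t)+\tfrac12$, $\tfrac12 m(s,t)+1$, or $m(s,t)$). For configurations where the swap is an ordinary braid or a plain half-braid, the relation is in $\isB$ by definition and there is nothing to do. The remaining configurations — the ones where a genuinely new ``generalized'' relation with a nontrivial prefix $r_1,\dots,r_k$ is forced — should be precisely the induced copies of $^2A_3$, $B_3$, $H_3$, and $D_4$ listed before the theorem; for each, one checks by hand (finite computation, e.g.\ on the relevant finite or rank-$3$ affine Coxeter group) that the displayed exceptional braid relation, together with $\isB$, generates the full equivalence class on $\iR_*(z)$ for every $z$ in that subsystem. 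The reduction and affine/finite hypothesis enter here because the finiteness of rank ensures the list of offending configurations is finite and because one must know that no induced configuration outside this list contributes — this is where the restriction to components of finite or affine type is used, via the classification of such Coxeter diagrams.

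The third step is to assemble the local statements into a global one. Here I would invoke a ``matching/lifting'' argument in the spirit of Theorem~\ref{suff-thm}'s proof in \cite{HMP2}: given an arbitrary generalized half-braid relation \eqref{suff-eq} inside some $\iR_*(w)$, use the analysis of Section~\ref{rel-sect} (together with the exchange/lifting properties of involution words) to rewrite the prefix $r_1,\dots,r_k$ down to the point where the swap is visibly one of the local configurations above, apply the local result, and then reassemble. One must check that all these rewriting moves are themselves consequences of $\isB \cup \isB^+$ and that the induction (on $k$, or on $\ell$, or on $\hat\ell$) closes. Combining with Theorem~\ref{suff-thm} then yields that $\isB \cup \isB^+$ spans each $\iR_*(w)$; since all relations in $\isB \cup \isB^+$ preserve $\iR_*(w)$ (ordinary braids by associativity of $\circ$, half-braids by the discussion around \eqref{folded-eq}, and exceptional braids because each is a special case of a valid relation \eqref{suff-eq}), the transitive closure partitions each $\iR_*(w)$ into a single equivalence class, which is the theorem.

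The main obstacle I anticipate is the localization step: proving rigorously that a generalized half-braid relation depends only on a bounded-rank induced subdiagram, and that the set of ``essential'' subdiagrams is exactly $\{{}^2A_3, B_3, H_3, D_4\}$ with the prescribed $*$-actions. This requires understanding precisely how the prefix $r_1,\dots,r_k$ can be simplified without leaving the span of $\isB \cup \isB^+$ — equivalently, a Matsumoto-type theorem for the involution-word setting relative to the enlarged relation set — and then a somewhat delicate but finite case check on small twisted diagrams, including the rank-three affine cases $\tilde A_2$, $\tilde C_2$, $\tilde G_2$ (and their twists), which is presumably where the boundary of the ``perfectly braided'' classification in the abstract gets pinned down.
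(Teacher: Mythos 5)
Your high-level outline --- invoke Theorem~\ref{suff-thm}, cut the infinite family of generalized half-braid relations down to finitely many verifiable configurations, and check those against $\isB\cup\isB^+$ --- matches the paper's strategy in shape, but the central step of your argument (the ``localization'') is a genuine gap, and the bound you propose for it is not correct. You claim that a generalized half-braid relation \eqref{suff-eq} can be reduced, modulo $\isB\cup\isB^+$, to one supported on an induced subdiagram of at most four nodes containing $s,t,s^*,t^*$ and the generators linking them. But the spanning relations produced by Theorem~\ref{suff-thm} have prefixes $(r_1,\dots,r_k)$ of unbounded length whose support need not stay near $\{s,t\}$, and the paper's own data shows that the relations surviving minimization are not rank-four phenomena: Table~\ref{parabolic-table} lists, for instance, the necessary relation $(6,5,4,3,7,4,5,6,5,7,4,5,\text{ --- })\sim(6,5,4,3,7,4,5,6,5,7,5,4,\text{ --- })$ in type $^2D_7$, with a twelve-letter prefix supported on five nodes, and the covering families in Section~\ref{app-sect} require parabolic configurations of rank up to $16$ (type $^2A_{16}$). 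That all such relations are ultimately consequences of $\isB\cup\isB^+$ is the \emph{conclusion} of a long computation, not something obtainable by restricting attention to a four-node neighborhood of the swap; the list $\{{}^2A_3,B_3,H_3,D_4\}$ is an output, not an input.

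What is missing is the actual mechanism that makes the verification finite. The paper supplies this with the braid-system formalism of Section~\ref{alg-sect}: stripping descents from the prefix is encoded as a search over linear maps $\sigma=y|_{V_J}$ with symbolic coefficients; termination in the affine case is guaranteed by the descent-periodicity test of Algorithm~\ref{alg2}; the search is pruned by the redundancy criterion via Lemmas~\ref{relation-implication-lem}, \ref{3-lem}, and \ref{3+-lem}; and the finite-rank computations are transported to arbitrary rank by bounded embeddings (Theorem~\ref{bounded-thm}). Your ``matching/lifting'' step presupposes exactly this machinery without supplying it: the results of Section~\ref{rel-sect} let one replace the prefix by another reduced word for the same $y$ and peel off one descent at a time, but they do not by themselves show that this process closes up after finitely many configurations, nor that the surviving relations are implied by $\isB\cup\isB^+$ --- the latter is in the end verified by a substantial computer calculation (taking about twenty minutes in type $\tilde E_8$), not by a hand check on rank-three groups. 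As written, the proposal would need the localization claim either proved or replaced by an algorithmic reduction of the above kind before it constitutes a proof.
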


When $*=\id$ and $(W,S)$ has type $A_n$, $B_n$, $D_n$, or $F_4$, this statement
is equivalent to the main results of
 Hu, Wu, and Zhang in \cite{HuZhang1,HuZhang2,HuZhang3}.
Our proof  of the theorem is computational, and derives from a general algorithm
for reducing the set of generalized half-braid relations for $(W,S,*)$ to a 
finite superset of the braid relations $\sB$. This algorithm is described in Section~\ref{alg-sect}.

\begin{remark}
The set $\isB \cup \isB^+$ technically includes redundant relations when $(W,S,*)$ contains induced copies of 
$^2A_3$ or $D_4$, but one can
easily exclude these to obtain a minimal spanning set. 
\end{remark}

\begin{example}
Suppose $(W,S)$ is the affine Coxeter system of type $\tilde C_n$,
with $S = \{s_0,s_1,\dots,s_n\}$.
If $*$ is the involution with $s_i \leftrightarrow s_{n-i}$ for all $i$, then
$\isB^+ = \varnothing$ for $n\in \{2,3\}$ and all odd $n\geq 5$. For $n=4$ the Coxeter diagram 
$0 \overset{4}{=\hspace{-2mm}=} 1\text{ --- }2 \text{ --- }3 \overset{4}{=\hspace{-2mm}=} 4$ 
contains two induced copies of $^2A_3$
and $\isB^+$
consists of the relations $(2,1,3,2,\text{ --- })\sim (2,1,2,3,\text{ --- })$
and $(2,3,1,2,\text{ --- })\sim (2,3,2,1,\text{ --- })$.
\end{example}

A twisted Coxeter system $(W,S,*)$ is \emph{irreducible} if $*$ acts transitively on the connected 
components of the Coxeter diagram of $(W,S)$.
As a corollary, we get this generalization of Theorem~\ref{hz-thm}:

 \begin{corollary}\label{main-cor}
Suppose
$(W,S,*)$ is an irreducible twisted Coxeter system of affine or finite type. 
Then $(W,S,*)$ is perfectly braided if and only if 
either (i) $s^* \neq s$ for all $s \in S$,
(ii) $*=\id$ and $(W,S)$ has type $A_n$ or $\tilde A_n$,
or 
(iii) $(W,S)$ has type $\tilde A_2$, $\tilde C_2$, $\tilde G_2$, or $I_2(m)$ for $3 \leq m \leq \infty$.
 \end{corollary}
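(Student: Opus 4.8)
The plan is to deduce the corollary from Theorem~\ref{main-thm} together with two ingredients: (A) a combinatorial classification of the irreducible twisted Coxeter systems of finite or affine type for which $\isB^+ = \varnothing$, and (B) the fact that an induced copy of $^2A_3$, $B_3$, $H_3$, or $D_4$ always obstructs perfect braiding. Granting (A) and (B), the corollary follows at once: if $\isB^+(W,S,*) = \varnothing$ then the transitive closure of $\isB \cup \isB^+$ coincides with that of $\isB$, so Theorem~\ref{main-thm} says $(W,S,*)$ is perfectly braided; and if $\isB^+(W,S,*) \neq \varnothing$ then by definition $(W,S,*)$ contains an induced copy of one of the four diagrams, so by (B) it is not perfectly braided. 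Thus perfect braiding is equivalent to $\isB^+ = \varnothing$, and (A) identifies this condition with the trichotomy (i)/(ii)/(iii).

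For (B), the idea is a parabolic reduction. In each of the four cases the defining conditions force the induced subgraph $J$ to be $*$-stable, so $*$ restricts to an involution of the standard parabolic subgroup $W_J$ and $(W_J, J, *|_J)$ is a twisted Coxeter system. I would invoke the support property of involution words: for $w \in W_J$ every element of $\iR_*^W(w)$ has all letters in $J$, so $\iR_*^W(w) = \iR_*^{W_J}(w)$, and any $\isB(W,S,*)$-move between two such words uses a braid or half-braid relation all of whose letters lie in $J$, hence is an $\isB(W_J, J, *|_J)$-move. Consequently it suffices to check that each of the four \emph{finite} twisted Coxeter systems $^2A_3$, $B_3$, $H_3$, $D_4$ fails to be perfectly braided. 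That is a finite computation: for $^2A_3$ one can take $w$ to be the longest element of $S_4$, for which the words $(x,a,b,x,\dots)$ and $(x,a,x,b,\dots)$ lie in distinct $\isB$-classes of $\iR_*(w)$; the remaining three are handled by running the algorithm of Section~\ref{alg-sect} on each.

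For (A), the plan is a direct enumeration over the connected finite and affine Coxeter diagrams together with their diagram automorphisms. If the Coxeter diagram $\Gamma$ of $(W,S)$ is disconnected, irreducibility of $(W,S,*)$ forces $*$ to interchange two components, so $s^* \neq s$ for all $s$; since a copy of $B_3$, $H_3$, or $D_4$ requires a $*$-fixed vertex and a copy of $^2A_3$ requires the $*$-fixed middle vertex $x$, we have $\isB^+ = \varnothing$, landing in case (i). So assume $\Gamma$ is connected and $*$ is a possibly trivial diagram automorphism. If $*=\id$, the $^2A_3$ condition is vacuous, and one checks that $\Gamma$ contains no induced $B_3$, $H_3$, or $D_4$ precisely when $\Gamma$ is a path $A_n$, a cycle $\tilde A_n$, or one of the three-vertex diagrams $\tilde C_2$, $\tilde G_2$ or the two-vertex diagrams $I_2(m)$ — that is, cases (ii)/(iii) — because every other connected type has a label-$4$ or label-$5$ edge with an additional neighbor (giving a $B_3$ or $H_3$) or a vertex of degree $3$ (giving a $D_4$). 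If instead $*$ is nontrivial, then case (ii) is excluded and case (iii) can only occur for $\tilde A_2$, $\tilde C_2$, $I_2(m)$, whose nontrivial automorphisms yield no induced exceptional copy (the candidate three vertices either span a triangle, as for the reflection of $\tilde A_2$, or carry forbidden edge labels, as for the flip of $\tilde C_2$, or do not exist, as for $I_2(m)$); for every remaining connected type with a nontrivial automorphism $*$, one checks that either $*$ moves every vertex — landing in case (i), with no fixed vertex and hence no induced exceptional copy — or $*$ fixes a vertex $x$ whose two $*$-swapped neighbors $a$, $b=a^*$ are non-adjacent and joined to $x$ by unlabeled edges, producing an induced $^2A_3$. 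Collecting these outcomes gives exactly (i)/(ii)/(iii).

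The last paragraph is mostly bookkeeping over a short list of diagrams and their automorphisms, so the genuinely substantive steps are in (B): the support lemma that legitimizes the parabolic reduction from the (infinite) affine setting down to the four finite subsystems, and the base-case verification that $^2A_3$, $B_3$, $H_3$, and $D_4$ are not perfectly braided. I expect the latter to be the main obstacle, and the natural tool for it is the reduction algorithm of Section~\ref{alg-sect} applied to these four small systems.
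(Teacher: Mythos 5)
Your proposal is correct and follows the same outline as the paper's proof, which consists of the single sentence ``Combine Theorem~\ref{main-thm} with the classification of Coxeter graphs of positive type.'' The sufficiency direction (your ingredient (A) together with Theorem~\ref{main-thm}) is exactly what that sentence encodes. What you add is an explicit treatment of the necessity direction --- that an induced copy of $^2A_3$, $B_3$, $H_3$, or $D_4$ genuinely obstructs perfect braiding --- via the support property for $*$-stable parabolic subgroups quoted in Section~6 and a base-case check of the four finite systems; the paper leaves this direction implicit, relying on the computed minimality of its spanning sets (Theorem~\ref{algwork-thm} and Proposition~\ref{par-prop}). Your parabolic reduction is legitimate (braid and half-braid moves preserve the support of a word, and half-braid relations restrict correctly to $W_J$ when $J=J^*$), and your explicit $^2A_3$ witness --- the words $(x,a,b,x)$ and $(x,a,x,b)$ for the longest element, whose $\isB$-classes are $\{(x,a,b,x),(x,b,a,x)\}$ and a disjoint set --- checks out, so the only remaining work is the finite verification for $B_3$, $H_3$, $D_4$, which you correctly delegate to the machinery of Section~\ref{alg-sect}.
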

 
 \begin{proof}
Combine Theorem~\ref{main-thm} with the classification of Coxeter graphs of positive type in \cite{Humphreys}.
 \end{proof}
 
 Theorem~\ref{main-thm} and Corollary~\ref{main-cor} are nice enough to inspire some general conjectures.

 \begin{conjecture}
A twisted Coxeter system $(W,S,*)$ is perfectly braided if $s^*\neq s $ for all $s \in S$.
\end{conjecture}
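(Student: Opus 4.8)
The plan is to treat separately the case already within reach of Theorem~\ref{main-thm} and the genuinely open indefinite-type case. If every irreducible component of $(W,S)$ has finite or affine type, the conjecture is immediate: each of the four induced configurations $^2A_3$, $B_3$, $H_3$, $D_4$ that contributes a relation to $\isB^+$ requires at least one simple generator fixed by $*$ --- the central vertex $x$ in the $^2A_3$ pattern, and \emph{every} vertex in the $B_3$, $H_3$, and $D_4$ patterns --- so the hypothesis $s^*\neq s$ for all $s\in S$ forces $\isB^+=\varnothing$, and Theorem~\ref{main-thm} then asserts precisely that each $\iR_*(w)$ is a single equivalence class under $\isB$, which is the definition of being perfectly braided. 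The content of the conjecture is therefore the extension of this phenomenon to Coxeter systems with irreducible components of indefinite type.

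For the general case I would combine two ingredients. First, recall from \cite{HMP1,HMP2} that for $w\in\I_*$ the set $\iR_*(w)$ is the disjoint union of the sets $\cR(\alpha)$ over the atoms $\alpha$ of $w$; since Matsumoto's theorem already connects all of $\cR(\alpha)$ for each fixed $\alpha$ using only the ordinary braid relations --- and an ordinary braid move on an involution word preserves both its Demazure product and its reducedness, hence the atom --- the system $(W,S,*)$ is perfectly braided if and only if, for every $w$, the half-braid relations \eqref{folded-eq} suffice to pass between the reduced words of distinct atoms of $w$. Second, by Theorem~\ref{suff-thm} it is enough to show that every generalized half-braid relation \eqref{suff-eq} lies in the transitive closure of $\isB=\sB\cup\{\text{half-braid relations}\}$. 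Fixing such a relation with prefix $(r_1,\dots,r_k)$ and alternating tail of length $m$ on $\{s,t\}$, I would induct on $k$: when $k=0$ the requirement that both length-$m$ alternating words lie in a common $\iR_*(z)$ forces $m=m_*(s,t)$ as in \eqref{theta-eq}, so the relation is already a half-braid relation (if $m<m(s,t)$) or an ordinary braid relation (if $m=m(s,t)$); when $k\geq 1$ the goal is to apply braid and half-braid moves to $(r_1,\dots,r_k)$, keeping the whole word inside $\iR_*(z)$, so that the last prefix letter is absorbed into the tail or commuted past it, reducing to a shorter prefix.

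The step on which everything turns is to show, without appealing to the classification of positive-type Coxeter diagrams, that when $s^*\neq s$ for all $s\in S$ the reduction of the generalized half-braid relations never produces a relation outside $\isB$; equivalently, that in the fixed-point-free case any generalized half-braid relation \eqref{suff-eq} not already a consequence of $\isB$ would have to arise from an induced $^2A_3$, $B_3$, $H_3$, or $D_4$ pattern, and none of these occurs because each demands a $*$-fixed generator. In finite and affine type this is exactly the content of the case analysis behind Theorem~\ref{main-thm}, run on the (classified) list of irreducible diagrams; the obstacle in indefinite type is that no such finite list of subdiagrams is available to check. I expect the right way around it is a direct proof that the reduction algorithm of Section~\ref{alg-sect} terminates on any fixed-point-free $(W,S,*)$ --- for instance by bounding the ranks of the parabolic subgroups that must be examined, using the atom description of $\iR_*(w)$ from \cite{HMP1,HMP2} to control how the initial letters of an involution word may be altered. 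Establishing that termination, i.e. that the fixed-point-free hypothesis by itself rules out every exceptional relation, is the crux.
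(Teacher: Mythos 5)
This statement is labeled a \emph{conjecture} in the paper, and the paper offers no proof of it: the only established case is the one covered by Corollary~\ref{main-cor}, namely irreducible twisted Coxeter systems of finite or affine type. Your first paragraph is correct and matches what the paper actually knows --- under the hypothesis $s^*\neq s$ for all $s\in S$, none of the induced configurations $^2A_3$, $B_3$, $H_3$, $D_4$ can occur (each requires a $*$-fixed generator), so $\isB^+=\varnothing$ and Theorem~\ref{main-thm} gives perfect braiding whenever every irreducible component is of finite or affine type. But that is exactly the content of part (i) of Corollary~\ref{main-cor}, not of the conjecture itself.

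The remainder of your proposal is a plan, not a proof, and you say so yourself: the entire content of the conjecture beyond Corollary~\ref{main-cor} is the indefinite-type case, and your argument there reduces to showing that the reduction of generalized half-braid relations to $\isB$ never gets stuck --- equivalently, that the machinery of Section~\ref{alg-sect} (or some substitute for it) terminates and produces nothing outside $\isB$ in the fixed-point-free setting. You identify this as ``the crux'' and leave it unresolved. Two specific obstacles are worth naming. First, the descent-periodicity test in Algorithm~\ref{alg2}, which is what makes the paper's computations terminate, relies implicitly on $(W,S)$ being finite or affine (the paper says this explicitly in the remark after Algorithm~\ref{alg2}); there is no analogue available for indefinite type, so ``bounding the ranks of the parabolic subgroups that must be examined'' is not a routine adaptation but precisely the missing theorem. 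Second, even granting termination, one would still need to show that no \emph{new} exceptional relation (beyond the four listed patterns) can arise from an indefinite-type diagram with fixed-point-free $*$; the paper's verification of this in the finite/affine cases is an exhaustive computation over a classified list, and no classification-free argument is given or sketched. So the proposal establishes only what the paper already proves and leaves the conjecture itself open.
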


 \begin{conjecture}
There exists a finite subset $\sC$ of the generalized half-braid relations for $(W,S,*)$
 such that the relations $\isB \cup \sC$
span and preserve  $\iR_*(w)$  for each $w \in \I_*$.
  \end{conjecture}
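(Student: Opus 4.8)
The plan is to deduce Theorem~\ref{main-thm} from Theorem~\ref{suff-thm} together with a systematic reduction of the generalized half-braid relations to a finite set. The ``preservation'' half of the claim is routine: braid relations preserve each $\iR_*(w)$ by the associativity of $\circ$, and half-braid relations do so by construction (see \eqref{folded-eq}). More generally, if $(p_1,\dots,p_\ell,c_1,\dots,c_j)$ lies in $\iR_*(w)$ then, by associativity, $w=(c_j^*\circ\cdots\circ c_1^*)\circ z\circ(c_1\circ\cdots\circ c_j)$ where $z=p_\ell^*\circ\cdots\circ p_1^*\circ p_1\circ\cdots\circ p_\ell$; so any word move replacing the initial segment $(p_1,\dots,p_\ell)$ by a word of the same length with the same twisted involution $z$ sends $\iR_*(w)$ into itself. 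Every relation in $\isB\cup\isB^+$ has this shape, and for the four exceptional relations one verifies by a direct computation in the relevant rank-$3$ or rank-$4$ parabolic that the two exchanged segments have equal length and equal $z$. Thus $\isB\cup\isB^+$ preserves every $\iR_*(w)$, and what remains is the spanning statement: every instance of the generalized half-braid relation \eqref{suff-eq} lies in the transitive closure of $\isB\cup\isB^+$ on words.

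I would first reduce to an irreducible twisted Coxeter system of finite or affine type, hence of finite rank. Partition the components of $\Gamma$ into $*$-orbits; since twisted involutions and their involution words split as independent products over these orbits, it suffices to treat one orbit. If the orbit is a pair of components exchanged by $*$, then $\I_*$ is the set of graphs of diagram isomorphisms between them, involution words are ordinary reduced words in one of the two components, and Matsumoto's theorem already gives spanning by $\sB$ alone; moreover no induced copy of $^2A_3$, $B_3$, $H_3$ or $D_4$ meets such a pair, since each requires a $*$-fixed vertex, so $\isB^+$ contributes nothing. Hence we may assume the orbit is a single $*$-stable component, i.e.\ $(W,S,*)$ is irreducible of finite or affine type.

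Next I would run the reduction algorithm of Section~\ref{alg-sect} by induction on the rank. A generalized half-braid relation \eqref{suff-eq} involves only the generators in the support of the words it relates; let $J\subseteq S$ be the smallest $*$-stable set containing that support. If $J\subsetneq S$, then \eqref{suff-eq} is a generalized half-braid relation for the standard parabolic $(W_J,J,*|_J)$ --- again a disjoint union of finite or affine irreducibles of smaller rank --- so by induction it lies in the transitive closure of $\isB(W_J,J,*)\cup\isB^+(W_J,J,*)$, and both sets are contained in $\isB(W,S,*)\cup\isB^+(W,S,*)$ since an induced copy of $^2A_3$, $B_3$, $H_3$ or $D_4$ in $(W_J,J,*)$ is one in $(W,S,*)$. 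This leaves the finitely many irreducible finite and affine diagrams of each rank together with the generalized half-braid relations of \emph{full} support. For the latter I would use the decomposition $\iR_*(w)=\bigsqcup_v\cR(v)$ over the atoms $v$ of $w$ (cf.\ \cite{HMP1}): $\sB$ connects the ordinary reduced words within a single atom, so spanning becomes connectivity of the ``atom graph'' of $w$ under half-braid and exceptional moves, and the algorithm rewrites each full-support relation --- using $\isB$, and $\isB^+$ exactly where it is forced to --- into relations of strictly smaller support or strictly shorter prefix, until only a finite, rank-bounded family remains, which is checked by hand in low rank and by computer in ranks $3$ and $4$.

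The step I expect to be the main obstacle is carrying out this last reduction \emph{uniformly in the rank} for the infinite affine families $\tilde A_n$, $\tilde B_n$, $\tilde C_n$, $\tilde D_n$: one must show that every full-support generalized half-braid relation, after applying braid and half-braid moves, can be localized onto subdiagrams of bounded size, and that the only such local configurations in finite or affine type not already resolved by the ordinary and half-braid relations are the four subgraphs $^2A_3$, $B_3$, $H_3$ and $D_4$ --- the remaining finite and affine exceptional diagrams ($F_4$, $H_4$, $E_6$, $E_7$, $E_8$, and the exceptional affine types) contributing no new type, only further induced copies of these (e.g.\ a trivalent node yields $D_4$, a double bond at the end of a path yields $B_3$, and a $*$-fixed vertex whose two simply-laced neighbours are exchanged by $*$ yields $^2A_3$). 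This confronts the precise combinatorics of atoms in each Coxeter type, subsuming and extending the results of Hu, Wu, and Zhang for $A_n$, $B_n$, $D_n$ and $F_4$; I anticipate the technical heart to be the ``sliding'' lemma underlying the inductive step for the affine families.
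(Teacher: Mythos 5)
The statement you are proving is one of the paper's \emph{conjectures}, not a theorem: it asserts the existence of a finite spanning set $\sC$ for an \emph{arbitrary} twisted Coxeter system $(W,S,*)$, and the paper explicitly leaves it open, proving only the finite/affine case (Theorem~\ref{main-thm}). Your proposal restricts to finite or affine type in its very first reduction and never returns to the general case, so even if every step were carried out it would establish Theorem~\ref{main-thm} rather than the conjecture. The restriction is not cosmetic: every mechanism you invoke --- the classification of irreducible finite and affine diagrams by rank, the claim that only the four local configurations $^2A_3$, $B_3$, $H_3$, $D_4$ can obstruct spanning, the low-rank computer checks, and (in the paper's own machinery) the descent-periodicity termination criterion of Algorithm~\ref{alg2} --- depends on positive semidefiniteness of the Coxeter form and has no analogue in indefinite type, which is exactly why the paper states this as a conjecture.

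Even granting the finite/affine hypothesis, two further points. First, the step you yourself flag as ``the main obstacle'' --- showing uniformly in the rank that every full-support generalized half-braid relation can be localized onto a bounded subdiagram --- is the entire technical content of the result and is not supplied; the paper's substitute is the braid-system formalism of Section~\ref{alg-sect} together with parabolic configurations and bounded embeddings (Theorem~\ref{bounded-thm}), which is a genuinely different and much more concrete device than the ``atom graph connectivity'' you sketch. Second, your claim that for a pair of components interchanged by $*$ ``Matsumoto's theorem already gives spanning by $\sB$ alone'' is false: for $w = s\hs s^* \in \I_*$ with $s^*\neq s$ in the opposite component, both $(s)$ and $(s^*)$ are involution words for $w$, and no ordinary braid relation connects them --- one needs the length-one half-braid relations $(s,\text{ --- })\sim(s^*,\text{ --- })$, i.e.\ $\isB$ rather than $\sB$ (this is the content of Proposition~\ref{wxw-prop}, whose proof the paper leaves as an exercise but which is not a consequence of Matsumoto alone).
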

  
Call a subset $J \subset S$ a \emph{neighborhood} if 
$J = \{ t \in S : m(s,t) \neq 2\}$
for some $s \in S$.

\begin{conjecture}
Let $\sC$ be a subset of the generalized half-braid relations for $(W,S,*)$.
Suppose the relations $\isB \cup \sC$ span $\iR_*(w)$ for each neighborhood $J\subset S$ with $J=J^*$ and each
$w \in \I_* \cap W_J$.
Then the same relations span $\iR_*(w)$ for all $w \in \I_*$.
\end{conjecture}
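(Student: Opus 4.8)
The plan is to argue by induction on the involution length $\ellhat(w)$, running the Matsumoto--Tits template for ordinary reduced words but accounting for the one structural difference in the twisted setting: the half-braid relations \eqref{folded-eq} and the relations of $\sC$ (of the form \eqref{suff-eq}) carry a free ``\text{---}'' only on their \emph{right}, so they act at the front of a word. The useful consequence is that \emph{appending} a fixed common suffix to both sides of any relation in $\isB \cup \sC$ again yields a valid instance of the same relation; only \emph{prepending} a letter is problematic. So I would fix $\u,\v \in \iR_*(w)$, compare their first letters $a$ and $b$, and split into cases, using throughout that a prefix of an involution word is again an involution word for a shorter twisted involution.

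In the case $a=b$, write $\u=(a,\u')$ and $\v=(a,\v')$; then $\u',\v'$ lie in $\iR_*(w')$ for the shorter element $w'$ obtained from $w$ by stripping $a$ on the left through the twisted Demazure action, so by induction they are joined by a chain of relations from $\isB\cup\sC$. A two-sided braid relation \eqref{braid-eq} in that chain lifts verbatim after prepending $a$. For a front-acting relation in the chain, acting on some word $(\mathbf q,\mathbf r) \mapsto (\mathbf q',\mathbf r)$ with $\mathbf q,\mathbf q'$ the two alternating segments, the prepended words $(a,\mathbf q,\mathbf r)$ and $(a,\mathbf q',\mathbf r)$ have equal length-$(|\mathbf q|+1)$ prefixes $(a,\mathbf q)$, $(a,\mathbf q')$, which are involution words for one common element $z$ with $\ellhat(z) = |\mathbf q|+1 \le \ellhat(w)$. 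Whenever $\mathbf r \neq \varnothing$ this forces $\ellhat(z) < \ellhat(w)$, so the induction hypothesis connects $(a,\mathbf q)$ to $(a,\mathbf q')$ inside $\iR_*(z)$, and appending $\mathbf r$ to that chain is harmless by the observation above. This leaves only the residual sub-case where the front-acting relation spans \emph{all} of $\u'$, which I return to below.

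In the case $a \neq b$, both $a$ and $b$ lie in the left involution descent set of $w$, so the twisted analogue of Matsumoto's exchange lemma (as in \cite{HMP1,HMP2} and Hultman's work) yields $m_*(a,b)<\infty$ together with an involution word of $w$ of the form $(\mathbf q,\mathbf r)$ where $\mathbf q$ is the $m_*(a,b)$-term alternating word in $a,b$; moreover one may take either alternating orientation of $\mathbf q$. Here $\mathbf q$ is an involution word for the maximal twisted involution $z$ of the standard parabolic $W_K$, with $K := \supp(z)$ contained in the $\ast$-closure $\{a,b,a^*,b^*\}$ --- a parabolic of rank at most four. I would then check, by a short case analysis on how $\ast$ acts on $\{a,b\}$ and using $m(s,t)=m(s^*,t^*)$, that $K$ either collapses to a commuting pair (so $\u \sim \v$ by an ordinary braid move) or lies inside a $\ast$-stable neighbourhood --- e.g. when $a^*=a,\ b^*=b$ one has $K = \{a,b\} \subseteq \{t : m(a,t)\neq 2\}$. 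Granting this, the hypothesis gives that $\isB\cup\sC$ connects $\iR_*(z)$, and combining with the $a=b$ analysis for the tails (of strictly smaller length) and a single half-braid or braid relation of $\isB$ relating the two orientations of $\mathbf q$ gives $\u \sim (\mathbf q,\mathbf r)\sim(\mathbf q',\mathbf r)\sim \v$.

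The main obstacle is the residual sub-case of the $a=b$ step: the front-acting relation used on the stripped word covers the whole word, so that no suffix remains and one must instead argue that the parent element $w$ is supported, up to $\ast$-closure, inside a $\ast$-stable neighbourhood, whereupon the hypothesis applies directly. This is transparent when the offending relation is an ordinary half-braid --- its support $\{s,t\}$ forces $\supp(w) \subseteq \{c,s,t\}$, which sits in the star of $s$ --- but it is delicate when it is a longer relation of $\sC$, and here one genuinely has to exploit the ``locality'' of the reduction algorithm of Section~\ref{alg-sect}: that the relations it outputs (equivalently, the relations one ever needs to invoke) have support controlled by neighbourhood-sized configurations. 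In finite and affine type one can short-circuit this by bootstrapping off Theorem~\ref{main-thm}: there $\isB\cup\isB^+$ already spans every $\iR_*(w)$, and each relation of $\isB^+$ is supported on the star around the ($\ast$-fixed) centre of its induced copy of $^2A_3$, $B_3$, $H_3$, or $D_4$ --- hence on a $\ast$-stable neighbourhood --- so the hypothesis, applied to that neighbourhood and to the exceptional element, lets one re-derive each exceptional braid relation from $\isB\cup\sC$, and the general conclusion follows. Pushing the argument past finite and affine type, where no finite reduction of the generalized half-braid relations is currently available, is the part I expect to remain genuinely hard.
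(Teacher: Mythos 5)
The statement you are proving is posed in the paper as an open conjecture: the paper offers no proof of it, so there is no argument of record to compare yours against. Judged on its own terms, your proposal does not close the general case, and you say as much yourself. Two gaps are worth naming precisely. First, the residual sub-case of your $a=b$ step is circular: when the front-acting relation of $\sC$ used on the stripped word covers the entire word, you need to know that this relation (equivalently, the element it relates words of) is supported in a $*$-stable neighborhood --- but that is exactly the locality statement the conjecture is asserting, and nothing in the hypothesis gives it to you for an arbitrary subset $\sC$ of the generalized half-braid relations. Second, your $a\neq b$ case leans on a ``left exchange'' property for involution words that is not available in the paper and is not obviously true: the descent/exchange structure of involution words is right-handed (Proposition~\ref{hb-prop} and Corollary~\ref{twodescent-cor} concern \emph{last} letters, which are elements of $\DesR(w)$), whereas the first letters of elements of $\iR_*(w)$ are left descents of the various atoms $v$ with $(v^{-1})^*\circ v=w$, and different atoms have different left descent sets. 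Your claim that two distinct first letters $a,b$ force an involution word of $w$ beginning with the full $m_*(a,b)$-term alternating segment is a nontrivial join property of the weak order on $\I_*$ that would need its own proof.

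The one part of your proposal that does appear to go through is the bootstrap in finite and affine type: granting Theorem~\ref{main-thm}, each exceptional relation in $\isB^+$ relates two involution words of a common element supported in the ($*$-stable) neighborhood of the $*$-fixed center of its induced subgraph, so the hypothesis lets you replace that step by a chain of $\isB\cup\sC$ relations, and the free right-hand ``\text{---}'' in all relations of $\isB\cup\sC$ means the chain survives appending the common suffix (the intermediate words remain involution words because generalized half-braid relations preserve the sets $\iR_*(w)$). That yields the conjecture for finite and affine $(W,S,*)$, which is a genuine partial result but not the statement as posed; the general case remains open, exactly where you locate the difficulty.
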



We note one other result.
For $w \in \I_*$ define 
$ \iHR_*(w)$ as the union $\bigcup_{v} \cR(v)$ over $v \in W$ with $(v^{-1})^*\circ v =w$.
We refer to elements of $\iHR_*(w)$ as \emph{involution Hecke words}.
Such words were studied in \cite{HMP2};
in type $A_n$, they are closed related to
the so-called \emph{Chinese monoid}.
Clearly $\iHR_*(w)$ contains $\iR_*(w)$ and
 is preserved by the usual braid relations.
 When $s,t \in S$ and $n \in \PP$ are such that $m_*(s,t) \leq n <  m(s,t) < \infty$,
 define $\sim$ and $\approx$ as the symmetric word relations with
\be\label{mixed-eq}
 (\underbrace{t,s,t,\dots}_{n\text{ terms}},r_1,r_2,\dots,r_k)
 \sim (\underbrace{s,t,s,\dots}_{n\text{ terms}},r_1,r_2,\dots,r_k)
\approx (\underbrace{s,t,s,t,\dots}_{n+1\text{ terms}},r_1,r_2,\dots,r_k)
\ee
for each 
$(r_1,r_2,\dots,r_k) \in \cR(v)$ and $v \in W$ with  $\ell(sv) = \ell(tv) > \ell(v)$.
Let $\ihB = \ihB(W,S,*)$ be the union of the set of 
relations $\sim$ and $\approx$  arising in this way
 and the usual set of braid relations $\sB$ for $(W,S)$.
It follows from Theorem~\ref{suff-thm} that the relations $\ihB$ preserve the sets $\iHR_*(w)$.
The following is a consequence of Proposition~\ref{perfect-prop2}.
In type $A_n$, this statement is equivalent to \cite[Theorem 6.4]{HMP2}.

\begin{proposition}\label{perfect-prop}
If   $(W,S,*)$ is perfectly braided, then  $\ihB$ spans $\iHR_*(w)$ for each $w \in \I_*$.
\end{proposition}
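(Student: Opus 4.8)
The plan is to ``straighten out'' an arbitrary involution Hecke word into an involution word using only the relations $\approx$ from \eqref{mixed-eq}, and then to conclude inside $\iR_*(w)$ from the perfectly braided hypothesis. Recall that $\iHR_*(w)$ is the disjoint union of the reduced-word sets $\cR(v)$ over the fiber $F(w) := \{v\in W : (v^{-1})^*\circ v = w\}$, and that $\iR_*(w)$ is exactly the union of the $\cR(v)$ over the minimal-length $v\in F(w)$; write $\ell_*(w)$ for their common length. My first observation is that, restricted to $\iR_*(w)$, the half-braid relations are already instances of the relation $\sim$ from \eqref{mixed-eq}: if $(\underbrace{s,t,s,\dots}_{m}, r_1,\dots,r_k)\in\iR_*(w)$ with $m = m_*(s,t) < m(s,t)$, then this word is reduced, so $(r_1,\dots,r_k)\in\cR(v)$ for $v := r_1\circ\cdots\circ r_k$, and for $m\ge 2$ both $(s,r_1,\dots,r_k)$ and $(t,r_1,\dots,r_k)$ are subwords of it and hence reduced, forcing $\ell(sv) = \ell(tv) = \ell(v)+1$ --- exactly the condition needed to apply \eqref{mixed-eq} with $n = m$. (The degenerate case $m = 1$, in which $t = s^*$ commutes with $s$, is checked directly.) Since also $\sB\subseteq\ihB$, the equivalence relation that $\isB$ generates on $\iR_*(w)$ is contained in the one generated by $\ihB$; as $(W,S,*)$ is perfectly braided, $\iR_*(w)$ is thus contained in a single $\ihB$-class. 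Because $\ihB$ preserves $\iHR_*(w)$, it therefore suffices to prove that every word in $\iHR_*(w)$ is $\ihB$-equivalent to one of length $\ell_*(w)$.

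I would prove this by induction on the length $k$ of a word $a = (a_1,\dots,a_k)\in\iHR_*(w)$, the case $k = \ell_*(w)$ being immediate. When $k > \ell_*(w)$ the element $v := a_1\circ\cdots\circ a_k$ is not minimal in $F(w)$, and the crucial claim is that $v$ then has a reduced word of the form
\[
(\underbrace{s,t,s,t,\dots}_{n+1},\ r_1,\dots,r_j) \qquad\text{with } s\neq t \text{ and } m_*(s,t)\le n < m(s,t) < \infty .
\]
Granting this, the braid relations $\sB$ (Matsumoto) carry $a$ to such a word; its suffix $(r_1,\dots,r_j)$ is reduced, and $(s,r_1,\dots,r_j)$ and $(t,r_1,\dots,r_j)$ are subwords of it and hence reduced, so $(r_1,\dots,r_j)\in\cR(v')$ for some $v'$ with $\ell(sv') = \ell(tv') = \ell(v')+1$. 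Thus the relation $\approx$ of \eqref{mixed-eq} applies and replaces $a$ by the word $(\underbrace{s,t,s,\dots}_{n},\ r_1,\dots,r_j)\in\iHR_*(w)$ of length $k-1$, to which the inductive hypothesis applies. Combined with the previous paragraph, this shows $\iHR_*(w)$ is a single $\ihB$-class.

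The crucial claim is the heart of the matter and the step I expect to be hardest. It is a purely structural statement about the fiber $F(w)$, involving no spanning relations, and should follow from the recursive description of the minimal-length elements of $F(w)$: a reduced word for $v\in F(w)$ fails to be an involution word precisely when $v$ has a reduced word beginning with an alternating string $s,t,s,\dots$ of more than $m_*(s,t)$ letters for some $s\neq t$ with $m_*(s,t) < m(s,t)$, because collapsing such a prefix is the only operation that strictly shortens $v$ while keeping it in $F(w)$. Proving this requires the analysis of the possible values of $m_*(s,t)$ from Section~\ref{rel-sect} --- in particular distinguishing the cases $s^* = s$ and $s^*\neq s$, which controls how the middle factor $s^*\circ s$ behaves in $(v^{-1})^*\circ v$ --- together with an induction on $\ell(v) - \ell_*(w)$. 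This is the content I expect Proposition~\ref{perfect-prop2} to isolate; once it is available, the argument above goes through.
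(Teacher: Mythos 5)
Your reduction in the first two paragraphs is sound in outline, but the entire difficulty of the proposition is concentrated in your ``crucial claim,'' and you have not proved it --- you only assert that ``collapsing such a prefix is the only operation that strictly shortens $v$ while keeping it in $F(w)$,'' which is essentially a restatement of what needs to be shown. This is also not what Proposition~\ref{perfect-prop2} isolates: the paper never establishes the global structural fact that every non-minimal $v$ in the fiber admits a reduced word with an alternating prefix of length $>m_*(s,t)$. Instead, it inducts on the word length by appending the last letter $s_k$ to the chain of $\ihB$-relations already obtained (by induction) for the length-$(k-1)$ prefix, steers that chain to an involution word ending in $s_k$, locates the first step $\a_{i-1}\sim_i\a_i$ at which appending $s_k$ breaks reducedness, observes that $\sim_i$ must then be a relation of type \eqref{mixed-eq} (braid relations preserve reducedness of the extension), and uses the exchange condition on $\Delta v$ versus $v$ to show that $\a_{i-1}(s_k)$ is braid-equivalent to a word with a longer alternating prefix, to which $\approx$ applies. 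The ``long alternating prefix'' is thus produced by the inductive hypothesis together with a local exchange argument, not read off from $v$ directly. To salvage your route you would have to actually prove the crucial claim, and I see no shorter path to it than redoing an induction of the paper's kind.

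There is also a recurring incorrect step: you twice argue that $(s,r_1,\dots,r_j)$ and $(t,r_1,\dots,r_j)$ are ``subwords of a reduced word and hence reduced.'' Non-contiguous subwords of reduced words need not be reduced, and in fact $\ell(\Delta_{n+1}v')=\ell(\Delta_{n+1})+\ell(v')$ with $n+1<m(s,t)$ does not by itself force $\ell(sv')>\ell(v')$: take $m(s,t)=4$ and $v'=su$ with $u$ minimal in $W_{\{s,t\}}v'$, so that $(s,t,s,\dots)$ concatenated with a reduced word of $v'$ can be reduced while $sv'<v'$. The conclusion you want is still true in the settings where you invoke it, but the correct argument goes through the left coset decomposition $v=pu$ with $p\in W_{\{s,t\}}$ and $u$ of minimal length in $W_{\{s,t\}}v$: one checks that an atom's $\{s,t\}$-component has length at most $m_*(s,t)$ (otherwise $(p^{-1})^*\circ p$ is already non-minimal in $W_{\{s,t\}}$ and $v$ cannot be an atom), so the suffix of an involution word beginning with $m_*(s,t)$ alternating letters is forced to be the minimal coset representative. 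You should also build the condition $\ell(sv')=\ell(tv')>\ell(v')$ directly into the statement of your crucial claim by requiring the suffix to be a minimal coset representative, since otherwise the relation $\approx$ of \eqref{mixed-eq} need not apply to the word you produce.
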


%

In Section~\ref{prelim-sect} we review a few standard facts about Coxeter systems.
Section~\ref{rel-sect} describes some more detailed properties of generalized half-braid relations. 
Section~\ref{alg-sect} defines our main algorithms,
and Sections~\ref{red-sect} and \ref{app-sect} 
 indicate how we use their output to verify Theorem~\ref{main-thm}.

\subsection*{Acknowledgements}

I thank
Zach Hamaker, Luca Moci, Brendan Pawlowski, and Yan Zhang for helpful conversations.

\section{Preliminaries}\label{prelim-sect}

We write $\ZZ$, $\NN$, and $\PP$ for the sets of all, nonnegative, and positive integers, and define $[n] = \{1,2,\dots,n\}$ for $n \in \NN$.
Let $(W,S,*)$ be a twisted Coxeter system. 
Our main reference for the following material is \cite[Chapter 5]{Humphreys}.

Define $V$ as the vector space over $\RR$ with a basis given by
 $\alpha_s$ for $s \in S$.
 Write $(\cdot,\cdot)$ for the symmetric, bilinear form on $V$ with 
$(\alpha_s,\alpha_t) = -\cos\(\tfrac{\pi}{m(s,t)}\)$ for $s,t \in S$,
 where $m(s,t)$ is the order of $st \in W$.
 The formula $s  v = v  -2(\alpha_s,v)\alpha_s$ for $s \in S$ and
 $v\in V$ extends to a faithful action of $W$ on $V$ which preserves $(\cdot,\cdot)$.
We refer to $V$ with this $W$-module structure as the \emph{geometric representation} of $(W,S)$.
The \emph{root system} of $(W,S)$ is the set
$\Phi = \{ w\alpha_s : w \in W,\ s\in S\}.$
 This set is the disjoint union $\Phi = \Phi^+ \sqcup \Phi^-$
where $\Phi^+ = \Phi \cap \RR^+ \spanning\{ \alpha_s : s \in S\}$
and $\Phi^- = - \Phi^+$.
We write $<$ for the Bruhat order on $W$.
Recall that if $w \in W$ and $s \in S$ then the following are equivalent:
(1) $ws < w$, (2) $sw^{-1} < w^{-1}$, (3) $\ell(ws) = \ell(w)-1$, and (4) $w\alpha_s \in \Phi^-$.
Let $\DesR(w) = \{s \in S : ws<w\}$ and $\DesL(w) = \{s \in S : sw<w\}$.

\begin{lemma}\label{commute-lem}
If $w \in W$ and $s,t \in S$ then $ws=tw$ if and only if $w\alpha_s \in \left\{ \alpha_{t},-\alpha_{t}\right\}$.
\end{lemma}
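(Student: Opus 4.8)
The plan is to work entirely inside the geometric representation $V$, exploiting three facts recalled in Section~\ref{prelim-sect}: the action of $W$ on $V$ is faithful; it preserves the bilinear form $(\cdot,\cdot)$; and each $s\in S$ acts as the reflection $v\mapsto v-2(\alpha_s,v)\alpha_s$. Two preliminary observations do all the real work. First, since $m(s,s)=1$ we have $(\alpha_s,\alpha_s)=-\cos(\pi)=1$, and $W$-invariance of the form then forces $(w\alpha_s,w\alpha_s)=1$ for every root $w\alpha_s$. Second, because $(\alpha_s,\alpha_s)\neq 0$, the $(-1)$-eigenspace of the operator $s$ on $V$ is exactly the line $\RR\alpha_s$.

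For the ``if'' direction, suppose $w\alpha_s=\epsilon\alpha_t$ with $\epsilon\in\{1,-1\}$. Then for any $v\in V$, using $W$-invariance in the form $(\alpha_s,w^{-1}v)=(w\alpha_s,v)$, I compute
\[
wsw^{-1}v \;=\; w\bigl(w^{-1}v-2(\alpha_s,w^{-1}v)\alpha_s\bigr)\;=\;v-2(w\alpha_s,v)\,w\alpha_s\;=\;v-2(\alpha_t,v)\alpha_t\;=\;tv,
\]
since $\epsilon^2=1$. As the action is faithful, $wsw^{-1}=t$, i.e. $ws=tw$.

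For the ``only if'' direction, suppose $ws=tw$, equivalently $wsw^{-1}=t$. Applying both sides to $\alpha_t$ and using $t\alpha_t=-\alpha_t$ gives $wsw^{-1}\alpha_t=-\alpha_t$; conjugating by $w^{-1}$ yields $s(w^{-1}\alpha_t)=-(w^{-1}\alpha_t)$. Hence $w^{-1}\alpha_t$ lies in the $(-1)$-eigenspace $\RR\alpha_s$ of $s$, so $w^{-1}\alpha_t=c\alpha_s$ for some scalar $c$. Comparing norms and using that $W$ preserves $(\cdot,\cdot)$ gives $1=(\alpha_t,\alpha_t)=c^2(\alpha_s,\alpha_s)=c^2$, so $c=\pm1$ and therefore $w\alpha_s=c^{-1}\alpha_t\in\{\alpha_t,-\alpha_t\}$.

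I do not expect a genuine obstacle here; the only delicate points are the norm normalization $(\alpha_s,\alpha_s)=1$ and the one-dimensionality of the $(-1)$-eigenspace of $s$, both of which follow directly from the definition of the form. (One could alternatively argue via the conjugation identity $w s_\beta w^{-1}=s_{w\beta}$ for reflections attached to roots, together with the fact that two such reflections agree exactly when their roots are proportional, but the elementary computation above avoids introducing that notation.)
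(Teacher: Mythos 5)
Your proof is correct, and it is essentially the argument the paper delegates to its citation of Humphreys (Proposition 5.6 and Lemma 5.7): the identity $wsw^{-1}=s_{w\alpha_s}$ together with the facts that roots are unit vectors for the invariant form and that the $(-1)$-eigenspace of a reflection is the line spanned by its root. You have simply written out self-contained verifications of those standard facts rather than quoting them, and every step checks out.
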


\begin{proof}
This is a straightforward consequence of \cite[Proposition 5.6 and Lemma 5.7]{Humphreys}.
\end{proof}

\begin{corollary}\label{commute-cor}
If $w \in W$ and $s \in S$ then $ws=s^*w < w$ if and only if $w\alpha_s = -\alpha_{s^*}$.
\end{corollary}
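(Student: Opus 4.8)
The plan is to deduce this directly from Lemma~\ref{commute-lem} together with the list of equivalent conditions for $ws<w$ recalled just before that lemma. First I would specialize Lemma~\ref{commute-lem} to the case $t = s^*$: it then says that $ws = s^*w$ holds if and only if $w\alpha_s \in \{\alpha_{s^*}, -\alpha_{s^*}\}$. So the only remaining point is to determine which of these two possibilities is compatible with the strict inequality in $ws = s^*w < w$.

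For the forward direction, assume $ws = s^*w < w$. Lemma~\ref{commute-lem} gives $w\alpha_s \in \{\alpha_{s^*}, -\alpha_{s^*}\}$, while the equivalence of $ws<w$ with $w\alpha_s \in \Phi^-$ forces $w\alpha_s$ to be a negative root. Since $\alpha_{s^*}$, being a simple root, lies in $\Phi^+$, this rules out $w\alpha_s = \alpha_{s^*}$ and leaves $w\alpha_s = -\alpha_{s^*}$. For the reverse direction, assume $w\alpha_s = -\alpha_{s^*}$. Then in particular $w\alpha_s \in \{\alpha_{s^*}, -\alpha_{s^*}\}$, so Lemma~\ref{commute-lem} yields $ws = s^*w$; and since $-\alpha_{s^*} \in \Phi^-$, the same equivalence $(1)\Leftrightarrow(4)$ gives $ws < w$. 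Combining the two directions proves the corollary.

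There is no genuine obstacle here; this is essentially an immediate corollary. The only things to be careful about are reading ``$ws = s^*w < w$'' as the conjunction of $ws = s^*w$ and $ws < w$ (these coincide once the identity $ws = s^*w$ is known), and recalling that every simple root $\alpha_{s^*}$ belongs to $\Phi^+$ so that exactly one of $\pm\alpha_{s^*}$ is negative.
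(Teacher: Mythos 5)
Your proof is correct and is exactly the argument the paper intends: the corollary is stated without proof as an immediate consequence of Lemma~\ref{commute-lem} (specialized to $t=s^*$) combined with the standard equivalence $ws<w \Leftrightarrow w\alpha_s\in\Phi^-$, and the sign is pinned down just as you describe.
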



Recall that $\I_* =\I_*(W)= \{ w \in W : w ^{-1} = w^*\}$.

\begin{lemma}\label{x-lem}
If $w \in \I_*$ and $s \in S$ then  $\ell(s^*ws)  =\ell(w)$ if and only if $s^*ws=w$.
\end{lemma}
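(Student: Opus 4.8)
The plan is to analyze the two directions of the equivalence separately, using the geometric representation and the characterization of descents in terms of roots. Throughout, recall that $w \in \I_*$ means $w^* = w^{-1}$.

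\textbf{The easy direction.} Suppose $s^*ws = w$. Then of course $\ell(s^*ws) = \ell(w)$. So the content is entirely in the forward direction.

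\textbf{The main direction.} Suppose $\ell(s^*ws) = \ell(w)$; I want to deduce $s^*ws = w$. The standard fact (from \cite[Chapter 5]{Humphreys}) is that for any $v \in W$ and $s,t \in S$, multiplying on the left by $t$ and on the right by $s$ changes length by $-2$, $0$, or $+2$, and the length is unchanged (i.e.\ $\ell(tvs) = \ell(v)$ with $tvs \neq v$ can happen, but) more precisely: either $tvs = v$, or $\ell(tvs) = \ell(v) \pm 2$, or $\ell(tvs) = \ell(v)$ with $tvs \ne v$ — the last case is exactly when $tv > v$ and $vs < v$ (or the symmetric situation). I would apply this with $t = s^*$ and $v = w$. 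If $s^*ws = w$ we are done, and the $\pm 2$ cases are excluded by hypothesis, so the remaining case to rule out is $\ell(s^*ws) = \ell(w)$ with $s^*ws \neq w$, which forces (WLOG) $s^*w > w$ and $ws < w$, i.e.\ $s^* \notin \DesL(w)$ but $s \in \DesR(w)$. Now use $w \in \I_*$: the condition $s \in \DesR(w)$ means $ws < w$, equivalently $sw^{-1} < w^{-1}$, equivalently $s \in \DesL(w^{-1}) = \DesL(w^*)$. But $s \in \DesL(w^*)$ is equivalent to $s^* \in \DesL(w)$ (applying $*$, which is a length-preserving automorphism fixing $S$ setwise). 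This contradicts $s^* \notin \DesL(w)$. The symmetric case ($s^*w < w$ and $ws > w$) is ruled out the same way. Hence $s^*ws = w$.

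\textbf{Where the care is needed.} The one subtlety is pinning down the precise trichotomy for $\ell(s^*ws)$ versus $\ell(w)$ — the ``length changes by $0$ or $\pm 2$, and in the $0$ case either equality holds or the descent sets behave oppositely'' statement. This is classical but I would either cite it directly from \cite{Humphreys} or derive it quickly from Lemma~\ref{commute-lem} and Corollary~\ref{commute-cor}: writing things in terms of the root $w\alpha_s$, one has $w\alpha_s \in \Phi^+$ or $\Phi^-$ according to whether $s \notin \DesR(w)$ or $s \in \DesR(w)$, and comparing $\ell(s^*(ws))$ with $\ell(ws)$ amounts to asking whether $s^* \in \DesL(ws)$. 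A short case check using that $*$ commutes with taking inverses for elements of $\I_*$ gives the claim. I expect this bookkeeping with descent sets to be the only real (minor) obstacle; the argument is otherwise immediate once one invokes $w^* = w^{-1}$.
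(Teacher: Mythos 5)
Your proof is correct and takes essentially the same route as the paper, which simply cites the exchange condition (via \cite[Lemma 3.4]{H2}); your argument is the standard fleshing-out of that appeal, combining the length-parity trichotomy for $v \mapsto tvs$ with the observation that $w^* = w^{-1}$ forces $s \in \DesR(w)$ and $s^* \in \DesL(w)$ to occur together.
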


\begin{proof}
This follows from the exchange condition; cf. \cite[Lemma 3.4]{H2}.
\end{proof}

\begin{corollary}\label{notcommute-lem}
If $w \in \I_*$ and $s \in S$ then $s^*ws < w$ if and only if $w\alpha_s \in \Phi^- \setminus\{-\alpha_{s^*}\}$.
\end{corollary}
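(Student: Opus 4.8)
The plan is to reduce the statement to the length dichotomy for twisted involutions in Lemma~\ref{x-lem}, together with the standard translation between right descents and root signs recalled just before Lemma~\ref{commute-lem} (namely $ws<w\iff w\alpha_s\in\Phi^-$) and with Corollary~\ref{commute-cor}. I would argue by cases on the sign of $w\alpha_s$.

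First, suppose $w\alpha_s\in\Phi^+$. Then $ws>w$, so $\ell(ws)=\ell(w)+1$; since left multiplication by $s^*$ changes length by exactly one, $\ell(s^*ws)\in\{\ell(w),\ell(w)+2\}$, and in particular $s^*ws\not<w$. Meanwhile the right-hand condition $w\alpha_s\in\Phi^-\setminus\{-\alpha_{s^*}\}$ also fails, since $w\alpha_s\notin\Phi^-$, so both sides of the claimed equivalence are false and this case is done.

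Next, suppose $w\alpha_s\in\Phi^-$. Then $ws<w$ and $\ell(ws)=\ell(w)-1$, so $\ell(s^*ws)\in\{\ell(w)-2,\ell(w)\}$, and hence $s^*ws<w$ if and only if $\ell(s^*ws)\neq\ell(w)$, which by Lemma~\ref{x-lem} holds if and only if $s^*ws\neq w$. The remaining task is to recognize when $s^*ws=w$: this is equivalent to $s^*w=ws$, and because $ws<w$ in this case, Corollary~\ref{commute-cor} identifies the condition $ws=s^*w<w$ with $w\alpha_s=-\alpha_{s^*}$. (One implication of the corollary gives $ws=s^*w$ from $w\alpha_s=-\alpha_{s^*}$, whence $s^*ws=w$; the other gives $w\alpha_s=-\alpha_{s^*}$ once we know both $s^*w=ws$ and $ws<w$.) Therefore, in this case $s^*ws<w$ if and only if $w\alpha_s\neq-\alpha_{s^*}$, which together with $w\alpha_s\in\Phi^-$ is exactly the right-hand side.

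I do not expect a serious obstacle. The one point that wants care is checking that in the second case $\ell(s^*ws)$ takes only the values $\ell(w)-2$ or $\ell(w)$ — immediate, since multiplying by a single generator changes the length by $\pm1$ — and then correctly invoking Lemma~\ref{x-lem} to pass from $\ell(s^*ws)=\ell(w)$ to $s^*ws=w$; everything else is bookkeeping between the two cases.
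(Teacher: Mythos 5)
Your proof is correct and follows essentially the same route as the paper: the paper's one-line argument observes that $s^*ws<w$ if and only if $s^*w\neq ws<w$ (which rests on Lemma~\ref{x-lem} and the length parity considerations you spell out) and then invokes Corollary~\ref{commute-cor} to translate the condition $ws=s^*w<w$ into $w\alpha_s=-\alpha_{s^*}$. Your version is just a more explicit case analysis of the same reduction.
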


\begin{proof}
Since $s^*ws < w$ if and only if $s^* w \neq ws < w$, the result follows from Corollary~\ref{commute-cor}.
\end{proof}

Suppose $w \in \I_*$ and $s \in \DesR(w)$.
Corollaries~\ref{commute-cor} and \ref{notcommute-lem}
 imply that a unique element $v \in \I_*$
 exists with $v < s^* \circ v \circ s = w$. 
It follows by induction that the set $\iR_*(w)$ is nonempty, and moreover
that $w \in \I_*$ has an involution word ending in $s$ whenever $s \in \DesR(w)$.
We write
$\ellhat_* : \I_* \to \NN$
for the function which assigns to $w \in \I_*$ the common length of each element of $\iR_*(w)$.

\section{Relations}\label{rel-sect}

Let $(W,S,*)$ be a twisted Coxeter system, and
recall the definition of the generalized half-braid relations \eqref{suff-eq}.
Fix  elements $s,t \in S$ and suppose $\theta$ is a map $\{s,t\}\to W$.
Define
\be\label{theta-eq}
m_\theta(s,t) = \begin{cases} 
\frac{1}{2} m(s,t)+\frac{1}{2}&\text{if $m(s,t)$ is odd and $\theta(\{s,t\}) = \{s,t\}$} \\
\frac{1}{2}m(s,t) + 1&\text{if $m(s,t)$ is even and $\theta(s)=s$ and $\theta(t)=t$} \\
\frac{1}{2}m(s,t)&\text{if $m(s,t)$ is even and $\theta(s)=t$ and $\theta(t)=s$}\\
m(s,t)&\text{otherwise}.
\end{cases}
\ee
If  $m(s,t) = \infty$ then $m_\theta(s,t) = \infty$.
If $m_\theta(s,t)<m(s,t)$, then $\theta$ extends to an involution of  $W_{\{s,t\}} = \langle s,t\rangle$
and $m_\theta(s,t) = \ellhat_\theta(\Delta)$ for the longest element $\Delta \in \I_\theta(W_{\{s,t\}})$ \cite[Proposition 7.7]{HMP2}.

\begin{proposition}\label{hb-prop}
There exists $z \in \I_*$ such that $\iR_*(z)$ contains both of the words 
\be\label{words}  (r_1,r_2,\dots,r_k, \underbrace{\dots,s,t,s,t,s}_{m\text{ terms}}) \qquand (r_1,r_2,\dots,r_k,\underbrace{\dots,t,s,t,s,t}_{m\text{ terms}})
\ee
if and only if there exists $y \in \I_*$ such that $(r_1,r_2,\dots,r_k) \in \iR_*(y)$,  $\{s,t\} \subset S - \DesR(y)$, and $m = m_\theta(s,t) < \infty$
where $ \theta : W \to W$ denotes the map $w\mapsto (ywy^{-1})^*$.
\end{proposition}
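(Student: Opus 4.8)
The plan is to prove the two directions separately, pivoting in both cases on the observation that the two words in \eqref{words} share the common prefix $(r_1,\dots,r_k)$, so the analysis reduces to understanding what the ``tails'' $(\dots,s,t,s,t,s)$ and $(\dots,t,s,t,s,t)$ of length $m$ do when appended to a fixed element $y \in \I_*$.

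First I would set up the forward direction. Suppose such a $z$ exists. Since $(r_1,\dots,r_k)$ is an initial segment of an element of $\iR_*(z)$, the discussion at the end of Section~\ref{prelim-sect} (the inductive construction of $\iR_*$ via Corollaries~\ref{commute-cor} and~\ref{notcommute-lem}) shows there is a unique $y \in \I_*$ with $(r_1,\dots,r_k) \in \iR_*(y)$ and $y \leq z$ in the appropriate sense; moreover $z$ is obtained from $y$ by the ``twisted conjugation'' action of the tail. Because $\iR_*(z)$ also contains a word with the same prefix followed by an $(s,t,\dots)$-alternating tail and one with a $(t,s,\dots)$-alternating tail, both $s$ and $t$ must be available to extend $y$, i.e. $\{s,t\}\subset S-\DesR(y)$; here I would invoke Lemma~\ref{x-lem} and Corollary~\ref{notcommute-lem} to rule out the degenerate possibility that appending $s$ or $t$ fixes $y$. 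The element $z$ is then the image of $y$ under the length-$m$ alternating twisted-conjugation by $s$ and $t$, and this forces a relation inside the rank-two parabolic: letting $\theta:w\mapsto (ywy^{-1})^*$, the tail is exactly an involution word in the twisted dihedral group $(W_{\{s,t\}}, \{s,t\}, \theta)$ for the element one gets by acting on the identity. Two alternating involution words of the same length $m$ existing there is, by \cite[Proposition 7.7]{HMP2} and the definition \eqref{theta-eq}, precisely the condition $m = m_\theta(s,t) < \infty$. The main technical point to get right is that $\theta$ indeed restricts to an involution of $W_{\{s,t\}}$ — this is where the hypothesis $m_\theta(s,t) < m(s,t)$ from the paragraph before the proposition does real work, and I would cite \cite[Proposition 7.7]{HMP2} for it.

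For the converse, suppose $y \in \I_*$, $(r_1,\dots,r_k)\in\iR_*(y)$, $\{s,t\}\subset S-\DesR(y)$, and $m = m_\theta(s,t)<\infty$. I would explicitly build $z$ by twisted conjugation: define a sequence $y = y_0, y_1, y_2, \dots, y_m$ where $y_{2i}$ alternates, $y_{j}$ is obtained from $y_{j-1}$ by conjugating by $s$ or $t$ according to the alternating pattern, using at each step Corollary~\ref{notcommute-lem} to verify the length actually increases (this is where one checks $s,t \notin \DesR$ of the intermediate elements — again reducible to the rank-two computation because the relevant roots stay inside the plane spanned by $\alpha_s,\alpha_t$, as in Lemma~\ref{commute-lem}). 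Set $z = y_m$ obtained via the $(s,t,s,\dots)$ pattern; the condition $m = m_\theta(s,t)$ guarantees that running the $(t,s,t,\dots)$ pattern of the same length $m$ lands on the same $z$, by the dihedral identification and \cite[Proposition 7.7]{HMP2}. Concatenating $(r_1,\dots,r_k)$ with each of these two tails then gives two words in $\iR_*(z)$; minimality of length is automatic since $\ellhat_*(z) = k + m$ by construction.

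The step I expect to be the main obstacle is the reduction of everything to the rank-two parabolic $W_{\{s,t\}}$ — specifically, justifying that the twisted conjugation by $s$ and $t$ starting from $y$ behaves exactly like involution-word growth in the twisted dihedral system, including that the map $\theta = (y(\cdot)y^{-1})^*$ is the correct twist and is an honest involution there. Everything else is bookkeeping with the criteria from Section~\ref{prelim-sect}; isolating the dihedral computation cleanly and then citing \cite[Proposition 7.7]{HMP2} and the case analysis in \eqref{theta-eq} is the crux, and I would lean on Lemma~\ref{commute-lem} and Corollary~\ref{notcommute-lem} to confine the relevant roots to $\RR\alpha_s + \RR\alpha_t$ so that the reduction is rigorous.
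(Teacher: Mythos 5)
Your overall strategy---reduce everything to the rank-two parabolic $W_{\{s,t\}}$ and compare the two alternating tails there---is the same as the paper's, but as written your plan has a gap in the cases where $\theta$ does not preserve $\{s,t\}$. Both of your directions hang on identifying the tails with involution words in a twisted dihedral system $(W_{\{s,t\}},\{s,t\},\theta)$ via \cite[Proposition 7.7]{HMP2}, and you correctly observe that this requires $\theta$ to restrict to an involution of $W_{\{s,t\}}$, which is guaranteed only when $m_\theta(s,t)<m(s,t)$. But the proposition's hypothesis is merely $m=m_\theta(s,t)<\infty$, and the ``otherwise'' branch of \eqref{theta-eq} includes the situations $\theta(\{s,t\})\cap\{s,t\}=\varnothing$ and, worse, $|\theta(\{s,t\})\cap\{s,t\}|=1$, where there is no twisted dihedral subsystem to appeal to and your plan offers nothing in its place. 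The mixed case is the genuinely delicate one: writing $\Delta$ for the longest element of $W_{\{s,t\}}$, the paper shows (using $\ell(y\Delta)=\ell(y)+\ell(\Delta)$ when $\{s,t\}\cap\DesR(y)=\varnothing$ together with the exchange condition) that exactly one of three things happens---$z=\Delta^*y\Delta$ with $\ell(z)=\ell(y)+2\ell(\Delta)$, or $z=y\Delta=\Delta^*y$ with $\ell(z)=\ell(y)+\ell(\Delta)$, or $\ell(z)=\ell(y)+2\ell(\Delta)-1$ with exactly one step of the alternating twisted conjugation of the form $vu=u'^*v$. Your step-by-step construction must detect which case it is in and verify that $\hat\ell_*$ increases at every one of the $m$ steps in each case; Corollary~\ref{notcommute-lem} supplies the criterion to be checked but not the check itself, and \cite[Proposition 7.7]{HMP2} does not apply in the last two configurations.

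Relatedly, your stated justification that ``the relevant roots stay inside the plane spanned by $\alpha_s,\alpha_t$'' is not what is actually needed: the roots you must control are $y_j\alpha_s$ and $y_j\alpha_t$, where $y_j$ is a twisted conjugate of $y$ by a prefix of the alternating word, and these leave $V_{\{s,t\}}$ as soon as $y\neq 1$. What confines the computation to rank two is the factorization $y_j=(w')^*\hs y\hs w$ with $w,w'\in W_{\{s,t\}}$ combined with the length additivity above, not containment of roots in $V_{\{s,t\}}$. I would replace the dihedral-system citation with the trichotomy just described (proved from the exchange condition, as in the paper); once that is in place, your forward direction, your converse construction, and your closing uniqueness step (both tails of length $m_\theta(s,t)$ land on the same $z'$, and $z=z'$ because $\{s,t\}\subset\DesR(z)\cap\DesR(z')$) all go through and coincide with the paper's argument.
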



\begin{proof}
Let $w \in W$. 
When $m(s,t) <\infty$, define $\Delta$ as the longest element of $W_{\{s,t\}} $.
If $\{s,t\} \subset \DesR(w)$, then $m(s,t)<\infty$
and $\ell(w\Delta) = \ell(w) - \ell(\Delta)$  \cite[Lemma 1.2.1]{GP},
while if $\{s,t\}\cap \DesR(w)=\varnothing$ and $m(s,t) <\infty$, then $\ell(w\Delta) = \ell(w)  + \ell(\Delta)$.
Similar left-handed properties hold.

Suppose $m(s,t) <\infty$ and $y \in \I_*$ is such that 
 $\{s,t\} \cap \DesR(y) = \varnothing$, so that
$y' = \Delta^* y$  has length $\ell(y') =\ell(y) + \ell(\Delta)$.
The exchange condition
implies that if
$s \in \DesR(y')$ then $ys \in \{s^*y,t^*y\}$
while if $t \in \DesR(y')$ then $yt \in \{s^*y,t^*y\}$.
Writing $s'=(ysy^{-1})^*$ and $t'=(ysy^{-1})^*$,
we deduce that there exists a unique element $z \in \I_*$ such that one of the following occurs:
\ben
\item[(1)] $z = \Delta^* y \Delta$, $\ell(z) = \ell(y)+2\ell(\Delta)$, and $\{s',t'\}\cap \{s, t\} = \varnothing$.
\item[(2)] $z = y \Delta = \Delta^* y$, $\ell(z) = \ell(y) + \ell(\Delta)$, and $\{s',t'\}=\{s, t\}$.
\item[(3)] $z \in \{ \Delta^* ys\Delta, \Delta^* yt\Delta \} \cap \{ \Delta^* s^* y\Delta, \Delta^* t^* y\Delta\}$,
$\ell(z) = \ell(y) + 2\ell(\Delta)-1$, 
 and $|\{s',t'\}\cap \{s, t\}| = 1$.
\een
Define $\theta(w)= (ywy^{-1})^*$.
In each case, one checks that
 if $(r_1,r_2,\dots,r_k)\in \iR_*(y)$ then both words in \eqref{words} belong to $\iR_*(z)$
 when $m=m_\theta(s,t)$.

Assume conversely that both words in \eqref{words} belong to $\iR_*(z)$ for $z \in \I_*$.
Then $(r_1,r_2,\dots,r_k) \in \iR_*(y)$ for some $y \in \I_*$. We must have $\{s,t\} \cap \DesR(y) = \varnothing$
and $m(s,t) <\infty$ since $m>0$ and  $\{s,t\} \subset \DesR(z)$.
Again define $\theta(w) = (ywy^{-1})^*$.
The argument above shows that 
the $(k+m_\theta(s,t))$-element words $(r_1,r_2,\dots,r_k,\dots,s,t,s)$ and $(r_1,r_2,\dots,r_k,\dots,t,s,t)$ both belong to 
$\iR_*(z')$ for some $z' \in \I_*$.
Since $\{s,t\}\subset \DesR(z)\cap \DesR(z')$, we must have $z=z'$ and $m=m_\theta(s,t)$. 
\end{proof}

In the course of the preceding proof, we established the following:

\begin{corollary}\label{twodescent-cor}
Let $s,t \in S$ and $z \in \I_*$. If $\{s,t\}\subset \DesR(z)$,
then there is a unique positive integer $m\leq m(s,t)<\infty$ such that 
the words in \eqref{words}
both belong to $\iR_*(z)$ 
for some $r_1,r_2,\dots,r_k \in S$.
\end{corollary}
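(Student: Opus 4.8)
The plan is to extract Corollary~\ref{twodescent-cor} directly from the proof of Proposition~\ref{hb-prop} rather than re-proving anything from scratch. Given $s,t \in S$ and $z \in \I_*$ with $\{s,t\} \subset \DesR(z)$, the first step is to produce the element $y \in \I_*$: since $\{s,t\} \subset \DesR(z)$ forces $m(s,t) < \infty$ (otherwise $\DesR(z)$ could not contain both), the longest element $\Delta$ of $W_{\{s,t\}}$ exists, and one peels it off on the right in the involution sense. Concretely, iterate the fact recalled after Corollary~\ref{notcommute-lem}: whenever $u \in \I_*$ has a right descent $r$, there is a unique $v \in \I_*$ with $v < r^* \circ v \circ r = u$. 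Applying this $\ell(\Delta)$-many times along a reduced word for $\Delta$ (all of whose letters remain right descents at each stage, by the descent-propagation lemma \cite[Lemma 1.2.1]{GP} used in the proof of Proposition~\ref{hb-prop}) yields a unique $y \in \I_*$ with $\{s,t\} \cap \DesR(y) = \varnothing$ and with $(r_1,\dots,r_k) \in \iR_*(y)$ for an appropriate choice of prefix letters, so that appending either alternating string of length $\ellhat_\theta(\Delta)$ recovers an involution word for $z$.

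The second step is to set $\theta(w) = (ywy^{-1})^*$ and invoke Proposition~\ref{hb-prop} itself: with this $y$ and $\theta$, the proposition's "only if'' direction gives that the words in \eqref{words} with $m = m_\theta(s,t)$ both lie in $\iR_*(z)$, where $m_\theta(s,t) \le m(s,t) < \infty$ by the casework in \eqref{theta-eq}. This establishes existence of the integer $m$. For uniqueness, suppose $m'$ is another positive integer for which the length-$m'$ alternating words (with some prefix $r_1',\dots,r_{k'}'$) both lie in $\iR_*(z)$. Applying the "only if'' direction of Proposition~\ref{hb-prop} to this pair produces $y' \in \I_*$ with $(r_1',\dots,r_{k'}') \in \iR_*(y')$, $\{s,t\} \cap \DesR(y') = \varnothing$, and $m' = m_{\theta'}(s,t)$ where $\theta'(w) = (y'wy'^{-1})^*$. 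The final step is to show $y = y'$, whence $\theta = \theta'$ and $m = m'$. This is exactly the argument appearing at the very end of the proof of Proposition~\ref{hb-prop}: both $y$ and $y'$ are obtained from $z$ by removing $\Delta$ on the right in the involution sense, and that removal is unique --- equivalently, $z' := s^* \circ \cdots$ built from either word equals $z$, and the unique-$v$ statement following Corollary~\ref{notcommute-lem} forces $y = y'$.

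In short, the corollary is essentially a repackaging: existence is the forward implication of Proposition~\ref{hb-prop} applied to the canonical $y$, and uniqueness is the rigidity already proved there (the map sending an involution word for $z$ to the element $y$ obtained by stripping off $\Delta$ does not depend on the word). I would phrase the whole argument in two or three sentences, citing the relevant pieces of the preceding proof. The only point requiring a word of care --- and the likeliest source of a gap if one is sloppy --- is verifying that $m_\theta(s,t)$ genuinely does not exceed $m(s,t)$ and that the prefix length $k$ is consistent between the two candidate decompositions; but both follow immediately, the first from inspecting the four cases of \eqref{theta-eq} and the second from $\ellhat_*(z) = k + m$ together with $m = m_\theta(s,t)$ being determined by $\theta$, which is determined by $y$, which is determined by $z$. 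So there is no real obstacle here beyond bookkeeping.
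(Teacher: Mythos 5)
Your proposal is correct and follows the paper's route exactly: the paper gives no separate argument, simply observing that both existence (the forward construction of the unique $z$ with $\{s,t\}\subset\DesR(z)$ from the element $y$ with $\{s,t\}\cap\DesR(y)=\varnothing$) and uniqueness (the deterministic stripping of descents in $\{s,t\}$, which forces $m=m_\theta(s,t)$) were already established in the course of proving Proposition~\ref{hb-prop}. One minor imprecision worth fixing: the involution-theoretic stripping from $z$ down to $y$ takes $m_\theta(s,t)\leq \ell(\Delta)$ steps rather than $\ell(\Delta)$ steps ``along a reduced word for $\Delta$'' --- that count and \cite[Lemma 1.2.1]{GP} pertain to ordinary reduced words, not involution words --- but this does not affect your argument, since you never actually use the step count and instead read off $m=m_\theta(s,t)$ from Proposition~\ref{hb-prop}.
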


Recall the definition of $\iHR_*(w)$ and $\isB$ and $\ihB$
from Theorem~\ref{main-thm} and Proposition~\ref{perfect-prop}.

\begin{proposition}\label{perfect-prop2}
Suppose $(W,S,*)$ is perfectly braided.
Let $z \in \I_*$ and $\s \in \iHR_*(z)$.
There exists a sequence
 $\sim_1$, $\sim_2$, \dots, $\sim_l$
of
 relations in $\ihB$
and
a sequence 
$\s_0$, $\s_1$, \dots, $\s_l \in \iHR_*(z)$
of
 reduced words with weakly decreasing lengths
such that $\s = \s_0 \sim_1 \s_1 \sim_2 \cdots \sim_l \s_l \in \iR_*(z)$.
\end{proposition}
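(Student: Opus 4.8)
The plan is to induct on $\ell(\s)$, the length of the Hecke word $\s = (u_1, u_2, \dots, u_N) \in \iHR_*(z)$. If $\s \in \iR_*(z)$ already (equivalently, $\ell(\s) = \ellhat_*(z)$), there is nothing to do. Otherwise I want to exhibit a single relation in $\ihB$ — one of the $\sim$ or $\approx$ relations in \eqref{mixed-eq} — that carries $\s$ to a word $\s'$ of strictly smaller length with $\s' \in \iHR_*(z)$, and then apply perfect braidedness (which is about $\iR_*$, not $\iHR_*$) to bridge between consecutive "levels". The subtle point is that \eqref{mixed-eq} is only one relation: it replaces a prefix $(s,t,s,t,\dots)$ of $n+1$ terms by $(s,t,s,\dots)$ of $n$ terms (lowering length by one), provided what follows is a reduced word $(r_1,\dots,r_k) \in \cR(v)$ with $\ell(sv)=\ell(tv)>\ell(v)$ and $m_*(s,t) \le n < m(s,t)$. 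So the first task is: given a non-reduced Hecke word, use the ordinary braid relations $\sB \subset \ihB$ to bring it into a form where such a length-reducing $\approx$ applies.

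The key structural input is the description, via $(v^{-1})^*\circ v = z$, of $\iHR_*(z)$ as $\bigcup_v \cR(v)$. If $\s \in \cR(v)$ and $(v^{-1})^*\circ v = z$ but $\ell(v) > \ellhat_*(z)$, then $v$ is not one of the minimal-length elements mapping to $z$; one shows (this is essentially the content of the tower construction behind $\iR_*$, cf. Corollaries~\ref{commute-cor} and \ref{notcommute-lem} and the exchange argument in the proof of Proposition~\ref{hb-prop}) that there is some $s \in \DesR(v)$ with $s^* v s = v$, i.e. $sv = s^*v$ and $\ell(s^*vs) = \ell(v)$; equivalently $v\alpha_s \in \{\alpha_{s^*}, -\alpha_{s^*}\}$ by Lemma~\ref{commute-lem}. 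Let me take such an $s$ and let $s' = s^*$. If $s = s'$, then $v$ has a reduced word ending in $s$ and $v s = v$ is impossible for $\ell$ reasons — rather, the relevant case is $v\alpha_s = -\alpha_{s'}$ with $s \ne s'$, so that $m(s,s') < \infty$ and one can use ordinary braid moves to rewrite $\s$ so that its tail realizes a long alternating string in $s, s'$; the obstruction string $(s,s',s,\dots)$ of length $m(s,s')$ is exactly where a braid relation turning it into $(s',s,s',\dots)$ composed with a cancellation $s'\circ s' = s'$ would lower length, and \eqref{mixed-eq} is precisely the $\approx$-relation that legitimizes this when $m_*(s,s') \le n < m(s,s')$. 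Carrying out this rewriting — bringing the alternating string to the front, or more precisely to a position where the complementary factor is a reduced word $(r_1,\dots,r_k) \in \cR(v_0)$ with $\ell(sv_0)=\ell(s'v_0)>\ell(v_0)$ — is the technical heart, and this is where I expect the main difficulty: the ordinary braid relations permit "commuting past" only in restricted ways, so one has to argue (again via the geometric representation and the root $v\alpha_s = -\alpha_{s'}$) that the required reduced-word manipulation stays inside $\cR(v)$ and lands in the shape demanded by \eqref{mixed-eq}.

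Granting this, after one application of an $\approx$-relation (preceded by finitely many $\sB$-moves, all of which preserve length and keep us inside $\iHR_*(z)$ since $\iHR_*(z)$ is a union of $\cR(v)$'s each closed under braid moves) we obtain $\s' \in \iHR_*(z)$ with $\ell(\s') = \ell(\s) - 1$. Iterating, we reach a word $\t \in \iR_*(z)$. To conclude with the precise statement — a single chain $\s = \s_0 \sim_1 \cdots \sim_l \s_l$ of $\ihB$-relations with weakly decreasing lengths ending in $\iR_*(z)$ — I concatenate: the $\sB$-moves and the one $\approx$ per level give the steps strictly inside one length level followed by a drop, and within the final length level (where everything lies in $\iR_*(z)$) perfect braidedness supplies a chain of half-braid and ordinary braid relations (all in $\isB \subset \ihB$) connecting $\t$ to any prescribed target in $\iR_*(z)$; but in fact we only need to reach \emph{some} element of $\iR_*(z)$, so that last invocation is only needed to match whatever normalization the statement wants, and the monotonicity of lengths is automatic because every relation used either preserves length ($\sB$ and half-braid moves, and the $\sim$ part of \eqref{mixed-eq}) or strictly decreases it (the $\approx$ part). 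The one place to be careful is that the $\sim$-relation in \eqref{mixed-eq} swaps two equal-length words, so inserting it never violates weak monotonicity; and the $\approx$-relation is always applied in the length-decreasing direction, which is consistent with the claim. I would organize the writeup as: (1) reduce to the existence of one length-decreasing $\ihB$-step from any non-reduced $\s \in \iHR_*(z)$; (2) prove that step by the braid-rewriting argument above; (3) assemble the chain and check weak monotonicity.
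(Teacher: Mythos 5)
Your overall plan---peel off length one unit at a time by locating an alternating string, braiding it to the front, and applying an $\approx$-relation from \eqref{mixed-eq}---is not the paper's route, and it has a genuine gap at exactly the point you flag as ``the technical heart.'' Worse, the structural claim you use as the entry point is false. You assert that if $\s \in \cR(v)$ with $(v^{-1})^*\circ v = z$ and $\ell(v) > \ellhat_*(z)$, then some $s \in \DesR(v)$ satisfies $s^*vs = v$, equivalently $v\alpha_s \in \{\alpha_{s^*},-\alpha_{s^*}\}$. Take $W=S_3$, $*=\id$, $v = s_1s_2s_1 = w_0$: then $(v^{-1})^*\circ v = w_0\circ w_0 = w_0 = z$ and $\ell(v)=3 > 2 = \ellhat_*(z)$, but $s_1vs_1 = s_2$ and $s_2vs_2 = s_1$, so no such $s$ exists ($v\alpha_{s_1} = -\alpha_{s_2}$, not $\pm\alpha_{s_1}$). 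Your subsequent identification ``let $s'=s^*$'' then points you at an alternating string in $s,s^*$, whereas the reduction that actually works here is the $\approx$-relation on the full string $(s_1,s_2,s_1)$ with $\{s,t\}=\{s_1,s_2\}$ and $s^*=s\neq t$. Beyond this, even with a correct pair $(s,t)$ in hand, you still must produce a braid-equivalent word of the exact shape demanded by \eqref{mixed-eq}: an alternating \emph{prefix} of length $n+1$ with $n \geq m_*(s,t)$, followed by a reduced word for some $v_0$ with $\ell(sv_0)=\ell(tv_0)>\ell(v_0)$. Nothing in your sketch guarantees the constraint $n\geq m_*(s,t)$, which is where the twisted-involution structure (and the hypothesis of perfect braidedness) must enter; deferring this is deferring essentially the whole proposition.

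For comparison, the paper's proof inducts on the number of letters, peeling off the \emph{last} letter $s_k$: the prefix lies in $\iHR_*(y)$ for some $y$, the inductive chain $\a_0\sim_1\cdots\sim_l\a_l\in\iR_*(y)$ is transported by appending $(s_k)$ to every term, and perfect braidedness is used (via $\isB\subset\ihB$) to arrange that $\a_l$ ends in $s_k$ when $y=z$. Reducedness of $\a_i(s_k)$ can then fail for the first time only at a step $\sim_i$ that is itself a relation from \eqref{mixed-eq}, so $\a_{i-1}$ already has the required alternating prefix of length $n\geq m_*(s,t)$; the exchange condition shows that braid moves absorb the trailing $s_k$ into that prefix, extending it to length $n+1$, after which a single $\approx$-relation returns to $\a_{i-1}$ and the chain proceeds. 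This localization is what your front-to-back reduction strategy lacks. If you want to salvage your approach, you would need to prove, independently, that every non-minimal Hecke word in $\iHR_*(z)$ is $\sB$-equivalent to one in the precise form of the right-hand side of \eqref{mixed-eq} with $n\geq m_*(s,t)$; that statement is comparable in difficulty to the proposition itself.
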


In the following proof, we write $\a\b$ for the concatenation of two words $\a$ and $\b$.

\begin{proof}
Let $\s = (s_1,s_2,\dots,s_k) \in \iHR_*(z)$.
Assume $k>0$
and $(s_1,s_2,\dots,s_{k-1}) \in \iHR_*(y)$ for $y \in \I_*$.
By induction there are relations $\sim_1$, $\sim_2$, \dots, $\sim_l$ in $\ihB$ and words
$\a_0$, $\a_1$, \dots, $\a_l \in \iHR_*(y)$ with weakly decreasing lengths
such that $(s_1,s_2,\dots,s_{k-1}) = \a_0 \sim_1 \a_1 \sim_2 \cdots \sim_l \a_l \in \iR_*(y)$.
Let $\b = (s_k)$.
Note that 
 if $\a_{i-1} \b$ and $\a_{i}\b$ are both reduced words, then we also have 
$\a_{i-1}\b \sim_i \a_{i}\b$. 

If $y<s_k^* \circ y \circ s_k = z$ then 
each $\a_i\b$ is reduced and $\a_l \b \in \iR_*(z)$, so 
the result holds with $\s_i = \a_i \b$.
Suppose $y = z$ so that $s_k \in \DesR(z)$.
Since   the relations $\ihB$ span $\iR_*(z)$, 
we may assume that $\a_l \in \iR_*(z)$ has $s_k$ as its last entry.
Let $i \in [l]$ be minimal
such that $\a_{i-1}\b$ is reduced but  $\a_i \b$ is not reduced.
The relation $\sim_i$ cannot belong to $\sB$,
so
 $\a_{i-1}$ must be an $(n+j)$-element word
 $
\a_{i-1} = (s,t,s,\dots,r_1,\dots,r_j)
$,
where $s,t \in S$ and  $n \leq m(s,t)< \infty$ and  $(r_1,\dots,r_j) \in \cR(v)$ for
some $v \in W$ with $\ell(sv) = \ell(tv) > \ell(v)$,
and $\a_i$ must be the $(n+j)$ element word
$
 (t,s,t,\dots,r_1,r_2,\dots,r_j)
$ 
or the $(n+j-1)$-element word
$(s,t,s,\dots,r_1,\dots,r_j)$.
Let $\Delta$ be the longest element in $W_{\{s,t\}}$.
Then $\ell(\Delta v) = \ell(\Delta) + \ell(v)$ and $s_k $ belongs to $ \DesR(\Delta v)$ but not $\DesR(v)$,
so the exchange principle implies that $\Delta v s_k \in \{ \Delta s v, \Delta tv \} = \{ s\Delta v, t\Delta v\}$
and $vs_k \in \{sv, tv\}$.
It follows that  a sequence of braid relations transforms  $\a_{i-1}\b$ to the $(n+j+1)$-element word
$
(s,t,s,t,\dots,r_1,r_2,\dots,r_j).
$
A relation 
 in $ \ihB$ transforms this word to $\a_{i-1}$,
so 
the desired property follows.
\end{proof}

\section{Algorithms}
\label{alg-sect}

The goal of this section is to describe an algorithm which can be used to verify Theorem~\ref{main-thm}
by a computer calculation.
Choose a twisted Coxeter system $(W,S,*)$ and write  $\I_* = \I_*(W)$.
We assume  $S$ is finite, and fix an arbitrary total ordering of its elements.
  Everything in this section will be defined relative to these choices, though this dependence is often suppressed in our notation.
A \emph{word} is a finite sequence $(s_1,s_2,\dots,s_n)$ with $s_i \in S$. An \emph{involution word} is a word in $\iR_*(w)$ for some $w\in \I_*$.
As in the introduction, we often write
$
(s_1,s_2,\dots,s_n,\text{ --- }) \sim (t_1,t_2,\dots,t_n,\text{ --- })
$
as a shorthand for the symmetric relation on words with  $\a \sim \b$
if   there exist any number of elements $c_i \in S$ such that
$\a=(s_1,s_2,\dots,s_n,c_1,c_2,\dots,c_m)$ and $\b=(t_1,t_2,\dots,t_n,c_1,c_2,\dots,c_m)$.

For each $p,q \in \PP$ with $q\geq 2$, let $\sA_{p,q}= \sA_{p,q}(W,S,*)$ 
be the subset of generalized half-braid relations \eqref{suff-eq}
for $(W,S,*)$ with $p=k+m$ and $q= m(s,t)$.
Define 
\be\label{gen-rel-eq}\sA=\sA(W,S,*) = \bigcup_{p \in \PP}\bigcup_{2\leq q<\infty} \sA_{p,q}\ee
and also let
$\sAq{q} = \bigcup_{i \in \PP} \bigcup_{q \leq j <\infty} \sA_{i,j}$ and $ \sAp{p} = \bigcup_{i \in [p]}\bigcup_{2\leq j <\infty} \sA_{i,j}.$
By Theorem~\ref{suff-thm}, the relations in $\sA$ span and preserve 
 $\iR_*(w)$ for all $w \in \I_*$.
Suppose $\sim$ is a word relation and $\sX \subset \sA \cup \sB$. We write $\sX \Rightarrow {\sim}$ if there are relations $\sim_1$, $\sim_2$, \dots, $\sim_n$ in $\sX$ 
such that whenever two involution words $\a$, $\b$ satisfy  $\a \sim \b$, there are
involution words $\a_0$, $\a_1$, \dots, $\a_n$ with $\a=\a_0 \sim_1 \a_1 \sim_2\dots \sim_n \a_n = \b$.
If $\sY \subset \sA \cup \sB$ then we write $\sX \Rightarrow \sY$ when $\sX \Rightarrow {\sim}$ for all ${\sim} \in \sY$.

\begin{lemma}\label{relation-implication-lem}
Let $\r=(r_1,\dots,r_k)$, $\s=(s_1,\dots,s_n)$, and $\t=(t_1,\dots,t_n)$ be words with $n>0$. 
Set $p=k+n$ and $q=m(s_n,t_n)$. Define $\cC$ and $\cD$ as the equivalence classes of $\r\s$ and $\r\t$
under the transitive relation generated by $\sAp{p-1} \cup \sB$. 
Suppose both
(a) some $z \in \I_*$ exists with $\{ \r\s,\r\t\} \subset \iR_*(z)$,
and 
(b) some $\u \in \cC$ and $\v \in \cD$
exist with right-most entries $s $ and $t$ such that $s=t$ or
$q<m(s,t)<\infty$.
It then holds that $\sAp{p-1}\cup \sAq{q+1}\cup \sB \Rightarrow {\sim}$
for the word relation ${\sim}$ defined by
$(r_1,\dots,r_k,s_1,\dots,s_n,\text{ --- }) \sim (r_1,\dots,r_k,t_1,\dots,t_n,\text{ --- })$.
\end{lemma}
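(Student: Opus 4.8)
The strategy is to build one fixed chain of relations drawn from $\sAp{p-1}\cup\sAq{q+1}\cup\sB$ carrying $\r\s$ to $\r\t$ through words that all lie in $\iR_*(z)$, and then to observe that appending an arbitrary common word $\c$ to every term of this chain gives a valid chain built from \emph{literally the same relations}, whose terms are all involution words as soon as its endpoints are. Two elementary facts about involution words make this work. First, every relation in $\sA\cup\sB$ is length-preserving and has the shape ``fixed (possibly empty) initial segment, short block, trailing wildcard ``---''''; hence appending a common word $\c$ to both sides of such a relation yields the same relation (with a longer wildcard), in particular with unchanged parameters. Second, a prefix of an involution word is again an involution word (for a uniquely determined element of $\I_*$), and whether a word $\w\c$ is an involution word depends only on the $\I_*$-element determined by $\w$ and on $\c$, since $y\mapsto s^*\circ y\circ s$ is well defined on $\I_*$. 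Granting these: if $\w_0\sim_1\cdots\sim_N\w_N$ is a chain of length-preserving relations with every $\w_i\in\iR_*(z)$, then $\w_0\c\sim_1\cdots\sim_N\w_N\c$ is a valid chain and, whenever $\w_0\c$ is an involution word, so is every $\w_i\c$. This reduces the lemma to constructing a chain $\r\s\to\r\t$ entirely inside $\iR_*(z)$ using relations from $\sAp{p-1}\cup\sAq{q+1}\cup\sB$.

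A further preliminary I would record: if $y\in\I_*$ and $\ellhat_*(y)=N$, then $\iR_*(y)$ is a single equivalence class under the transitive relation generated by $\sAp{N}\cup\sB$. Indeed Theorem~\ref{suff-thm} says $\sA\cup\sB$ spans $\iR_*(y)$; these relations preserve length, and a relation in $\sA_{i,j}$ with $i>N$ cannot act on a word of length $N$ (its fixed plus alternating segments already have length $i$), so only relations from $\sAp{N}\cup\sB$ can occur in a chain between elements of $\iR_*(y)$.

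Now assemble the chain as $\r\s\to\u\to\v\to\r\t$. The first and last legs are given by hypothesis: $\u$ lies in the class $\cC$ of $\r\s$ and $\v$ in the class $\cD$ of $\r\t$, both under $\sAp{p-1}\cup\sB$; since these relations preserve $\iR_*$ and $\r\s,\r\t\in\iR_*(z)$, both legs stay inside $\iR_*(z)$. It remains to connect $\u$ to $\v$; let $s,t$ denote their last letters. If $s=t$, then $\u=\u's$ and $\v=\v's$, where $\u',\v'$ lie in $\iR_*(y)$ for the unique $y\in\I_*$ with $s^*\circ y\circ s=z$ and $y<z$, so $\ellhat_*(y)=p-1$; the preliminary connects $\u'$ to $\v'$ by relations in $\sAp{p-1}\cup\sB$, and appending $s$ to that chain connects $\u$ to $\v$ by the same (unchanged) relations through words in $\iR_*(z)$ (using $\ellhat_*(s^*\circ y\circ s)=\ellhat_*(y)+1$); note $\sAq{q+1}$ is not even needed in this case. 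If instead $q<m(s,t)<\infty$, then $s\ne t$ and $\{s,t\}\subset\DesR(z)$ (the last letter of any element of $\iR_*(z)$ is a right descent of $z$), so Corollary~\ref{twodescent-cor} together with Proposition~\ref{hb-prop} produces words $\a,\b\in\iR_*(z)$ agreeing off a final alternating block of some common length $m\le m(s,t)$, with $\a$ ending in $s$ and $\b$ ending in $t$; the relation $\a\sim\b$ is then a generalized half-braid relation lying in $\sA_{p,m(s,t)}\subset\sAq{q+1}$, since $m(s,t)\ge q+1$. Because $\u$ and $\a$ share the last letter $s$, and $\v$ and $\b$ share the last letter $t$, the argument of the case $s=t$ connects $\u$ to $\a$ and $\v$ to $\b$ using $\sAp{p-1}\cup\sB$ through words in $\iR_*(z)$; splicing in $\a\sim\b$ gives the leg $\u\to\v$ inside $\iR_*(z)$ using only $\sAp{p-1}\cup\sAq{q+1}\cup\sB$. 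Concatenating the three legs produces the required chain (and its reverse handles the opposite orientation), and the opening observation finishes the proof.

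\textbf{Main obstacle.} The difficulty is not any single deep step but the discipline of keeping the derivation \emph{uniform in the suffix} $\c$: one must check that every relation invoked carries a trailing wildcard (so it survives appending $\c$ verbatim and with the same parameters), that every intermediate word genuinely sits in $\iR_*(z)$ (so the appended words remain involution words, via the second elementary fact), and---decisively for the statement---that the single ``new'' relation $\a\sim\b$ has parameter $m(s,t)\ge q+1$, which is exactly why it falls in $\sAq{q+1}$ rather than merely in $\sAq{q}$. Everything else reduces to the two preliminary facts above.
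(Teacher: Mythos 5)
Your proposal is correct and follows essentially the same route as the paper: both reduce the problem to joining $\u$ and $\v$ inside $\iR_*(z)$, invoke Corollary~\ref{twodescent-cor} to produce a pair of involution words for $z$ ending in $s$ and $t$ whose connecting relation lies in $\sA_{p,m(s,t)}\subset\sAq{q+1}$, and then strip the final letters and span the resulting prefixes by $\sAp{p-1}\cup\sB$ via Theorem~\ref{suff-thm}. The only difference is that you spell out two points the paper leaves implicit (uniformity of the chain in the trailing suffix, and the length argument showing only relations from $\sAp{p-1}$ can act on words of length $p-1$), which is a faithful elaboration rather than a different argument.
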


\begin{proof}
Let $\approx$, $\sim_1$, and $\sim_2$ be the transitive relations
respectively generated by $\sAp{p-1}\cup \sAq{q+1}\cup \sB$,
  $\sAp{p-1}\cup \sB$,
  and $ \sAq{q+1}\cup \sB$.
By Theorem~\ref{suff-thm},  
 it  suffices to show that $\u \approx \v$.
Since $\{s,t\}\subset \DesR(z)$,
it follows by Corollary~\ref{twodescent-cor} that there exist $\x, \y \in \iR_*(z)$ with either $\x=\y$ (if $s=t$) 
or $\x \sim_2 \y$ (if $q<m(s,t)$)
and such that $\x$ ends in $s$ and $\y$ ends in $t$.
Let $x',y' \in \I_*$ be the elements with $x' < s^* \circ x' \circ s= z$ and $y' <  t^* \circ y' \circ t = z$,
and let $\u',\v',\x',\y'$ be the words formed by omitting the last entries in $\u,\v,\x,\y$ respectively.
By construction,  $\{\u',\x'\} \subset \iR_*(x')$ and $\{\v',\y'\} \subset \iR_*(y')$,
so $\u' \sim_1 \x'$ and $\v' \sim_1 \y'$
by Theorem~\ref{suff-thm}. 
It follows that $\u\sim_1 \x  $ and $ \y \sim_1 \v$, so  $\u \approx \v$.
\end{proof}

\begin{lemma}\label{commute-impl-lem}
Let $r_1,\dots,r_k,s,t \in S$. Assume $k>0$ and $m(s,t) = 2$
and suppose the words $(r_1,\dots,r_k,s)$, and $(r_1,\dots,r_k,t)$ both belong to $\iR_*(z)$ for some $z \in \I_*$.
Then $\sAp{k}\cup \sAq{3}\cup \sB \Rightarrow {\sim}$ 
for the word relation ${\sim}$ defined by
$(r_1,\dots,r_k,s,\text{ --- }) \sim (r_1,\dots,r_k,t,\text{ --- }).$
\end{lemma}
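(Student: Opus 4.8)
The plan is to reduce this statement to Lemma~\ref{relation-implication-lem}, which is the workhorse just proved. The hypotheses of Lemma~\ref{commute-impl-lem} give us words $\r\s = (r_1,\dots,r_k,s)$ and $\r\t = (r_1,\dots,r_k,t)$ lying in a common $\iR_*(z)$, with $\s=(s)$, $\t=(t)$, so $n=1$, $p=k+1$, and $q = m(s,t) = 2$. The target relation ${\sim}$ is exactly the one Lemma~\ref{relation-implication-lem} produces, with $\sAp{p-1}\cup\sAq{q+1}\cup\sB = \sAp{k}\cup\sAq{3}\cup\sB$ — which matches the claimed conclusion. So it suffices to verify hypotheses (a) and (b) of Lemma~\ref{relation-implication-lem} in this setting.

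Hypothesis (a) is immediate: we are given $\{\r\s,\r\t\}\subset \iR_*(z)$. For hypothesis (b), I need to exhibit $\u$ in the equivalence class $\cC$ of $\r\s$ (under $\sAp{p-1}\cup\sB = \sAp{k}\cup\sB$) and $\v$ in the equivalence class $\cD$ of $\r\t$ whose right-most entries $s',t'$ satisfy either $s'=t'$ or $q < m(s',t') < \infty$, i.e. $2 < m(s',t') < \infty$. The simplest choice is $\u = \r\s$ and $\v = \r\t$ themselves, with right-most entries $s$ and $t$; but since $m(s,t)=2$ this does not satisfy $2 < m(s,t)$, nor is $s=t$ (if $s=t$ the statement is trivial, but we should not assume that). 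So the real content is to find, within $\cC$ and $\cD$, representatives ending in letters $s'$ and $t'$ that are either equal or joined by a finite edge of multiplicity $>2$ in the Coxeter diagram.

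The main obstacle is therefore precisely the construction of these representatives $\u,\v$ — establishing that two involution words for $z$ ending in commuting generators $s,t$ can be rewritten, using only braid relations and lower-rank generalized half-braid relations, to end in a pair of generators that ``see'' each other non-trivially. Here I would argue as follows. Since $s,t \in \DesR(z)$ and $m(s,t)=2$, connectedness of the relevant neighborhood of the Coxeter diagram (together with $z$ being a non-trivial involution with these descents) forces the existence of some $u \in \DesR(z)$ with $2 < m(s,u) < \infty$ or $2 < m(t,u) < \infty$ — or else $s$ (or $t$) commutes with every other descent, a degenerate case one handles directly. Given such a $u$, Corollary~\ref{twodescent-cor} applied to the pair $\{s,u\}\subset\DesR(z)$ yields a pair of involution words in $\iR_*(z)$, one ending in $s$ and one ending in $u$, related by a single generalized half-braid relation; and by Theorem~\ref{suff-thm} any two involution words in $\iR_*(x')$ (where $x' < s^*\circ x'\circ s = z$) are connected by the full set of generalized half-braid relations. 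The point requiring care — and the crux of the lemma — is a length/rank bookkeeping argument showing these connecting relations all lie in $\sAp{k}$ (equivalently, have first index $\le k$), so that appending the final letter keeps us inside $\cC$; this parallels exactly the manipulation in the proof of Lemma~\ref{relation-implication-lem} where $\u'\sim_1\x'$ is deduced from $\{\u',\x'\}\subset\iR_*(x')$. Once $\u$ ends in $s$ with $2<m(s,u)<\infty$ is arranged — actually it is cleaner to simply take $\u=\r\s$ itself and instead produce $\v\in\cD$ ending in some $t'$ with $2<m(s,t')<\infty$, or a symmetric variant — hypothesis (b) holds and Lemma~\ref{relation-implication-lem} finishes the proof. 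I expect the degenerate subcase (where $s$ or $t$ commutes with all other right descents of $z$, so no suitable $u$ exists) to need a short separate argument, likely showing $\iR_*(z)$ is then small enough that ${\sim}$ is spanned outright by $\sB$ together with the relevant half-braid relation already in $\sAp{k}$.
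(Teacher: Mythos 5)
Your reduction to Lemma~\ref{relation-implication-lem} does not go through as stated, and the obstruction is exactly at the point you flag as ``requiring care.'' Hypothesis (b) of that lemma demands representatives $\u\in\cC$ and $\v\in\cD$ where $\cC,\cD$ are the equivalence classes of $\r\s,\r\t$ under $\sAp{k}\cup\sB$ \emph{only}. A relation in $\sAp{k}$ rewrites positions $1,\dots,i$ for some $i\le k$ and leaves the suffix untouched, so it can never change the final letter of a word of length $k+1$; within $\cC$ and $\cD$ the last letter can only be altered by ordinary braid moves. The words ending in your auxiliary generator $u$ that Corollary~\ref{twodescent-cor} supplies are reached from $\r\s$ (resp.\ $\r\t$) only by additionally applying the length-$(k+1)$ generalized half-braid relation in $\sA_{k+1,\,m(s,u)}$ itself --- that relation lies in $\sAq{3}$ but emphatically not in $\sAp{k}$, so those words need not lie in $\cC$ or $\cD$. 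Your claim that ``these connecting relations all lie in $\sAp{k}$'' is therefore false for the one relation that matters. (Your existence argument for $u$ via ``connectedness of the neighborhood'' is also unsubstantiated as written, though that turns out to be the lesser problem.)

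The paper avoids this by arguing directly rather than through Lemma~\ref{relation-implication-lem}, and by targeting a specific letter: $r=r_k$, the last entry of the given prefix. A root computation shows $z\alpha_r\in\Phi^-$, so $\{r,s,t\}\subset\DesR(z)$ and hence $m(r,s),m(r,t)<\infty$ automatically --- no connectedness argument and no degenerate case. One then finds $\a,\b\in\iR_*(z)$ both ending in $r$ with $\u\approx\a$ and $\v\approx\b$, where $\approx$ is the relation generated by the \emph{full} target set $\sAp{k}\cup\sAq{3}\cup\sB$: if $m(r,s)=2$ a single commutation in $\sB$ suffices, and if $m(r,s)\ge3$ one passes through Corollary~\ref{twodescent-cor} using first $\sAp{k}\cup\sB$ (via Theorem~\ref{suff-thm} on the truncated words) and then one $\sAq{3}$ relation to flip the alternating suffix. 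Since $\a$ and $\b$ end in the same letter $r$, deleting it lands both in $\iR_*$ of a common smaller involution, whence $\a\sim_1\b$ by $\sAp{k}\cup\sB$. If you want to salvage your write-up, replace the appeal to Lemma~\ref{relation-implication-lem} with this direct argument; the lemma's hypothesis (b) is simply too restrictive to capture the $\sAq{3}$ move you need.
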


\begin{proof}
Let $y \in \I_*$  be 
such that $(r_1,\dots,r_k) \in \iR_*(y)$. Since $k>0$ we have $y\neq 1$.
It follows from Proposition~\ref{hb-prop} 
that $\{s,t\} \cap \DesR(y) = \varnothing$ and 
$ysy^{-1} = t^*$ and $yty^{-1} = s^*$.
Lemma~\ref{commute-lem} implies that 
$y \alpha_s = \alpha_{t^*}$ and $y \alpha_t = \alpha_{s^*}$, and hence $z = s^*\circ y \circ s = s^*t^*y$.
Let $r=r_k \in \DesR(y)$.
One checks that 
$z \alpha_r = s^*t^*y\alpha_r\in \Phi^-$, so $\{r,s,t\}\subset \DesR(z)$.
Therefore both $m(r,s)$ and $m(r,t)$ are finite.

Define $\approx$, $\sim_1$, and $\sim_2$ as the transitive relations respectively generated by $\sAp{k}\cup \sAq{3}\cup \sB$,
$\sAp{k}\cup \sB$,
and
$ \sAq{3}\cup \sB$.
Let $\u = (r_1,\dots,r_{k-1},r,s)$ and $\v = (r_1,\dots,r_{k-1},r,t)$ so that $\u,\v \in \iR_*(z)$. It suffices to check that $\u \approx \v$. For this, it is enough to find $\a,\b\in \iR_*(z)$ which both end in $r$ and satisfy $\u\approx \a$ and $\v\approx \b$, since
then   $\a \sim_1 \b$ by Theorem~\ref{suff-thm}.
If $m(r,s)=2$ then we can take $\a=(r_1,\dots,r_{k-1},s,r)$. If $m(r,s) \geq 3$ then 
it follows from Corollary~\ref{twodescent-cor} that $\iR_*(z)$ contains two  words of the form
$\u' = (r_1',\dots,r_j',\underbrace{\dots,s,r,s,r,s}_{m\text{ terms}}) $ and $ \a = (r_1',\dots,r_j',\underbrace{\dots,r,s,r,s,r}_{m\text{ terms}})$
where $j \leq k-1$ and $m\geq 2$. In this case, we have $\u \sim_1 \u'$ by Theorem~\ref{suff-thm} and $\u'\sim_2 \a$ by definition, so $\u\approx \a$ as desired.  
The word $\b \in \iR_*(z)$ with $\v\approx \b$ is constructed by a similar argument.
\end{proof}

Given $J\subset S$, let $V_J = \RR\spanning \{ \alpha_s : s \in J\} \subset V$ and $W_J = \langle s \in J\rangle\subset W$, where $V$ is defined as  in Section~\ref{prelim-sect}.
We write $\RR[S] = \RR[x_s : s \in S]$ for the polynomial ring over $\RR$ in a commuting set of indeterminates indexed by $S$,
and frequently consider the free $\RR[S]$-module $V\otimes_\RR \RR[S]$, whose elements are just linear combinations of the simple roots $\{\alpha_s : s \in S\}$ with polynomial coefficients.
We view each $w \in W$ as a linear automorphism of this module by identifying $w$ with $w\otimes_\RR 1$, and extend
$(\cdot,\cdot)$ to an $\RR[S]$-bilinear form on $V\otimes_\RR \RR[S]$ with values in $\RR[S]$ in the obvious way.

\begin{definition}
A \emph{braid system} 
for $(W,S,*)$
consists of the following data:
\begin{itemize}
\item Two words $\s = (s_1,s_2,\dots,s_n)$ and $\t= (t_1,t_2,\dots,t_n)$ of the 
same length.
\item An $\RR$-linear map  $\sigma : V_J \to V\otimes_\RR \RR[S]$ for some subset $J \subset S$.

\item A set of constraints $C$, consisting of equalities and inequalities involving elements of $\RR[S]$.
\end{itemize}
We write $(\{\s, \t\}, \sigma, C)$ to succinctly refer to this data. 
The sequences $\s$ and $\t$ are the \emph{words} of the braid system,
and
the set $J\subset S$ is the system's \emph{domain}.
\end{definition}

Let $\cB=(\{\s, \t\}, \sigma, C)$ be a braid system with domain $J$.
The system $\cB$ is \emph{constant}
if  $\sigma V_J \subset V$ 
 and $C = \varnothing$. 
We write $\alpha < 0$ (respectively, $\alpha > 0$) to indicate that  $\alpha \in V$ is a nonzero
linear combination of simple roots with nonpositive (respectively, nonnegative) coefficients.
Say that a map $f : V_J \to V$ is \emph{root-preserving}
if 
 $f(\alpha_s) < 0$ or $f(\alpha_s)>0$ for each $s \in S$.
Every $w \in W$ acts on $V$ as a root-preserving linear transformation.
A \emph{solution} to  $\cB$ 
is a ring homomorphism $\RR[S] \to \RR$ which transforms 
all constraints in $C$ to true statements, and which is such that $(1\otimes_\RR \psi) \circ \sigma : V_J \to V$ is a 
root-preserving map.
A braid system is \emph{valid}
if its words are  both reduced and it has a solution.
The following property has a slightly more involved definition.

\begin{definition}
Let $\sigma$ be a map $V_J \to V\otimes_\RR \RR[S]$.
Suppose $r,s \in J$ are such that 
$\sigma\alpha_r = \alpha_{\theta(r)^*}$ and $\sigma\alpha_s = \alpha_{\theta(s)^*}$
for a bijections $\theta : \{r,s\} \to \{r,s\}$,
and $m_\theta(s,t) < m(r,s) < \infty$.
In this case, for each $t \in J$ we refer to 
$
 (\underbrace{r,s,r,s,r,\dots}_{m_\theta(s,t)\text{ terms}}, t, \text{ --- }) \sim  (\underbrace{s,r,s,r,s,\dots}_{m_\theta(s,t)\text{ terms}},t, \text{ --- })
$
as a $\sigma$-relation. The \emph{$\sigma$-equivalence class} of a word is its equivalence class under the transitive relation generated by
$\sB$ and the $\sigma$-relations just described.
A braid system $\cB = (\{\s,\t\}, \sigma, C)$ 
 is \emph{redundant} if
  there is a word in the $\sigma$-equivalence class of $\s$ (respectively, $\t$) with right-most entry $s' \in S$ (respectively, $t' \in S$) such that
 $s'=t'$ or $m(s,t) < m(s',t') <\infty$, where $s$ and $t$ denote the right-most entries of $\s$ and $\t$.
\end{definition}

\begin{lemma}\label{3-lem}
Let $\cB $ be a braid system 
with words $\s = (s_1,\dots,s_n)$ and $\t = (t_1,\dots,t_n)$. 
Suppose $y,z \in \I_*$ are such that $\{\r\s,\r\t\}\subset \iR_*(z)$ for some
$\r=(r_1,\dots,r_k) \in \iR_*(y)$.
 Assume $n>0$, and 
let $p= \ellhat_*(z) = k+n$
and  $q = m(s_n,t_n)$.
 If $\cB$ is redundant, then $\sAp{p-1} \cup \sAq{q+1} \cup \sB \Rightarrow {\sim}$
 for the word relation ${\sim}$ given by
 $(r_1,\dots,r_k,s_1,\dots,s_n,\text{ --- }) \sim (r_1,\dots,r_k,t_1,\dots,t_n,\text{ --- })$.
\end{lemma}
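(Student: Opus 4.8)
The goal is to reduce the claim about the word relation $\sim$ to the two combinatorial lemmas already in hand, namely Lemma~\ref{relation-implication-lem} and (in the degenerate case $q=2$) Lemma~\ref{commute-impl-lem}, by extracting from the redundancy of $\cB$ the hypothesis~(b) of those lemmas. The plan is as follows. First I would record the hypothesis~(a) of Lemma~\ref{relation-implication-lem} for free: it is exactly the assumption $\{\r\s,\r\t\}\subset\iR_*(z)$ given in the statement of Lemma~\ref{3-lem}. So everything comes down to producing words $\u$ in the $\sAp{p-1}\cup\sB$-equivalence class of $\r\s$ and $\v$ in the $\sAp{p-1}\cup\sB$-equivalence class of $\r\t$ whose right-most entries $s',t'$ satisfy $s'=t'$ or $q<m(s',t')<\infty$. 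The redundancy of $\cB$ gives words $\u_0,\v_0$ with this property for the right-most entries, but only in the \emph{$\sigma$-equivalence class} of the words $\s$ and $\t$ of $\cB$, not of the longer words $\r\s$ and $\r\t$. Bridging this gap is the heart of the argument.

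The key step is therefore to show that the $\sigma$-relations used in forming the $\sigma$-equivalence class of $\s$ lift to genuine relations in $\sAp{p-1}$ when one prepends $\r$ and trails them with arbitrary suffixes. Concretely, a $\sigma$-relation involves letters $r,s\in J$ with $\sigma\alpha_r=\alpha_{\theta(r)^*}$ and $\sigma\alpha_s=\alpha_{\theta(s)^*}$ for a bijection $\theta:\{r,s\}\to\{r,s\}$ and $m_\theta(s,t)<m(r,s)<\infty$; this is precisely the configuration in which Proposition~\ref{hb-prop} manufactures an element of $\I_*$ whose involution word set contains the two words appearing in the $\sigma$-relation. The point is that, along any involution word $\r\s$ of $z$, applying a sequence of ordinary braid relations and these half-braid moves within a terminal segment keeps the word inside $\iR_*(\cdot)$ for elements of $\I_*$ of length $\le p$, so each such move is an instance of a relation in $\sA_{i,j}$ with $i\le p-1$ — i.e.\ lies in $\sAp{p-1}$. (Here one uses that $\sigma$ is a solution-compatible stand-in for the conjugation map $w\mapsto(ywy^{-1})^*$ attached to the prefix $\r$, so that the $\sigma$-relations genuinely describe half-braid relations valid at $y$; the validity of $\cB$ is what guarantees $\sigma$ can be realized this way.) Thus the $\sigma$-equivalence class of $\s$, prepended with $\r$, sits inside the $\sAp{p-1}\cup\sB$-equivalence class of $\r\s$, and likewise for $\t$.

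Having established that, I would take $\u=\r\u_0$ and $\v=\r\v_0$ where $\u_0,\v_0$ are the redundancy witnesses for $\cB$. Their right-most entries are $s',t'$ with $s'=t'$ or $q=m(s,t)<m(s',t')<\infty$. If $q\ge 3$, hypotheses~(a) and~(b) of Lemma~\ref{relation-implication-lem} are now both met (with $p=k+n$), and that lemma directly yields $\sAp{p-1}\cup\sAq{q+1}\cup\sB\Rightarrow{\sim}$, which is exactly the conclusion. If $q=2$, then $\sAq{q+1}=\sAq{3}$ and I would instead invoke Lemma~\ref{commute-impl-lem}: the words $(r_1,\dots,r_k,s_n)$ and $(r_1,\dots,r_k,t_n)$ both lie in $\iR_*(z)$ (truncate $\r\s$ and $\r\t$, using that $s_n,t_n$ commute and Corollary~\ref{twodescent-cor}), so that lemma gives $\sAp{k}\cup\sAq{3}\cup\sB\Rightarrow{\sim_0}$ for the terminal commutation, and combining with ordinary braid relations and the $\sigma$-lifting above propagates this to $\sim$; since $k\le p-1$ this is subsumed by $\sAp{p-1}\cup\sAq{q+1}\cup\sB$.

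I expect the main obstacle to be the second paragraph: carefully justifying that each $\sigma$-relation, once the prefix $\r$ is restored, is an honest generalized half-braid relation in $\sAp{p-1}$ and not in some $\sA_{i,j}$ with $i$ too large. This requires pinning down the relationship between the formal map $\sigma$ of the braid system and the conjugation map $\theta_y:w\mapsto(ywy^{-1})^*$ of Proposition~\ref{hb-prop}, via a solution $\psi$ of $\cB$ for which $(1\otimes\psi)\circ\sigma$ is root-preserving, and checking that root-preservation plus the constraints $C$ force $\sigma$ to agree with $\theta_y$ on the relevant simple roots so that $m_\theta(s,t)$ computed from $\sigma$ matches the one from $\theta_y$. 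The length bookkeeping — that the intermediate words all correspond to involutions of length at most $p$, hence give relations in $\sAp{p-1}$ — is then a routine consequence of the fact that half-braid and braid moves never increase $\ellhat_*$ along a word for a fixed $z$.
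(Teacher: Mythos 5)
Your proposal follows the same route as the paper: the paper's entire proof is the one-line observation that $\x\mapsto\r\x$ carries the $\sigma$-equivalence classes of $\s$ and $\t$ into the equivalence classes of $\r\s$ and $\r\t$ under $\sAp{p-1}\cup\sB$, after which Lemma~\ref{relation-implication-lem} applies; your second paragraph is a fleshed-out version of that assertion, and the subtlety you flag at the end (matching $\sigma$ with $w\mapsto(ywy^{-1})^*$ so that each $\sigma$-relation prepended with $\r$ is an honest relation in $\sAp{p-1}$) is precisely what the paper leaves implicit. One correction: Lemma~\ref{relation-implication-lem} carries no hypothesis $q\geq 3$, so your separate $q=2$ branch is unnecessary --- which is fortunate, because as written it is flawed: the truncations $(r_1,\dots,r_k,s_n)$ and $(r_1,\dots,r_k,t_n)$ cannot both lie in $\iR_*(z)$ when $n>1$, since $\ellhat_*(z)=k+n$, so Lemma~\ref{commute-impl-lem} is not applicable in the way you invoke it; simply apply Lemma~\ref{relation-implication-lem} uniformly for all $q\geq 2$.
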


\begin{proof}
If $\cB$ is redundant,  $\x\mapsto \r\x$ maps
 the $\sigma$-equivalence classes of $\s$ and $\t$ into the equivalence classes of $\r\s$ and $\r\t$ under the relation
 generated by $\sAp{p-1} \cup \sB$, so this  follows from Lemma~\ref{relation-implication-lem}.
\end{proof}

We describe five operations on braid systems.
Fix a braid system $\cB= (\{\s, \t\}, \sigma, C)$ with domain $J\subset S$.
Define $\partial J$ as the set of $s \in J-S$ such that
$m(s,t) > 2$ for some $t \in J$.
\ben
\item[(1)] When $\cB$ is constant and $r \in \partial J$, define $\cB  \downarrow r=(\{\s, \t\}, \sigma', C')$
where $\sigma' : V_{\{r\} \cup J} \to V\otimes_\RR \RR[S]$ is the unique linear map with 
$\sigma'|_{V_J} = \sigma 
$
and
$
\sigma' \alpha_r = -\sum_{s \in S}  \alpha_s \otimes_\RR x_s
$, 
and  $C'$ 
consists of the  constraints
$  x_s \geq 0$ for $s \in S$,
 $(\sigma' \alpha_r, \sigma'\alpha_s) = (\alpha_r,\alpha_s)$ for $s \in J$,
and
 $|\det \sigma| = 1$
 if  $\{r\} \cup J=S$.

\een
The constraints $(\sigma' \alpha_r, \sigma'\alpha_s) = (\alpha_r,\alpha_s)$
for $s \in J$ in $\cB \downarrow r$ form a system of
linear equations involving only the variables $x_s$ for $s \in \partial J \cup J$.
The only constraint in the system $\cB\downarrow r$ which involves any variable $x_s$
for $s \notin \partial J \cup J$ is the inequality $x_s \geq 0$.
We intentionally exclude from $\cB \downarrow r$ the natural constraint
 $(\sigma' \alpha_r, \sigma' \alpha_r) = (\alpha_r,\alpha_r) =1$
to preserve this property. 
Below, let  $\r = (r)$.
\ben
\item[(2)] Define $r \bullet_{\mathrm{asc}} \cB = (\{\s, \t\}, \sigma, C \cup C')$ where $C' = \{ \sigma \alpha_r \geq 0\}. $

\item[(3)]
Define $r \bullet_{\mathrm{des}} \cB = (\{\r\s, \r\t\}, \sigma', C \cup C')$ where 
$\sigma' = r^*  \sigma $ and 
$C' = \{ \sigma \alpha_r = -\alpha_{r^*}\}$.

\item[(4)]
Define $r \customstar \cB = (\{\r\s, \r\t\}, \sigma', C\cup C')$ where 
$\sigma' = r^*  \sigma   r|_{V_J}$ and 
$C' = \{ -\alpha_{r^*} \neq \sigma \alpha_r \leq 0\}$.

\een
Note that if $\alpha \in V\otimes_\RR \RR[S]$, then writing $\alpha \leq 0$ or $\alpha\geq 0$ is equivalent to a system of inequalities involving the coefficients of $\alpha$ in the basis of simple roots.

\ben
\item[(5)] Given a ring homomorphism $\psi : \RR[S] \to \RR[S]$, define $\cB[\psi] = (\{\s,\t\}, \sigma', C')$ where
$\sigma'  = (1\otimes_\RR \psi) \circ \sigma$ and $C'$ is formed by applying $\psi$ to all elements of $C$ and then omitting
any trivially true constraints.
Observe that if $\psi $ is a {solution}  of $\cB$ then $\cB[\psi]$ is constant.
\een
Our main algorithm is now given in three parts.

\begin{algorithm}[Expand into constant systems]\label{alg1}
Suppose we are given as inputs 
a constant braid system $\cB_0$ with domain $K \subset S$, and an element $r \in \partial K$.
Let $J = \{r\} \cup K$. Return as output the tree of braid systems $\cT(\cB_0\downarrow r)$  whose root 
vertex is
$\cB_0\downarrow r$, in which
the children of a vertex $\cB= (\{\s,\t\}, \sigma, C)$ are computed as follows:
\ben
\item If $\cB$ is constant, invalid, or redundant, then $\cB$ has no children.
\item If $\cB$ has a finite number of solutions $\psi_1,\psi_2,\dots,\psi_n : \RR[S] \to \RR$,
  then 
 the children of $\cB$ are the constant systems $\cB[\psi_1]$, $\cB[\psi_2]$, \dots, $\cB[\psi_n]$.

\item Otherwise, let $\sigma_{st} \in \RR[S]$ be such that $\sigma \alpha_s = \sum_{t \in S} \alpha_t \otimes_\RR \sigma_{st}$ for $s \in J$.
Say that $s \in J$ is a \emph{conditional descent}  (respectively, \emph{unconditional descent}) if for some $t \in S$ 
the augmented constraints $C \cup \{ \sigma_{st} > 0\}$ (respectively, $C \cup \{\sigma_{st} \geq 0\}$)  are infeasible.
If the set of unconditional descents is nonempty, then let $s$ be the least such descent 
and define the children of $\cB$ to be $s \bullet_{\mathrm{des}} \cB$ and $s \customstar \cB$.
If the set of unconditional descents is empty,
then let $s$ be the least conditional descent (if any exist) and define
 the children of $\cB$ to $s \bullet_{\mathrm{asc}} \cB$, $s \bullet_{\mathrm{des}} \cB$, and $s \customstar \cB$.
 Otherwise
let $\cB$ be its own child, so that the tree is infinite.
\een
\end{algorithm}

\begin{remark}
Each of these steps can be implemented as
a linear programming problem in a small number of variables, which can be solved by various methods.
\end{remark}

Define the \emph{descent set} of a braid system $\cB = (\{\s,\t\}, \sigma, C)$ with domain $J$
to be $\Des(\cB) = \{ s \in J: \sigma\alpha_s  \in V\text{ and }\sigma \alpha_s <0\}.$
Note that if $\sigma = w|_{V_J}$ for some $w \in W$ then $\Des(\cB) \subset \DesR(w)$.

\begin{algorithm}[Eliminate descents]\label{alg2}
Suppose we are given as input
a constant braid system $\cB$ with domain $J \subset S$.
Execute the following loop, starting with $i=0$ and $\cB_0=\cB$:
\ben
\item[1.] Given $\cB_i= (\{\s_i,\t_i\}, \sigma_i, \varnothing)$, the algorithm terminates with no output if 
the system $\cB_i$ is invalid or redundant,
or \emph{descent-periodic} in the sense defined below.
\item[2.] If instead $\Des(\cB_i)=\varnothing$ then the algorithm terminates with output $\cB_i$.
\item[3.] Otherwise, let $r $ be the least element of $\Des(\cB_i)$. 
Set $\cB_{i+1}=r \bullet_{\mathrm{des}} \cB_i$ if $\sigma_i \alpha_r = -\alpha_{r^*}$,
and otherwise
set $\cB_{i+1} =  r \customstar \cB_i$. The constraints added to $\cB_{i+1}$
are trivially satisfied, so we consider $\cB_{i+1}$ to again be a constant braid system.
Replace $i$ by $i+1$ and return to step 1.
\een
This loop may fail to terminate. When it does terminate there still may be no output returned. 
It remains to clarify step 1. 
There, we define  $\cB_i$ to be \emph{descent-periodic} 
if there exists a quasi-polynomial formula for $\sigma_i$
which shows that the sequence of minimal descents in $\Des(\cB_j)$ for $j\geq i$ is infinite and periodic.
More precisely, we consider $\cB_i$ to be descent-periodic
if for some $p,q \in \PP$ with $q\geq 2$ and $pq \leq i$ and some $r_1,r_2,\dots,r_p \in S$, the following procedure returns True:
\ben
\item[i.] If  $\s_i$ and $\t_i$ do not both begin with the word $(r_p,\dots, r_2, r_1)$ repeated $q$ times,
return False.
Otherwise, let $f_{st}^{j} \in \RR[x]$ for $(s,t,j) \in J \times S \times [p]$ 
be the unique polynomials of degree less than $q$ 
such that if $\lambda_j(n) : V_J \to V$ is the linear transformation with
$ \alpha_s \mapsto  \sum_{t \in S} f_{st}^{j}(n) \alpha_{t}$ 
then 
$ \sigma_{(i-pq)+j+np} = \lambda_j(n) $
for $j=1,2,\dots,p$ and $n=0,1,\dots,q-1$.

\item[ii.] For each $j \in [p]$ define $\beta_j = \alpha_{r_{j+1}} $ and $\cN_j = \{ \lambda_j(n) \beta_j: n \in \NN\}$,
where we set $r_{p+1} = r_1$.
If for some $j \in [p]$
there exists $\alpha \in \cN_j$ with $\alpha \not < 0$, or if $-\beta_j^* \in \cN_j \neq \{-\beta_j^*\}$, again return False.

\item[iii.] For a linear map $\lambda : V_J \to V$ and $r \in J$, define $\lambda \bullet r$ to be $r^*\lambda$ if $\lambda \alpha_r = -\alpha_{r^*}$ and $r^* \lambda r|_{V_J}$ otherwise.
Return True if for all $n \in \NN$  it holds that 
$\lambda_1(n+1) = \lambda_p(n)\bullet r_1$
and 
$\lambda_j(n+1) = \lambda_{j-1}(n+1)\bullet r_j$ for $j=2,3,\dots,p$.
 Otherwise, return False.
\een
If $\cB_i$ is descent-periodic then we cannot have $\sigma_i = w|_{V_J}$ for any $w \in \I_*$, since then it would hold that $w_j > w_jr_1 > w_jr_1r_2 > \dots > w_jr_1r_2\cdots r_p = w_{j+1}$
for all $j \in \NN$ where $w_j = w(r_1r_2\cdots r_p)^j$.
\end{algorithm}

\begin{remark}
Getting the outer loop to terminate is the bottleneck in all of our computations.
We should do this as soon as we can determine that $\sigma_i \neq w|_{V_J}$ for all $w \in \I_*$.
Checking whether $\cB_i$ is descent-periodic gives a computable sufficient condition for this to occur,
and this check is the only part of Algorithm~\ref{alg2} which relies implicitly on $(W,S)$ being a finite or affine Coxeter system. 
\end{remark}

Given $s,t \in S$ and a bijection $\theta : \{s,t\} \to \{s,t\}$, define 
$\cB_{s,t}^\theta =\cB_{s,t}^\theta = (\{\s, \t\}, \sigma, \varnothing)$ as the constant braid system 
in which
$\s = (\dots,s,t,s,t,s)$ and $\t = (\dots,t,s,t,s,t)$ are words with  length $m_\theta(s,t)$, and
 $\sigma$ is the linear map $V_{\{s,t\}} \to V_{\{s^*,t^*\}}\subset V$ with
$\sigma\alpha_s = \alpha_{\theta(s)^*}$ and $\sigma \alpha_t= \alpha_{\theta(t)^*}$.

\begin{algorithm}[Reduce spanning relations]\label{alg3}
Suppose we are given as inputs
distinct  $s,t \in S$.
The output of the algorithm is the forest of constant braid systems $\F_{s,t} = \F_{s,t}(W,S,*)$ 
whose roots are the two systems $\cB_{s,t}^\theta$, 
in which
the children of a vertex $\cB$ with domain $J$ are computed as follows:
\ben
\item[1.] If $\partial J = \varnothing$ then $\cB$ is a leaf.

\item[2.] Otherwise, 
apply Algorithm~\ref{alg1} to construct $ \cT(\cB\downarrow r)$ for each $r \in \partial J$.
Apply Algorithm~\ref{alg2} to each constant leaf in the resulting trees, and include any braid systems that are returned as children of $\cB$.
If there is no output for all of these systems, then $\cB$ is again a leaf.
\een
 The construction of $\F_{s,t}$ fails if any application of Algorithm~\ref{alg1} does not yield a finite tree
or any application of Algorithm~\ref{alg2} fails to terminate. 
\end{algorithm}

We say that $\F_{s,t}$ \emph{exists} 
if 
Algorithm~\ref{alg3} terminates successfully when given $s,t \in S$ as input.

\begin{example}
Suppose $(W,S)$ is the Coxeter system of type $A_3$ and $*=\id$ as in Example~\ref{ex-1}. 
Let $s=s_1$ and $t=s_2$.
The forest $\F_{s,t}$ exists and its two roots are the constant braid systems 
\[ 
\cB = 
\begin{cases}
\barr{ l | l }
\s &  (2,1) \\
\t &  (1,2) \\
\hline
\sigma &
\barr{llll}
 \alpha_1  \mapsto  \alpha_1 \\
\alpha_2  \mapsto  \alpha_2 
\earr
\\
\earr
\end{cases}
\qquand
\cB' = 
\begin{cases}
\barr{ l | l }
\s &  (2,1) \\
\t &  (1,2) \\
\hline
\sigma &
\barr{llll}
 \alpha_1  \mapsto  \alpha_2 \\
\alpha_2  \mapsto  \alpha_1
\earr
\\
\earr
\end{cases}
\]
We first compute $\cT(\cB\downarrow s_3)$. The root of the tree is
\[
\cB \downarrow s_3 
=
\begin{cases}
\barr{ l | l }
\s &  (2,1) \\
\t &  (1,2) \\
\hline
\sigma &
\barr{llll}
 \alpha_1  \mapsto  \alpha_1 \\
\alpha_2  \mapsto  \alpha_2 \\
\alpha_3  \mapsto  -x_1 \alpha_1 - x_2\alpha_2 - x_3 \alpha_3
\earr
\\
\hline
C & 
\barr{l} 
0\leq x_1 , x_2 , x_3  \\
0 = \tfrac{1}{2} + \tfrac{1}{2} x_1 - x_2 + \tfrac{1}{2} x_3 \\
0 = -x_1 + \tfrac{1}{2} x_2 \\ 
1 = |x_3|
\earr
\\
\earr
\end{cases}
\]
This system already has a unique solution, given by the ring homomorphism $\psi : \RR[x_1,x_2,x_3] \to \RR$
with $x_1 \mapsto \frac{2}{3}$, $x_2 \mapsto \frac{4}{3}$, and $x_3 \mapsto 1$.
Thus, $\cT(\cB\downarrow s_3)$ is  a path of length two with leaf
\[
(\cB \downarrow s_3)[\psi] =
\begin{cases}
\barr{ l | l }
\s &  (2,1) \\
\t &  (1,2) \\
\hline
\sigma &
\barr{llll}
 \alpha_1  \mapsto  \alpha_1 \\
\alpha_2  \mapsto  \alpha_2 \\
\alpha_3  \mapsto  -\tfrac{2}{3} \alpha_1 - \tfrac{4}{3}\alpha_2 -  \alpha_3
\earr
\earr
\end{cases}
\]
Executing Algorithm~\ref{alg2} with this braid system as input 
gives $\cB_0=(\cB \downarrow s_3)[\psi]$ and 
\[
\cB_1 = s_3\customstar \cB_0 =
\begin{cases}
\barr{ l | l }
\s &  (3,2,1) \\
\t &  (3,1,2) \\
\hline
\sigma &
\barr{llll}
 \alpha_1  \mapsto  \alpha_1 \\
\alpha_2  \mapsto   -\tfrac{2}{3} \alpha_1 - \tfrac{1}{3}\alpha_2 + \tfrac{2}{3}  \alpha_3 \\
\alpha_3  \mapsto  \tfrac{2}{3} \alpha_1 + \tfrac{4}{3}\alpha_2 +\tfrac{1}{3}  \alpha_3
\earr
\earr
\end{cases}
\]
which is invalid since $\sigma \alpha_2$ has both positive and negative coefficients. Hence Algorithm~\ref{alg2} terminates with no output, and $\cB$ has no children in $\F_{s,t}$.
On the hand, we have 
\[
\cB' \downarrow s_3 
=
\begin{cases}
\barr{ l | l }
\s &  (2,1) \\
\t &  (1,2) \\
\hline
\sigma &
\barr{llll}
 \alpha_1  \mapsto  \alpha_2 \\
\alpha_2  \mapsto  \alpha_1 \\
\alpha_3  \mapsto  -x_1 \alpha_1 - x_2\alpha_2 - x_3 \alpha_3
\earr
\\
\hline
C & 
\barr{l} 
0\leq x_1 , x_2 , x_3  \\
0 =  \tfrac{1}{2} x_1 - x_2 + \tfrac{1}{2} x_3 \\
0 = \tfrac{1}{2} -x_1 + \tfrac{1}{2} x_2 \\ 
1 = |x_3|
\earr
\\
\earr
\end{cases}
\]
which also has a unique solution, given by the ring homomorphism $\psi' : \RR[x_1,x_2,x_3] \to \RR$
with $x_1 \mapsto 1$, $x_2 \mapsto 1$, and $x_3 \mapsto 1$.
The tree $\cT(\cB'\downarrow s_3)$ is again a path of length two, now with leaf
\[
(\cB' \downarrow s_3)[\psi'] =
\begin{cases}
\barr{ l | l }
\s &  (2,1) \\
\t &  (1,2) \\
\hline
\sigma &
\barr{llll}
 \alpha_1  \mapsto  \alpha_2 \\
\alpha_2  \mapsto  \alpha_1 \\
\alpha_3  \mapsto  -\alpha_1 -\alpha_2 -  \alpha_3
\earr
\earr
\end{cases}
\]
Executing Algorithm~\ref{alg2} with this braid system as input 
gives $\cB_0=(\cB' \downarrow s_3)[\psi'] $, and then 
\[
\cB_1 = s_3\customstar \cB_0=
\begin{cases}
\barr{ l | l }
\s &  (3,2,1) \\
\t &  (3,1,2) \\
\hline
\sigma &
\barr{llll}
 \alpha_1  \mapsto  \alpha_2 + \alpha_3 \\
\alpha_2  \mapsto   -\alpha_2 \\
\alpha_3  \mapsto   \alpha_1 + \alpha_2
\earr
\earr
\end{cases}
\qquad
\cB_2 = s_3\bullet_{\mathrm{des}} \cB_1=
\begin{cases}
\barr{ l | l }
\s &  (2,3,2,1) \\
\t &  (2,3,1,2) \\
\hline
\sigma &
\barr{llll}
 \alpha_1  \mapsto   \alpha_3 \\
\alpha_2  \mapsto   \alpha_2 \\
\alpha_3  \mapsto   \alpha_1
\earr
\earr
\end{cases}
\]
which is valid and has no descents. Algorithm~\ref{alg2} therefore
terminates with $\cB_2$ as output. In $\F_{s,t}$,
this braid system is the unique child of $\cB'$ but has no children itself,
since its domain is $S$.
\end{example}

Define a braid system $\cB = (\{\s,\t\}, \sigma, C)$ with domain $J$ to be \emph{realizable} if for some $y,z \in \I_*$ there exists a solution $\psi$ with 
$(1\otimes_\RR \psi) \circ \sigma = y|_{V_J}$  and it holds that $\{\r\s,\r\t\}\subset \iR_*(z)$ for any (equivalently, every)  $\r \in \iR_*(y)$.
In this case, the pair $(y,z) \in \I_*\times \I_*$ \emph{realizes} the system.

\begin{lemma}\label{t-lem}
Suppose $(y,z) \in \I_*\times \I_*$ realizes a constant braid system
 $\cB$ with domain $J$.
Let $r \in \partial J$. Then $r \in \DesR(y)$ if and only if $(y,z)$ realizes $\cB \downarrow r$.
In this case, if the tree
 $\cT(\cB \downarrow r)$ is finite, then it has a leaf
which
is realized by $(x,z) \in \I_*\times \I_*$ for some $x \in \I_*$ with $x\leq y$.
\end{lemma}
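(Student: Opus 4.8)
The plan is to analyze the root map $\sigma$ of $\cB$ together with the hypothesis that $\sigma = y|_{V_J}$ for $y \in \I_*$, and to track how the operation $\downarrow r$ interacts with the twisted-involution structure. First I would recall that, because $(y,z)$ realizes the constant system $\cB$, we have $\sigma = y|_{V_J}$, and $\{\r\s,\r\t\}\subset\iR_*(z)$ for every $\r\in\iR_*(y)$; in particular $J\subset S$ and, since $\sigma=y|_{V_J}$, the descent set $\Des(\cB)$ is contained in $\DesR(y)$. For the first assertion, I would unwind the definition of $\cB\downarrow r$: its new root map $\sigma'$ extends $\sigma$ by sending $\alpha_r$ to $-\sum_{s\in S}\alpha_s\otimes_\RR x_s$, and its constraints are precisely $x_s\ge 0$, the pairing equations $(\sigma'\alpha_r,\sigma'\alpha_s)=(\alpha_r,\alpha_s)$ for $s\in J$, and $|\det\sigma|=1$ in case $\{r\}\cup J=S$. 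If $r\in\DesR(y)$, then $y\alpha_r\in\Phi^-$, so $y\alpha_r=-\sum_s c_s\alpha_s$ with all $c_s\ge 0$; defining $\psi$ to extend the realizing homomorphism by $x_s\mapsto c_s$ makes $(1\otimes\psi)\circ\sigma'=y|_{V_{\{r\}\cup J}}$, and the pairing constraints hold because $y$ preserves $(\cdot,\cdot)$, while $|\det y|=1$ handles the last constraint; hence $(y,z)$ realizes $\cB\downarrow r$. Conversely, if $(y,z)$ realizes $\cB\downarrow r$ then $\sigma'\alpha_r=y\alpha_r$ under the relevant solution, and since $\sigma'\alpha_r$ is by construction a nonpositive combination of simple roots (all $x_s\ge 0$), $y\alpha_r\in\Phi^-$, i.e.\ $r\in\DesR(y)$. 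The one subtlety here is the root-preserving requirement: I should check that the solution $\psi$ above genuinely makes $(1\otimes\psi)\circ\sigma'$ root-preserving, which is automatic because it equals $y|_{V_{\{r\}\cup J}}$ and every element of $W$ acts root-preservingly.

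For the second assertion, suppose $r\in\DesR(y)$ and $\cT(\cB\downarrow r)$ is finite. I would argue by induction along a root-to-leaf path in the tree, maintaining the invariant ``the current vertex is realized by $(x',z)$ for some $x'\in\I_*$ with $x'\le y$.'' The base case is the root $\cB\downarrow r$, realized by $(y,z)$ by the first part. For the inductive step, I would examine how children are formed in Algorithm~\ref{alg1}. If the current vertex $\cB''$ is constant, invalid, or redundant, it is a leaf and we are done. If $\cB''$ has finitely many solutions $\psi_1,\dots,\psi_n$, then because $(x',z)$ realizes $\cB''$ there is a solution with $(1\otimes\psi)\circ\sigma''=x'|_{V_{J''}}$; this $\psi$ must coincide with one of the $\psi_i$ on the variables actually appearing, so the child $\cB''[\psi_i]$ is still realized by $(x',z)$ with the same $x'\le y$, and we continue. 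Otherwise $\cB''$ has a (conditional or unconditional) descent $s$, and its children are among $s\bullet_{\mathrm{asc}}\cB''$, $s\bullet_{\mathrm{des}}\cB''$, $s\customstar\cB''$; since $(x',z)$ realizes $\cB''$ and $\sigma''=x'|_{V_{J''}}$, exactly one of the three cases $x'\alpha_s\ge 0$, $x'\alpha_s=-\alpha_{s^*}$, or $-\alpha_{s^*}\ne x'\alpha_s\le 0$ holds, and the corresponding child is realized: in the ascent case by $(x',z)$ again, in the $\bullet_{\mathrm{des}}$ case by $(s^*x',z)$ where $s^*x'=x's$ has length $\ell(x')-1$ (using Corollary~\ref{commute-cor}), and in the $\customstar$ case by $(s^*x's,z)$ which lies in $\I_*$ with strictly smaller length by Corollary~\ref{notcommute-lem} and Lemma~\ref{x-lem}; in all three sub-cases the new first coordinate is $\le x'\le y$ in Bruhat order. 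One must also check that in the two descent sub-cases the condition $\{\r\s,\r\t\}\subset\iR_*(z)$ persists for the new realizing element: this is exactly the content built into operations $\bullet_{\mathrm{des}}$ and $\customstar$, since prepending $s$ to the words corresponds to $z=s^*\circ x'\circ s$ or $z=x'\circ s$ staying fixed — indeed the words $\s,\t$ grow by $(s)$ at the left precisely to keep $\{\r_{\text{new}}\s_{\text{new}},\r_{\text{new}}\t_{\text{new}}\}\subset\iR_*(z)$, where $\r_{\text{new}}\in\iR_*(x'_{\text{new}})$.

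Since the tree is finite, this path terminates at a leaf, which by the invariant is realized by $(x,z)$ for some $x\in\I_*$ with $x\le y$, proving the claim. I expect the main obstacle to be the careful bookkeeping in the inductive step: verifying that at each of the three branch types the realizing pair can be updated consistently — in particular that the $z$-coordinate is genuinely preserved (i.e.\ that the left-concatenation of the words in the $\bullet_{\mathrm{des}}$ and $\customstar$ operations is compatible with $\{\r\s,\r\t\}\subset\iR_*(z)$), and that the Bruhat-order bound $x'\le y$ is maintained under left multiplication by $s$ or conjugation by $s$. The rest of the argument is a direct translation between the linear-algebraic data of braid systems and the Bruhat-order combinatorics of $\I_*$ recorded in the corollaries of Section~\ref{prelim-sect}.
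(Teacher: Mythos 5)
Your proposal is correct and takes essentially the same approach as the paper, whose own proof simply declares the first assertion "evident" and the inductive step down the tree "straightforward"; you have filled in exactly those details (the explicit solution $x_s\mapsto c_s$ from $y\alpha_r=-\sum_s c_s\alpha_s$, and the case analysis showing that at each vertex one of the children $\bullet_{\mathrm{asc}}$, $\bullet_{\mathrm{des}}$, $\customstar$, or $\cB[\psi_i]$ is realized by some $(u,z)$ with $u\le x'$). The only point worth making explicit is that when the ascent child is absent (an unconditional descent $s$), the realizing solution forces a negative coefficient in $x'\alpha_s$, so the ascent case genuinely cannot occur and one of the two remaining children is realized.
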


\begin{proof}
The first assertion is evident. 
Let $r \in \partial J\cap \DesR(y)$, assume $\cT(\cB \downarrow r)$ is finite, and suppose $\cA=(\{\s,\t\}, \sigma, C)$ is a non-leaf vertex   realized by $(v,z)\in \I_*\times \I_*$.
It suffices to show that a child of $\cA$ in $\cT(\cB\downarrow r)$ 
is realized by $(u,z)$ for $u \in \I_*$ with $u \leq v$,
and this is straightforward.
\end{proof}

\begin{lemma}\label{3+-lem}
Suppose $(y,z) \in \I_* \times \I_*$ realizes a constant braid system $\cB $
with words $\s = (s_1,\dots,s_n)$ and $\t = (t_1,\dots,t_n)$. 
Let $\r=(r_1,\dots,r_k) \in \iR_*(y)$ and define $\sim$ as the relation
$ (r_1,\dots,r_k,s_1,\dots,s_n,\text{ --- }) \sim (r_1,\dots,r_k,t_1,\dots,t_n,\text{ --- })$.
 Assume $n>0$, and 
let $p= \ellhat_*(z) = k+n$
and  $q = m(s_n,t_n)$.
Suppose we execute Algorithm~\ref{alg2} with $\cB$ as input.
 If the algorithm terminates with output $\cB'$, then $\cB'$ is realized by $(x,z)$ for some $x\leq y $.
If the algorithm terminates with no output, then $\sAp{p-1} \cup \sAq{q+1} \cup \sB \Rightarrow {\sim}$.
\end{lemma}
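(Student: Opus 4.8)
\emph{Proof proposal.} The plan is to follow the sequence of constant braid systems $\cB=\cB_0,\cB_1,\dots$ examined by Algorithm~\ref{alg2} on the input $\cB$, which by hypothesis is realized by $(y,z)$, and to show that every one of them is realized; this determines exactly how the loop can halt. The key observation is that a constant braid system realized by some $(v,z)\in\I_*\times\I_*$ is automatically valid---its map $\sigma=v|_{V_J}$ is root-preserving, since $v$ acts that way on $V$, and its words are reduced, being suffixes of words in $\iR_*(z)\subseteq\iHR_*(z)\subseteq\bigcup_u\cR(u)$---and is not descent-periodic, by the remark following Algorithm~\ref{alg2} (a descent-periodic system cannot have $\sigma=w|_{V_J}$ for any $w\in\I_*$). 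Hence, applied to a realized system, the loop can only halt at a realized $\cB_i$: either in Step~2 with output $\cB_i$, when $\Des(\cB_i)=\varnothing$, or in Step~1 with no output, and then precisely because $\cB_i$ is redundant.

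The one-step claim behind ``every $\cB_i$ is realized'' is: if a constant braid system $\cA$ with domain $J$ and words $\s_\cA,\t_\cA$ is realized by $(v,z)$ and $\Des(\cA)\ne\varnothing$ has least element $\rho$, then the system $\cA'$ produced by Step~3 has words $(\rho)\s_\cA,(\rho)\t_\cA$ and is realized by $(v',z)$ for the unique $v'\in\I_*$ with $v'<\rho^*\circ v'\circ\rho=v$; moreover $\r'(\rho)\in\iR_*(v)$ for every $\r'\in\iR_*(v')$. I would prove this by noting $\sigma_\cA=v|_{V_J}$ forces $v\alpha_\rho<0$, i.e. $\rho\in\DesR(v)$, and splitting on $v\alpha_\rho$: if $v\alpha_\rho=-\alpha_{\rho^*}$, then Corollary~\ref{commute-cor} gives $v\rho=\rho^*v<v$, so taking $v'=\rho^*v$ and $\cA'=\rho\bullet_{\mathrm{des}}\cA$ one checks $v'\in\I_*$, $v'<\rho^*\circ v'\circ\rho=v$, and that the new map $\rho^*\sigma_\cA$ is $v'|_{V_J}$; if $-\alpha_{\rho^*}\ne v\alpha_\rho<0$, then Corollary~\ref{notcommute-lem} gives $\rho^*v\rho<v$ with length dropping by $2$, so taking $v'=\rho^*v\rho$ and $\cA'=\rho\customstar\cA$ one checks the analogous facts for the new map $\rho^*\sigma_\cA\rho|_{V_J}$. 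In both cases $v'<\rho^*\circ v'\circ\rho=v$ is the elementary relation recalled in Section~\ref{prelim-sect}, by which the involution words of $v$ ending in $\rho$ are exactly the words $\r'(\rho)$ with $\r'\in\iR_*(v')$; together with the ``any/every $\r$'' form of realizability, this shows $\cA'$ is realized by $(v',z)$.

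Granting the claim, induction on $i$ shows every $\cB_i$ reached by the run is realized by some $(y_i,z)$ with $\ellhat_*(y_0)>\ellhat_*(y_1)>\cdots$, so the run (which terminates by hypothesis) stops at a realized $\cB_M$ either (a) in Step~2 with $\Des(\cB_M)=\varnothing$, or (b) in Step~1 with $\cB_M$ redundant. In case (a), the output is $\cB'=\cB_M$, realized by $(x,z)$ with $x=y_M\le y$, which is the first assertion. In case (b), the words of $\cB_M$ are $(\rho_{M-1},\dots,\rho_0)\s$ and $(\rho_{M-1},\dots,\rho_0)\t$, where $\rho_0,\dots,\rho_{M-1}$ are the minimal descents chosen along the way; picking any $\r_M\in\iR_*(y_M)$, iterating the one-step claim gives $\r^\sharp:=\r_M(\rho_{M-1},\dots,\rho_0)\in\iR_*(y)$, and then $\r_M\s_M=\r^\sharp\s$ and $\r_M\t_M=\r^\sharp\t$. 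Since $\ellhat_*(z)$ and $m(s_n,t_n)$ do not change along the run, Lemma~\ref{3-lem} applied to $\cB_M$ (words $\s_M,\t_M$; pair $(y_M,z)$; prefix $\r_M$; the same $p,q$) gives $\sAp{p-1}\cup\sAq{q+1}\cup\sB\Rightarrow{\sim'}$, where $\sim'$ is the relation with prefix $\r^\sharp$ and suffixes $\s,\t$. Finally, $\r$ and $\r^\sharp$ both lie in $\iR_*(y)$, whose elements have length $k=p-n\le p-1$, so by Theorem~\ref{suff-thm} there is a chain of relations in $\sAp{k}\subseteq\sAp{p-1}$ from $\r$ to $\r^\sharp$; appending the common suffix $\s$ (or $\t$) and any tail keeps every word an involution word, since each intermediate word lies in $\iR_*(z)$ after $\s$ (or $\t$) and hence in a single $\iR_*(z')$ after the tail. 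Splicing this prefix change around a chain witnessing $\sim'$ turns it into one witnessing $\sim$, so $\sAp{p-1}\cup\sAq{q+1}\cup\sB\Rightarrow{\sim}$.

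I expect the one-step claim to be the main obstacle: no single verification is deep, but one must match the two branches of Step~3 (the choice of $\bullet_{\mathrm{des}}$ versus $\customstar$ according to whether $\sigma_i\alpha_r=-\alpha_{r^*}$) against the two cases of Corollaries~\ref{commute-cor} and~\ref{notcommute-lem}, and confirm in each that the updated map is \emph{exactly} $v'|_{V_J}$ for the correct $v'\in\I_*$ and that the letter $\rho$ being prepended is the generator governing $v'<\rho^*\circ v'\circ\rho=v$. Everything after that---following the run, invoking Lemma~\ref{3-lem}, and the prefix change---is routine bookkeeping.
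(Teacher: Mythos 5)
Your proposal is correct and follows essentially the same route as the paper: track the sequence $\cB_0,\cB_1,\dots$, show each is realized by some $(x_i,z)$ with $x_{i+1}<r^*\circ x_{i+1}\circ r=x_i$ (which rules out invalidity and descent-periodicity, so termination without output forces redundancy), then invoke Lemma~\ref{3-lem} and repair the prefix via Theorem~\ref{suff-thm}. The only difference is that you spell out the one-step realizability claim (via Corollaries~\ref{commute-cor} and~\ref{notcommute-lem}) that the paper states as an unproved observation, which is exactly the intended justification.
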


\begin{proof} 
Let $\cB=\cB_0$, $\cB_1$, $\cB_2$, \dots be the sequence of braid systems defined in Algorithm~\ref{alg2} for the input $\cB$.
If $\cB_i$ is realized by $(x_i,z) \in \I_* \times \I_*$
and $r$ is the common first entry of the words of $\cB_{i+1}$,
then
$\cB_{i+1}$ is realized by $(x_{i+1},z)$
for the  element $x_{i+1}\in \I_*$ with $x_{i+1} < r^*\circ x_{i+1} \circ r = x_i$.
The first assertion follows from this observation.
Since each $\cB_i$ is thus realizable, it cannot occur that $\cB_i$ is invalid or descent-periodic.
Therefore, the only way  Algorithm~\ref{alg2} can terminate with no output is if some $\cB_i$ is redundant.
In this case, Lemma~\ref{3-lem} implies that for some $(r_1',\dots,r_k') \in \iR_*(y)$ we have $\sAp{p-1} \cup \sAq{q+1} \cup \sB \Rightarrow {\approx}$ where $\approx$ is the relation
$
 (r'_1,\dots,r'_k,s_1,\dots,s_n,\text{ --- }) \approx (r'_1,\dots,r'_k,t_1,\dots,t_n,\text{ --- }).
$
Since we have $\sAp{k} \cup \sB\Rightarrow {\sim_1}$ and $\sAp{k}\cup \sB\Rightarrow {\sim_2}$ for the relations 
$
  (r'_1,\dots,r'_k,s_1,\dots,s_n,\text{ --- }) \sim_1 (r_1,\dots,r_k,s_1,\dots,s_n,\text{ --- })
$
and
$
   (r'_1,\dots,r'_k,t_1,\dots,t_n,\text{ --- }) \sim_2 (r_1,\dots,r_k,t_1,\dots,t_n,\text{ --- })
$
by
Theorem~\ref{suff-thm},
and since it evidently holds that $\{ \approx, \sim_1, \sim_2\} \Rightarrow {\sim}$, it follows that
$\sAp{p-1} \cup \sAq{q+1} \cup \sB \Rightarrow {\sim}$ as desired.
\end{proof}

We refer to $(s_1,\dots,s_n,\text{ --- }) \sim (t_1,\dots,t_n,\text{ --- })$ 
as the word relation \emph{corresponding} to
a braid system 
$\cB = (\{\s,\t\}, \sigma, C)$ if $\s=(s_1,\dots,s_n)$ and $\t=(t_1,\dots,t_n)$.
Say that $\cB$ is \emph{trivial} if 
$\{\s,\t\} \subset \iR_*(z)$ for some $z \in \I_*$
and
$\sigma$ acts as the identity map on its domain.

\begin{definition}
When $s,t \in S$ are distinct and  $\F_{s,t}$ exists, 
we define $\sR_{s,t} = \sR_{s,t}(W,S,*) $ as the set of word relations corresponding to the trivial vertices in 
$\F_{s,t}$.
The elements of $\sR_{s,t}$ are all generalized half-braid relations for $(W,S,*)$,
which we refer to as the \emph{relations {induced} by $\F_{s,t}$}.
\end{definition}

Algorithms~\ref{alg1}, \ref{alg2}, and \ref{alg3} each depend on a fixed 
total order on $S$.
For each pair of generators $s,t \in S$ with $2<m(s,t)<\infty$,
choose such an order independently
and suppose Algorithm~\ref{alg3} (executed with respect to this choice)
succeeds in constructing $\F_{s,t}$, so that $\sR_{s,t}$ is defined.
Let $\isB=\isB(W,S,*)$ be the union of $\sB$ with the set of half-braid relations \eqref{folded-eq} of $(W,S,*)$.
Define   
$\sR=\sR(W,S,*)$ as the union of $\sR_{s,t}$ over all $s,t \in S$ with $2<m(s,t)<\infty$,
and let $\sR_{\min}$ be any minimal subset of $\sR$
with   $\isB \cup \sR_{\min} \Rightarrow   \sR$.
Note that $\isB \cap \sR_{\min} = \varnothing$.

\begin{theorem}\label{algwork-thm}
Assume the construction of $\sR_{\min}$ just given is defined. Each
set $\iR_*(w)$ for $w \in \I_*$ is then an equivalent class under the
 transitive relation generated by $\isB \cup \sR_{\min}$.
\end{theorem}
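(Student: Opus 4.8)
The plan is to show, by induction on $\ellhat_*(w)$, that each set $\iR_*(w)$ is a single equivalence class under the transitive relation generated by $\isB \cup \sR_{\min}$. By Theorem~\ref{suff-thm} it suffices to prove that $\isB \cup \sR_{\min} \Rightarrow \sA$, i.e.\ that every generalized half-braid relation in $\sA$ can be derived from $\isB \cup \sR_{\min}$; equivalently, since $\isB \cup \sR_{\min} \Rightarrow \sR$ by construction and $\sB \subset \isB$, it suffices to prove that $\sA$ is derivable from $\isB \cup \sR \cup \sB$. I would organize this as a double induction on the pair $(p,q)$ where an element of $\sA_{p,q}$ has $p = k+m$ and $q = m(s,t)$: the outer induction is on $p$ (the total word length), and for fixed $p$ an inner induction on $q$. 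Concretely, the goal is to show that for each relation $\sim$ in $\sA_{p,q}$ one has $\sAp{p-1} \cup \sAq{q+1} \cup \sB \cup \sR \cup \isB \Rightarrow {\sim}$; since $\sAp{p-1}$ and $\sAq{q+1}$ are handled by the induction hypotheses, this closes the argument.

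The workhorses are Lemmas~\ref{relation-implication-lem}, \ref{commute-impl-lem}, \ref{3-lem}, \ref{t-lem}, \ref{3+-lem}, and the structure of the forest $\F_{s,t}$. Given $\sim$ in $\sA_{p,q}$, say arising from words $\r\s, \r\t \in \iR_*(z)$ with $\r \in \iR_*(y)$, I would first observe (by Proposition~\ref{hb-prop}) that $\sigma = y|_{V_{\{s,t\}}}$ acts on $\alpha_s,\alpha_t$ by a bijection $\theta$ of $\{s,t\}$, so the base system $\cB_{s,t}^\theta$ is realized by $(y,z)$; thus the relation $\sim$ corresponds to the root of one tree in $\F_{s,t}$. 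Then I would trace a realizing pair $(y,z)$ down through $\F_{s,t}$: at each non-leaf vertex $\cB$ realized by $(v,z)$ with domain $J$, some $r \in \partial J \cap \DesR(v)$ exists (using that $v \ne 1$ once we are past the $p$ terms, since $\DesR(z) \supset \{s,t\}$ forces nontrivial descents coming in), and by Lemma~\ref{t-lem} the pair $(v,z)$ realizes $\cB \downarrow r$ and hence—since $\cT(\cB\downarrow r)$ is finite because $\F_{s,t}$ exists—some constant leaf of that tree; feeding that leaf to Algorithm~\ref{alg2} and invoking Lemma~\ref{3+-lem}, either the algorithm outputs a child braid system realized by $(x,z)$ with $x < v$ in Bruhat order (so we descend and recurse), or it returns no output, in which case Lemma~\ref{3+-lem} gives $\sAp{p-1}\cup\sAq{q+1}\cup\sB \Rightarrow {\sim}$ and we are done. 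Because Bruhat order is well-founded, this descent terminates, and it must terminate at a realizable leaf $\cB$ of $\F_{s,t}$. If $\cB$ is trivial, its corresponding relation lies in $\sR$, hence in the span of $\isB \cup \sR_{\min}$, and chaining the derivations along the path (each step of which is either a braid relation, a half-braid relation coming from $r\bullet_{\mathrm{des}}$ or $r\customstar$, or covered by Lemma~\ref{commute-impl-lem} in the $m(r,\cdot)=2$ case) yields $\sim$. The remaining point is that a realizable leaf of $\F_{s,t}$ whose domain $J$ has $\partial J = \varnothing$ must in fact be trivial: $\partial J = \varnothing$ means every $s' \in J\setminus S$ commutes with all of $J$, and a short argument using $\sigma = x|_{V_J}$ being root-preserving and $(x,z)$ realizing forces $\sigma$ to act as the identity on $J$ (otherwise some $\sigma\alpha_{s'} < 0$ contradicts $\Des(\cB)=\varnothing$ after Algorithm~\ref{alg2} has been run).

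The main obstacle I anticipate is the bookkeeping of \emph{why} the descent through $\F_{s,t}$ is forced to arrive at a realizable leaf rather than getting stuck at a leaf created by Algorithm~\ref{alg2} terminating with no output — one has to argue that whenever the pair $(v,z)$ genuinely realizes a vertex, Algorithm~\ref{alg2} applied to the relevant leaf of $\cT(\cB\downarrow r)$ cannot fail (it cannot detect invalidity, redundancy, or descent-periodicity on a realizable system, by the last remark in Algorithm~\ref{alg2} and the realizability argument in Lemma~\ref{3+-lem}), so it must output a child; hence \emph{if} we are tracing a genuine realizing pair, we never hit a "no output" leaf, and termination of the descent (by well-foundedness of Bruhat order, or equivalently by $\ellhat_*$ strictly dropping) lands us on an honest leaf of $\F_{s,t}$ with $\partial J = \varnothing$. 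A secondary subtlety is the interaction with the exceptional and half-braid relations: one must check that the relations collected into $\sR_{\min}$, together with $\isB$, really do span all of $\sR$ (this is true by the defining choice of $\sR_{\min}$) and that the chain of moves along the $\F_{s,t}$-path uses only relations in $\sB$, half-braid relations in $\isB$, or instances of Lemma~\ref{commute-impl-lem}; the $m(r,\cdot)\ge 3$ steps are exactly the $r\customstar$ moves and these introduce generalized half-braid relations of smaller $p$ or larger $q$, handled by induction. Once these points are pinned down, Theorem~\ref{algwork-thm} follows by assembling the inductions.
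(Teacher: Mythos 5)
Your overall strategy coincides with the paper's: a double induction on $(p,q)$, reduction via Theorem~\ref{suff-thm} to showing $\sAp{p-1}\cup\sAq{q+1}\cup\sB\cup\sR\Rightarrow{\sim}$, and a descent through the forest $\F_{s,t}$ along a realizing pair $(y,z)$ using Lemmas~\ref{t-lem} and~\ref{3+-lem}, terminating by well-foundedness of the Bruhat order. However, there is a genuine gap in the descent step. You assert that at each vertex realized by $(v,z)$ with domain $J$ and $v\neq 1$, some $r\in\partial J\cap\DesR(v)$ exists. This is false: after Algorithm~\ref{alg2} has run, $\DesR(v)$ is disjoint from $J$, but nothing prevents every descent of $v$ from lying in $S-(J\cup\partial J)$, i.e.\ commuting with all of $J$. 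In that situation your descent has no legal move $\cB\downarrow r$ and stalls at a vertex realized by a nonidentity involution, with no conclusion available. The paper handles exactly this case by invoking Lemma~\ref{relation-implication-lem}: taking $r$ to be such a descent, both equivalence classes contain words ending in the same letter $r$, so hypothesis (b) of that lemma holds and one gets $\sAp{p-1}\cup\sAq{q+1}\cup\sB\Rightarrow{\sim}$ directly. You list this lemma among your workhorses but never actually deploy it; it is needed precisely here, and the case split (descent in $\partial J$ versus descent outside $\partial J\cup J$) must be repeated at every level of the descent.

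Two further problems occur in your endgame. First, your ``main obstacle'' paragraph claims Algorithm~\ref{alg2} cannot detect redundancy on a realizable system; this is false. Lemma~\ref{3+-lem} only rules out invalidity and descent-periodicity for realizable inputs, and redundancy of a realizable system is exactly the situation where Lemma~\ref{3-lem} reduces ${\sim}$ to $\sAp{p-1}\cup\sAq{q+1}\cup\sB$. Your earlier paragraph already handles the ``no output'' branch correctly, so this is an internal inconsistency rather than a fatal error, but the conclusion ``we never hit a no-output leaf'' should be withdrawn. Second, your justification that the terminal vertex is trivial is misattributed: $\partial J=\varnothing$ together with root-preservation does not force $\sigma=\id|_{V_J}$, since a vertex realized by some $x\neq 1$ all of whose descents commute with $J$ can have $\Des(\cB)=\varnothing$ while $x|_{V_J}$ is a nontrivial map. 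What actually makes the terminal vertex trivial is that the realizing involution has strictly decreased to the identity along the chain $y=y_0>y_1>\cdots$; once $y_j=1$ the map $\sigma_j=1|_{V_{J_j}}$ is the identity and the two words lie in $\iR_*(z)$, so the corresponding relation belongs to $\sR$ — and this vertex need not be a leaf of $\F_{s,t}$ at all. With these repairs the argument closes as in the paper.
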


\begin{proof}
The relations in $\isB \cup \sR_{\min}$  preserve each set $\iR_*(w)$ for $w \in \I_*$ by definition.
Choose $r_1,\dots,r_k, s,t \in S$ and $m \in \PP$  such that the relation ${\sim}$
defined by \eqref{suff-eq} is contained in 
 $ \sA_{p,q}$
for $p=k+m$ and $q=m(s,t)$.
We may assume by induction that $\isB \cup \sR_{\min}\Rightarrow \sAp{p-1} \cup \sAq{q+1}$.
By Theorem~\ref{suff-thm}, it suffices to show  that $\isB \cup \sR_{\min}\Rightarrow {\sim}$.
If $k=0$ then ${\sim}$ is a half-braid relation in $\isB$.
On the other hand, if $k>0$ and $q=2$, then  Lemma~\ref{commute-impl-lem} implies that 
$\sAp{p-1} \cup \sAq{q+1} \cup \sB \Rightarrow{\sim}$.

Assume that $k>0$ and $2<q<\infty$.
By Proposition~\ref{hb-prop}
there are unique
elements $y,z \in \I_*$ such that $w\mapsto (ywy^{-1})^*$ is a bijection $\theta : \{s,t\} \to \{s,t\}$
and $(y,z)$ realizes the root vertex $\cB^\theta_{s,t}$ in $\F_{s,t}$.
We claim that either $\sAp{p-1} \cup \sAq{q+1}\cup \sB \Rightarrow {\sim}$
or 
$(1,z)$ realizes 
some descendant of $\cB^\theta_{s,t}$ in $\F_{s,t}$.
This alternative implies the theorem since in the first case it follows that
$\isB \cup \sR_{\min} \Rightarrow \sAp{p-1} \cup \sAq{q+1}\cup \isB \Rightarrow {\sim}$,
while in the second case   
$\sR$ contains a relation $\approx$ of the form
\be\label{proof-ref}
 (r'_1,\dots,r'_k,\underbrace{\dots,s,t,s}_{m\text{ terms}},\text{ --- }) \approx (r'_1,\dots,r'_k,\underbrace{\dots,t,s,t}_{m\text{ terms}},\text{ --- })
 \ee
 for some $(r_1',\dots,r_k') \in \iR_*(y)$, and 
when this occurs we can deduce that $\isB \cup \sR_{\min}\Rightarrow \isB \cup \sR \Rightarrow \{{\approx}\} \cup \sAp{k} \cup \sB  \Rightarrow{\sim}$ by the argument in the proof of Lemma~\ref{3+-lem}.

To prove the claim, let $y_0 = y<z$, $\cB_0=\cB_{s,t}^\theta$, and $J_0 =\{s,t\}$.
If $y_0=1$ then the claim holds immediately.  By construction $\DesR(y_0)$ is disjoint from $J_0$. 
If $y_0$ has a right decent  $r \in S - \partial J_0$, then
Lemma~\ref{relation-implication-lem} implies that
$\sAp{p-1} \cup \sAq{q+1}\cup \sB \Rightarrow {\sim}$.
If $y_0$ has a right descent $r \in \partial J_0$, then by Lemma~\ref{t-lem}
there exists $y_0' \in \I_*$ with $y_0' \leq y_0$
and a leaf vertex $\cB'_0$ in $\cT(\cB_0 \downarrow r)$ realized by $(y_0',z)$.
If Algorithm~\ref{alg2} applied to $\cB'_0$ has no output, 
then Lemma~\ref{3+-lem} implies that
for some $(r_1'\dots,r_k') \in \iR_*(y)$ we have $\sAp{p-1} \cup \sAq{q+1} \cup \isB \Rightarrow {\approx}$ where $\approx$ is the word relation \eqref{proof-ref},
in which case $ \isB \cup \isB^+ \Rightarrow \{{\sim}\}$.
If Algorithm~\ref{alg2} applied to $\cB'_0$ terminates with output $\cB_1$, then
$\cB_1$ is realized by $(y_1,z)$ for some twisted involution $y_1\leq y_0'$ by Lemma~\ref{3+-lem}.
When this occurs, define $J_1 = J_0 \cup \{r\}$ so that $\cB_1$ has domain $J_1$
and both words of $\cB_1$ involve only elements of $J_1$.
Then $\cB_1$ is a child of $\cB_0$ in $\F_{s,t}$ which is realized by $(y_1,z)$, and it necessarily holds that $y_1<y_0$.

By repeating the argument above, we deduce that either (1) $y_1=1$,  (2) $\sAp{p-1} \cup \sAq{q+1}\cup \isB \Rightarrow {\sim}$,
or (3) there exists a child $\cB_2$ of $\cB_1 $ in $\F_{s,t}$, with domain $J_2 \supset J_1$, whose words both involve only elements of $J_2$, and which is realized by $(y_2,z)$ for a twisted involution
$y_2 < y_1<y_0=y$.
Continuing in this way, we deduce by induction that our claim can only fail if there exists an infinite descending sequence of twisted involutions $y=y_0>y_1>y_2>\dots$, which is impossible.
\end{proof}

We have implemented Algorithm~\ref{alg3} in a few thousand lines of Python code \cite{code}.
This implementation successfully computes $\sR_{\min}$
whenever $(W,S)$ is a finite or affine Coxeter system of not too large rank. 
In type $\tilde E_8$, for example, our code calculates $\sR_{\min}$ 
in about twenty minutes.

\section{Reductions}
\label{red-sect}

Here, we discuss how one can efficiently reuse the output of Algorithm~\ref{alg3}.
A \emph{morphism} of twisted Coxeter systems $\phi : (W,S,*) \to (W',S',\diamond)$ is
a group homomorphism $\phi : W \to W'$ with $\phi(S) \subset \{1\} \cup S'$ and $\phi(w^*) = \phi(w)^\diamond$ for all $w \in W$.
If a morphism $\phi$ is injective,
then it 
 induces a natural map $\sA(W,S,*) \to \sA(W',S',\diamond)$
between the corresponding sets of generalized half-braid relations \eqref{gen-rel-eq}: this is defined by applying $\phi$ to 
the first $k+m$ terms in a given relation \eqref{suff-eq}.
Given  $J\subset  S$, write  $\partial_K J$
for the set of $s \in K-J$ such that $m(s,t) > 2$ for some $t \in J$.

\begin{definition}
A \emph{bounded Coxeter system} is a quadruple $(W,S,J,*)$ where $(W,S,*)$ is a twisted Coxeter system,
 $J=J^*\subset S$, and $ \partial_S J \cup J  \neq S$.
We say that 
$ \phi : (W,S,J,*) \to (W',S',\diamond)$
is a \emph{bounded embedding} 
 if $\phi$ is
an injective morphism   $\phi : (W,S,*) \to (W',S',\diamond)$
and
$\partial_{S'} \phi(J) = \phi\( \partial_S J\)$.
\end{definition}

Suppose  $(W,S,J,*)$
 is a bounded Coxeter system and $s,t \in S$. Let $V$ be the geometric representation of $(W,S)$ and
 fix a total order on $S$.
We call $(W,S,J,* \hs|\hs s,t)$ a \emph{parabolic configuration} if 
 Algorithm~\ref{alg3} succeeds in constructing  $\F_{s,t}=\F_{s,t}( W, S,*)$,
and for each vertex $\cB=(\{\s,\t\},\sigma,\varnothing)$ in $\F_{s,t}$ it holds that $(\partial_S K \cup K)\subset J$ and $\sigma(V_K)\subset V_J$ where $K$ is the domain of $\cB$.

\begin{theorem}\label{bounded-thm}
Let  $\phi : S \to S'$ be an order-preserving map between totally ordered finite sets,
choose distinct $s,t \in S$, and set $s'= \phi(s)$ and $t'=\phi(t)$. Suppose
$(W',S',\diamond)$ is a twisted Coxeter system,
 $(W,S,J,* \hs|\hs s,t)$ 
is a parabolic configuration,
and
$\phi$ extends to a bounded embedding $ (W,S,J,*) \to (W',S',\diamond)$. 
The forests $\F_{s,t}(W,S,*)$ and $\F_{s',t'}(W',S',\diamond)$  then both exist,
and $\phi$ induces a bijection between the sets of 
 generalized half-braid relations $\sR_{s,t}(W,S,*)$ and $\sR_{s',t'}(W',S',\diamond)$  which each induces.
Moreover, $(W',S',\phi(J),\diamond\hs|\hs,s',t')$ is also a parabolic system.
\end{theorem}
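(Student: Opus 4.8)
The plan is to show that the bounded embedding $\phi$ carries the entire construction of $\F_{s,t}(W,S,*)$ forward to an isomorphic construction of $\F_{s',t'}(W',S',\diamond)$, vertex by vertex, and that the ``boundedness'' hypotheses are exactly what is needed to guarantee the two executions of Algorithm~\ref{alg3} never diverge. First I would set up notation: for a braid system $\cB = (\{\s,\t\},\sigma,C)$ for $(W,S,*)$ with domain $K\subset S$, define $\phi_*\cB = (\{\phi\s,\phi\t\},\phi\circ\sigma,\phi(C))$, where $\phi\s$ applies $\phi$ entrywise, $\phi(C)$ applies the linear isomorphism $V_J\xrightarrow{\ \sim\ }V_{\phi(J)}$ induced by $\phi$ to the polynomial-coefficient identities and inequalities in $C$, and one uses that $\phi$ intertwines the $W$-action on $V$ with the $W'$-action on $V'$ (because $\phi$ is a morphism of Coxeter systems, hence $\phi(w)\phi(\alpha_s) = \phi(w\alpha_s)$ and $\phi$ preserves $(\cdot,\cdot)$ on the span of the $\alpha_s$, $s\in S$). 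The key point is that the parabolic-configuration hypothesis guarantees $\sigma(V_K)\subset V_J$ and $\partial_S K\cup K\subset J$ for \emph{every} vertex $\cB$ in $\F_{s,t}(W,S,*)$, so $\phi$ restricted to $V_J$ is always available to push $\sigma$ and $C$ forward without leaving the image of $\phi$.

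Next I would verify compatibility of $\phi_*$ with each of the five operations on braid systems and with the three algorithms. The crucial facts to check are: (i) $\phi$ commutes with the descent operations $r\bullet_{\mathrm{des}}$, $r\customstar$, and $r\bullet_{\mathrm{asc}}$, because $\phi(r^*) = \phi(r)^\diamond$ and $\phi$ sends $-\alpha_{r^*}$ to $-\alpha_{\phi(r)^\diamond}$; (ii) $\phi$ commutes with the expansion operation $\cB\downarrow r$ for $r\in\partial_S K$ --- here the bounded-embedding condition $\partial_{S'}\phi(J) = \phi(\partial_S J)$, combined with $\partial_S K\subset J$, forces $\partial_{S'}\phi(K) = \phi(\partial_S K)$, so the new free variable introduced in $(\cB\downarrow r)'$ on the $(W,S,*)$ side corresponds exactly to the one introduced by $(\phi_*\cB)\downarrow\phi(r)$ on the $(W',S',\diamond)$ side, and the linear constraints $(\sigma'\alpha_r,\sigma'\alpha_s) = (\alpha_r,\alpha_s)$ together with the $|\det| = 1$ constraint (which triggers only when $\{r\}\cup K = S$, hence never inside a parabolic configuration since $\partial_S J\cup J\neq S$) are preserved under the $V_J\xrightarrow{\sim}V_{\phi(J)}$ isomorphism; (iii) a ring homomorphism $\psi\colon\RR[S]\to\RR$ is a solution of $\cB$ if and only if the corresponding $\psi'\colon\RR[S']\to\RR$ (agreeing with $\psi$ via $\phi$ on the relevant variables, arbitrary outside) is a solution of $\phi_*\cB$, because the root-preserving condition on $(1\otimes\psi)\circ\sigma$ only sees coordinates in $V_J$ and $\phi|_{V_J}$ preserves the sign of a vector's coefficients; thus the solution sets are in bijection and in particular one is finite iff the other is. Similarly, $\cB$ is valid/invalid, trivial, constant, redundant, or descent-periodic iff $\phi_*\cB$ is --- validity and redundancy because reducedness of $\phi\s$ follows from reducedness of $\s$ via injectivity of $\phi$ together with $\partial_{S'}\phi(K) = \phi(\partial_S K)$ (so no ``new'' braid moves appear in $W'$), and descent-periodicity because the quasi-polynomial data $\lambda_j(n)$ transport verbatim through the linear isomorphism.

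With these compatibilities in hand, the proof is a simultaneous induction on the depth of the tree/forest: the root systems $\cB^\theta_{s,t}$ map to $\cB^{\theta'}_{s',t'}$ (noting $m(s,t) = m(s',t')$ since $\phi$ is a Coxeter-system morphism, and $\theta$ transports to the bijection $\theta'$ on $\{s',t'\}$), and by the operation-compatibility each child produced by Algorithm~\ref{alg1} or \ref{alg2} on one side corresponds to a child on the other; hence $\F_{s,t}(W,S,*)$ exists if and only if $\F_{s',t'}(W',S',\diamond)$ does, and $\phi_*$ is a tree isomorphism between them. Restricting to trivial vertices gives the claimed bijection $\sR_{s,t}(W,S,*)\xrightarrow{\sim}\sR_{s',t'}(W',S',\diamond)$, and it agrees with the induced map on generalized half-braid relations described before the definition of bounded embedding because that map is also ``apply $\phi$ to the first $k+m$ terms.'' Finally, to see that $(W',S',\phi(J),\diamond\hs|\hs s',t')$ is again a parabolic configuration: $(W',S',\phi(J),\diamond)$ is a bounded Coxeter system since $\phi(J) = \phi(J)^\diamond$ and $\partial_{S'}\phi(J)\cup\phi(J) = \phi(\partial_S J\cup J)$ is a proper subset of $S'$ (it cannot equal $S'$ because $\phi(\partial_S J\cup J)$ is a proper subset of $\phi(S)$, hence of $S'$, using $\partial_S J\cup J\subsetneq S$); and for each vertex $\phi_*\cB$ of $\F_{s',t'}(W',S',\diamond)$ with domain $\phi(K)$ we have $\partial_{S'}\phi(K)\cup\phi(K) = \phi(\partial_S K\cup K)\subset\phi(J)$ and $(\phi\circ\sigma)(V_{\phi(K)}) = \phi(\sigma(V_K))\subset\phi(V_J) = V_{\phi(J)}$, as required.

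The main obstacle I expect is step (ii): making precise that $\partial$ (the ``interior boundary'' of a domain) is transported correctly, i.e. that the bounded-embedding condition $\partial_{S'}\phi(J) = \phi(\partial_S J)$ really does propagate to $\partial_{S'}\phi(K) = \phi(\partial_S K)$ for every domain $K$ encountered in $\F_{s,t}$ --- one must use that every such $K$ satisfies $K\subset J$ \emph{and} $\partial_S K\subset J$ (guaranteed by the parabolic-configuration hypothesis) so that the Coxeter-diagram neighbors of $K$ inside $S'$ cannot escape $\phi(J)$, where $\phi$ already controls the diagram. Handling the $|\det\sigma| = 1$ constraint cleanly (by observing it is never activated in a parabolic configuration) and the bookkeeping of which polynomial variables on the $S'$ side are ``free but irrelevant'' are the other fiddly points, but these are routine once the $\partial$-transport is nailed down.
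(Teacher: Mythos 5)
Your proposal is correct and follows essentially the same route as the paper: reduce to the case of an inclusion $S \subset S'$ and show by induction through Algorithms~\ref{alg1}--\ref{alg3} that the two forests are built identically, with the bounded-embedding condition controlling the transport of $\partial$ and with the extra variables $x_v$ for $v \in S'-S$ shown to be harmless. The only place your phrasing is looser than the paper's is the claim that the solution sets are ``in bijection'' (literally false, since the extra variables are unconstrained copies); the paper instead records that these variables always occur with the same coefficients and the same constraints as the single variable $x_u$ for a fixed $u \in S - (\partial_S J \cup J)$, which is the precise version of your ``free but irrelevant'' remark.
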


\begin{proof}
We may assume   that $S \subset S'$, $W= W'_S$, $* = \diamond|_W$, and that $\phi$ is the inclusion map $W\hookrightarrow W'$, so $s=s'$ and $t=t'$ and $\partial_S J = \partial_{S'}J$.
Let $V$ be the geometric representation of $(W',S')$. 
By definition,  $\F= \F_{s,t}(W,S,*)$ exists and   $K\subset J$ and $\sigma(V_K)\subset V_J$ 
for each vertex $\cB=(\{\s,\t\},\sigma,\varnothing)$ with domain $K$. 
Each vertex in $\F$ may be regarded as braid system relative to  $(W',S',\diamond)$,
and it suffices to show that under this identification $\F= \F_{s',t'}(W',S',\diamond)$.

We first examine Algorithm~\ref{alg1}.
Suppose $\cB_0$ is a non-leaf vertex of $\F$ with domain $K$. Note that $ \partial_S K = \partial_{S'} K  \subset J$.
Regard $\cB_0$ as a braid system relative to $(W,S,*)$,
and let $\cB'_0$ consist of the same data, but considered as a braid system relative to $(W',S',\diamond)$.
Choose $r \in \partial_S K$
and define braid systems $\cB=\cB_0\downarrow r $ and $\cB' =\cB'_0\downarrow r$
for $(W,S,*)$ and $(W',S',\diamond)$.
Let $\cT = \cT(\cB)$ and
$\cT'=\cT(\cB')$ be the outputs  of Algorithm~\ref{alg1} applied to $\cB$ and $\cB'$.
Fix $u \in S - (\partial_S J \cup J)$, 
and let $\cA =(\{\s,\t\}, \sigma, C)$ and $\cA' =(\{\s',\t'\}, \sigma', C')$ be vertices of $\cT$ and $\cT'$.
The following properties hold by induction,
since all elements of $\{r\} \cup K \subset J$ and $S'-(\partial_S J\cup J)$ commute with each other:
\ben
\item[(1)]
Let 
$U = V_{S-\{u\}} \otimes_\RR \RR[S-\{u\}]$ and define 
$ \Upsilon = \alpha_u \otimes_\RR x_u$
and 
$\Upsilon' = \alpha_u \otimes_\RR x_u + \sum_{v \in S' - S} \alpha_{v}\otimes_\RR x_{v}$.
 The images of $\sigma$ and $\sigma'$ are contained in the sets
 $U \oplus \RR\Upsilon$  and $ U \oplus \RR\Upsilon'$.

\item[(2)] The set $C$ may be partitioned into a set of constraints which do not involve the variable $x_u$ at all,
and a set of constraints which specify that this variable is zero, nonnegative, and/or nonpositive.
The set $C'$ may likewise be partitioned into a set of constraints which do not involve any of the variables $x_u$ or $x_{v}$ for $v \in S'-S$,
and a set of constraints which specify that all of these variables are simultaneously zero, nonnegative, and/or nonpositive.
\een
These observations make the following operation well-defined.
Let $\varepsilon : U \oplus \RR \Upsilon \to U\oplus \RR \Upsilon'$ be the linear map which is the identity on $U$ and which maps $\Upsilon \mapsto \Upsilon'$.
For each vertex $\cA= (\{\s,\t\}, \sigma, C) $ in $ \cT$,
define $\lambda(\cA)$ as the braid system given by 
replacing $\sigma$ with $\varepsilon \sigma$, and then adding to each constraint in $C$ of the form $x_u =0$ or $x_u \leq 0$ or $x_u \geq 0$  the analogous identity for  $x_{v}$ for all $v  \in S'-S$.
Regard $\lambda(\cA)$ as a braid system for $(W',S',\diamond)$ and define $\lambda(\cT)$ as the tree given by applying $\lambda$ to each vertex of $\cT$.
It holds by definition that $\lambda(\cB) = \cB'$. Using properties (1) and (2), it is straightforward to deduce 
 by induction that in fact
 $\lambda(\cT) = \cT'$.
 
%

The leaf vertices in $\cT  = \cT(\cB_0 \downarrow r)$ which are not invalid or redundant may be trivially identified with the leaf vertices in $\lambda(\cT)$ which are not invalid or redundant. 
Since Algorithm~\ref{alg2} has no dependence on the ambient Coxeter system, 
 the children of $\cB_0$ in $\F$ may be identified with the children of $\cB_0'$ in
 $\F_{s,t}(W',S',\diamond)$.
By induction,
we have $\F=\F_{s,t}(W',S',\diamond)$ as needed.
\end{proof}

\begin{table}[h]
\begin{center}
   \begin{tabular}{|c | c | c | c |}
    \hline
    Type &  Coxeter Diagram &     Type &  Coxeter Diagram  \\ 
     \hline
   $ A_n$ &  \andiagram  &    $\tilde A_n$ &  \atildediagram \\
        \hline 
   $ B_n$ &  \bndiagram  &    $\tilde B_n$ &  \btildediagram \\
        \hline
$ C_n$ &  \bndiagram &      $\tilde C_n$ &  \ctildediagram    \\
        \hline 
   $ D_n$ &  \dndiagram  &    $\tilde D_n$ &  \dtildediagram        \\         \hline     
  \end{tabular}
\end{center}
\caption{Coxeter systems of classical type. Single (double) edges have weight three (four).}\label{affine-tbl}
\end{table}

\begin{example}\label{atilde-ex}
Suppose $(\tilde W,\tilde S)$ is the Coxeter system of type $\tilde A_n$ ($n\geq 2$) with Coxeter diagram
as in Table~\ref{affine-tbl},
so that $\tilde S = \{s_0,s_1,\dots, s_n\}$. 
Let $(W,S)$ be the Coxeter system of type $A_8$ and define $J = \{s_2,s_3,\dots,s_7\}$ and $(s,t)=(s_4,s_5)$.
We have checked that $(W,S,J,\id \hs|\hs s,t)$ is a parabolic configuration using \cite{code}.
The corresponding forest $\F_{s,t}$ induces no word relations  independent of the half-braid relations in type $A_8$.
When $n>8$,
there exists a bounded embedding $\phi : (W,S,J,\id ) \to (\tilde W, \tilde S, \id) $
with $\phi(s) =  s'$ and $\phi(t) =  t'$
for any $ s',  t' \in \tilde S$  with $m( s',  t') > 2$.
It follows by Theorems~\ref{algwork-thm} and \ref{bounded-thm}
that  $(\tilde W, \tilde S, \id)$  is \emph{perfectly braided} in the sense of Corollary~\ref{main-cor} when $n>8$.
When $n\leq 8$, the same conclusion can be checked directly using \cite{code}. 
\end{example}

Let $A_n$ ($n\geq 1$), $B_n$ $(n\geq 2)$, and $D_n$ ($n\geq 4$) 
refer to the Coxeter systems $(W,S)$ in which $S=\{s_1,s_2,\dots, s_n\}$
and the corresponding Coxeter diagrams are as in Table~\ref{affine-tbl}.
When $(W,S)$ is a Coxeter system with $S = \{s_1,s_2,\dots,s_n\}$, 
 let $(W,S)\times (W,S)$ denote the Coxeter system
whose  simple generators 
are $\{ s_1,s_2\dots, s_n\} \sqcup \{ \tilde s_1,\tilde s_2,\dots, \tilde s_n\}$,
in which   $m(s_i, s_j) = m(\tilde s_i, \tilde s_j)$ and $m(s_i, \tilde s_j) = 2$ for all $i,j \in [n]$.
We write $^2(W\times W)$ to denote the twisted Coxeter system
given by $(W,S)\times (W,S)$ with the involution $*$
satisfying  $s_i^* = \tilde s_i$ for $i \in [n]$.
When $(W,S)$
has type $A_n$, $B_n$, or $D_n$, we write
$^2(A_n \times A_n)$, $^2(B_n\times B_n)$, and $^2(D_n\times D_n)$ in place of $^2(W\times W)$.
Order the generators of the product system 
as $s_1 < s_2<\dots <s_n < \tilde s_1<\tilde s_2 < \dots < \tilde s_n$.
The following describes several common parabolic configurations. 
\ben
\item Define the configuration of type $A_n$ ($n\geq 5$), $B_n$ ($n\geq 5$),
and $D_n$ ($n \geq 7$)
 to be $(W,S,J,*\hs|\hs s,t)$ where $(W,S)$
is the Coxeter system of type $A_n$, $B_n$ or $D_n$, respectively; 
 $*=\id$; $\{s,t\} = \{s_4,s_5\}$;
and
$J$ is either $\{ s_2,s_3,s_4,s_5\} $ in types $A_5$ and $B_5$,
$\{ s_2,s_3,s_4,s_5,s_6\}$ in types $A_6$ and $B_6$,
$\{ s_2,s_3,s_4,s_5,s_6,s_7,s_8\} $ in type $D_8$,
or
$\{ s_2,s_3,s_4,s_5,s_6,s_7\} $ in the other cases.

\item[2.] Define the configurations of types $^2A_n$ ($n\geq 8$) and $^2D_n$ ($n\geq 7$)
exactly as in types $A_n$ and $D_n$,
but let $*$ be the unique nontrivial automorphism of the corresponding Coxeter diagram.

\item[3.] 
Let $X$ be the letter $A$, $B$, or $D$. Fix $n\in \PP$  such that the parabolic configuration  
$(W,S,J,\id\hs|\hs s,t)$ of type $X_n$
is defined. 
The configuration of type $^2(X_n\times X_n)$ is then $(W',S',J',\diamond,\hs|\hs s,t)$
where 
 $(W',S', \diamond)$ has type $^2(X_n\times X_n)$,
 $J' = J \sqcup J^\diamond$, and $\{s,t\} = \{s_4,s_5\}$.
\item[4.]
Define the configurations of types $D_7'$, $^2D_7'$, and $^2(D_7\times D_7)'$
in the same way as $D_7$, $^2D_7$, and $^2(D_7\times D_7)$,
but in each case let $\{s,t\}$ be $\{s_5,s_6\}$ instead of $\{s_4,s_5\}$.
\een

\begin{proposition}\label{par-prop}
Relative to the implementation \cite{code} of Algorithm~\ref{alg3},
the configurations of types 
$A_n$, 
$B_n$, 
$D_n$,  
$^2A_n$, 
$^2D_n$, 
$^2(A_n\times A_n)$, 
$^2(B_n \times B_n)$, 
$^2(D_n\times D_n)$, 
$D_7'$, $^2D_7'$, and $^2(D_7\times D_7)'$ defined above are all parabolic.
Suppose $(W,S,J,*\hs|\hs s,t)$ is one of these parabolic configurations,
and consider the word relations $\sR_{s,t}$ induced by the corresponding forest $\F_{s,t}$.
In types $^2A_9$, $B_5$, $D_7$, $D_7'$, $^2D_7$, and $^2D_7'$, 
there exists a single relation ${\sim} \in \sR_{s,t}$ such that $\isB \cup \{\sim\} \Rightarrow \sR_{s,t}$;
Table~\ref{parabolic-table} shows possible values for this relation.
 In all of the other types, it holds that $\isB \Rightarrow \sR_{s,t}$.
In either case, we have $ \isB \cup \isB^+ \Rightarrow \sR_{s,t}$
with $\isB^+ = \isB^+(W,S,*)$  as in Theorem~\ref{main-thm}.
\end{proposition}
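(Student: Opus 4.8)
The plan is to verify the claimed statement by direct computation using the implementation \cite{code} of Algorithm~\ref{alg3}, followed by a bookkeeping argument that the finitely many induced relations collapse, modulo $\isB$, to at most one relation per configuration and ultimately into the exceptional relations $\isB^+$. First I would run Algorithm~\ref{alg3} on each of the eleven listed configurations $(W,S,J,*\hs|\hs s,t)$ and confirm two things: that the construction of $\F_{s,t}$ succeeds (so $\sR_{s,t}$ is defined), and that for every vertex $\cB$ in $\F_{s,t}$ with domain $K$ one has $\partial_S K \cup K \subset J$ and $\sigma(V_K) \subset V_J$. This is exactly the definition of a parabolic configuration, so this step establishes the first assertion. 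The only subtlety is that the configurations are chosen with $J$ deliberately large enough that no vertex's domain ``escapes'' past $J$; the correct choices of $J$ (e.g. $\{s_2,\dots,s_7\}$ in most $A_n$ and $D_n$ cases, with the tweaks in $A_6,B_6,D_7',D_8$) are precisely what is needed, and checking them is a finite machine computation.

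Next, for each configuration I would read off the set $\sR_{s,t}$ of word relations attached to trivial vertices of $\F_{s,t}$, and then reduce: compute the equivalence classes these relations generate modulo $\isB = \sB \cup \{\text{half-braid relations}\}$. In the ``generic'' configurations the output is that every trivial vertex already corresponds to a relation implied by $\isB$, so $\isB \Rightarrow \sR_{s,t}$. In the six special types $^2A_9, B_5, D_7, D_7', {}^2D_7, {}^2D_7'$ there is genuinely one new relation surviving modulo $\isB$, and I would exhibit it explicitly (Table~\ref{parabolic-table}) and check by the reduction relation $\Rightarrow$ that $\isB$ together with this single relation spans all of $\sR_{s,t}$. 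This amounts to a finite search over the relations in $\sR_{s,t}$ and can again be done with \cite{code}; the content is simply that $\sR_{s,t}$, which a priori could be large, is in fact very small after quotienting by braid and half-braid moves.

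The final clause, $\isB \cup \isB^+ \Rightarrow \sR_{s,t}$, requires matching the surviving relation in each special type against the appropriate exceptional braid relation. The key observation is that the six special configurations each contain an induced copy of $^2A_3$ ($B_3$, $H_3$, or $D_4$ never arises with $*=\id$ inside pure $A$/$B$/$D$ diagrams — actually $B_3$ arises inside $B_n$ and $D_4$ inside $D_n$), and the relation recorded in Table~\ref{parabolic-table} is, modulo $\isB$, a consequence of the exceptional braid relation of that induced subgraph. Concretely one conjugates the exceptional relation of $^2A_3$ (resp. $B_3$, $D_4$) — which is stated ``somewhat arbitrarily'' in the introduction — by a sequence of ordinary and half-braid moves to bring it into the form appearing in $\sR_{s,t}$; this is a short explicit computation using the associativity of $\circ$ and Theorem~\ref{suff-thm} to guarantee the intermediate words are all involution words. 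Since in the generic case $\isB \Rightarrow \sR_{s,t}$ already, the conclusion $\isB \cup \isB^+ \Rightarrow \sR_{s,t}$ holds there trivially.

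The main obstacle is purely the verification burden: one must trust (and in the paper, document) that the linear-programming subroutines in Algorithm~\ref{alg1}, the descent-elimination loop in Algorithm~\ref{alg2}, and the descent-periodicity test all terminate and return correct answers on these particular inputs. Mathematically the only non-mechanical point is the last paragraph's identification of the surviving relation with an exceptional braid relation; I expect that to be a case-by-case hand calculation — tedious but routine — whose correctness is ultimately underwritten by Theorem~\ref{suff-thm}, which tells us that any word relation between two involution words of a common $z \in \I_*$ is automatically spanned by generalized half-braid relations, so it suffices to produce \emph{some} chain of moves rather than to be clever about it. I would relegate the explicit chains to the appendix (Section~\ref{app-sect}) and in the body simply assert the reductions with a pointer to \cite{code}.
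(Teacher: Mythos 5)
Your overall strategy (machine verification of parabolicity and of $\sR_{s,t}$, then a hand check that the surviving relations are implied by $\isB\cup\isB^+$) matches the paper's, but there is one genuine gap: you treat the statement as a verification of ``eleven configurations,'' when eight of the eleven items are infinite families in $n$ (the configurations of type $A_n$, $B_n$, $D_n$, $^2A_n$, $^2D_n$ and the three product types are defined for all sufficiently large $n$). No finite run of Algorithm~\ref{alg3} can establish parabolicity, or compute $\sR_{s,t}$, for every $n$ simultaneously, so your first step ``run Algorithm~\ref{alg3} on each of the eleven listed configurations and confirm\ldots'' does not by itself prove the proposition. The paper closes exactly this gap with Theorem~\ref{bounded-thm}: one checks the base cases by computer ($n\leq 16$ in type $^2A_n$, $n\leq 8$ otherwise), observes that the base configuration admits a bounded embedding into the rank-$n$ system for larger $n$ (the sets $J$ are chosen precisely so that $\partial_{S'}\phi(J)=\phi(\partial_S J)$), and then invokes the last sentence of Theorem~\ref{bounded-thm} to conclude that the image is again a parabolic configuration with the \emph{same} forest and hence the same induced relations. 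Without appealing to this stability result (or proving an equivalent one), your argument only covers finitely many values of $n$.

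The remainder of your proposal is consistent with the paper. The final clause is handled in the paper by directly checking that each relation in Table~\ref{parabolic-table} follows from $\isB\cup\isB^+$, which is the same case-by-case matching against the exceptional relations of the induced $^2A_3$, $B_3$, and $D_4$ subgraphs that you describe (your aside about which induced subgraphs can occur is slightly garbled but lands in the right place: $B_3$ accounts for $B_5$, $D_4$ for $D_7$ and $D_7'$, and $^2A_3$ for $^2A_9$, $^2D_7$, $^2D_7'$; $H_3$ never occurs here). Once you add the reduction to finitely many base cases via Theorem~\ref{bounded-thm}, the proof is essentially the paper's.
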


\begin{proof}
We have checked (a) and (b) by computer \cite{code}
in type $^2A_n$ when $n \leq 16$ and in the other types when $n\leq 8$. In any other type the desired result follows by Theorem~\ref{bounded-thm}.
The last assertion follows by checking that each relation ${\sim}$ in Table~\ref{parabolic-table}
is implied by $\isB \cup \isB^+$, which is straightforward.
\end{proof}

\begin{table}
\begin{center}
  \begin{tabular}{| c | c |}
    \hline
    Type &  Relation \\ \hline
   $^2A_{9}$ &  $(5,6,4,5,\text{ --- }) \sim (5,6,5,4,\text{ --- })$  \\       
   $B_5$ &  $(5,4,3,4,5,4,\text{ --- }) \sim (5,4,3,5,4,5,\text{ --- })$  \\       
    $D_7$ &   $(6, 4, 5, 7, 5, 6, 4, 5,\text{ --- }) \sim (6, 4, 5, 7, 5, 6, 5, 4,\text{ --- })$  \\       
    $D_7'$ &  $(6, 4, 5, 7, 5, 4, 5, 6,\text{ --- }) \sim (6, 4, 5, 7, 5, 4, 6, 5,\text{ --- })$  \\    
     $^2D_7$ &  $(6, 5, 4, 3, 7, 4, 5, 6, 5, 7, 4, 5,\text{ --- }) \sim (6, 5, 4, 3, 7, 4, 5, 6, 5, 7, 5, 4,\text{ --- })$   \\       
    $^2D_7'$ &   $(5, 7, 5, 6,\text{ --- }) \sim  (5, 7, 6, 5,\text{ --- })$  \\  \hline     
  \end{tabular}
\end{center}

\caption{Values for the word relation ${\sim} \in \sR_{s,t}$ in Proposition~\ref{par-prop}.}\label{parabolic-table}
\end{table}

\section{Proof of Theorem~\ref{main-thm}}\label{app-sect}

A twisted Coxeter system $(W,S,*)$
 is \emph{irreducible} if $*$ acts transitively on the connected components of its Coxeter 
diagram $\Gamma$.
If $J=K\cup K^*$ where $K\subset S$ is the set of vertices in a
connected component of $\Gamma$, then $(W_J, J, *)$ is an \emph{irreducible component} of $(W,S,*)$.
The group $W$ is the direct product of the parabolic subgroups corresponding to its irreducible components.
If $J =J^*\subset S$,
then every word in $\iR_*(z)$ for $z \in \I_* (W_J)= \I_* \cap W_J$ 
is a sequence of elements of $J$ \cite[Proposition 2.10]{HMP2}.
It is straightforward to deduce from this property that Theorem~\ref{main-thm}
holds if and only the result holds for each irreducible twisted Coxeter system of finite or affine type.
If $(W,S,*)$ is irreducible but $\Gamma$ is not connected, Theorem~\ref{main-thm} reduces to the following:

\begin{proposition}\label{wxw-prop}
A twisted Coxeter system of type $^2(W\times W)$ is perfectly braided.
\end{proposition}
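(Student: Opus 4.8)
The plan is to describe the involution words of $^2(W\times W)$ explicitly and then check that the braid and half-braid relations already act transitively on each set $\iR_*(z)$; this will work for an arbitrary Coxeter group $W$, with no finiteness assumption. Write the generators of $(W,S)\times(W,S)$ as $s_1,\dots,s_n,\tilde s_1,\dots,\tilde s_n$ with $s_i^*=\tilde s_i$. Since $(a,b)^*=(b,a)$, we have $\I_*=\{(u,u^{-1}):u\in W\}$. The key first step is that appending a letter to an involution word amounts to a left or right multiplication of the corresponding $u$: because the two factors of $W\times W$ commute, for a word $g=(g_1,\dots,g_k)$ in $S\sqcup\tilde S$ the first-factor component of $g_k^*\circ\cdots\circ g_1^*\circ g_1\circ\cdots\circ g_k$ equals $c_1\circ\cdots\circ c_k$, where $c=(c_1,\dots,c_k)$ is the word in $S$ obtained by listing the $\tilde S$-letters of $g$ in reverse order of position and then the $S$-letters of $g$ in order of position, with all tildes erased. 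Thus if $g$ realizes $(u,u^{-1})$ then $c_1\circ\cdots\circ c_k=u$, so $\ell(u)=\ell(c_1\circ\cdots\circ c_k)\le k$, with equality --- i.e.\ $g\in\iR_*((u,u^{-1}))$ --- precisely when $c\in\cR(u)$ (and then the second-factor component is automatically $u^{-1}$). Conversely a reduced word $c\in\cR(u)$ together with an arbitrary ``type pattern'' $\pi\in\{L,R\}^{\ell(u)}$ (recording which positions of $g$ carry tilded letters) recovers a unique such $g$. This gives a bijection $\iR_*((u,u^{-1}))\cong\cR(u)\times\{L,R\}^{\ell(u)}$, and in particular $\ellhat_*((u,u^{-1}))=\ell(u)$.

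Next I would read off the relevant relations under this bijection. Because no generator is $*$-fixed, \eqref{theta-eq} forces the only pairs admitting a half-braid relation \eqref{folded-eq} to be $\{s,t\}=\{s_j,\tilde s_j\}$ with $m=1$, so the half-braid relations of $^2(W\times W)$ are exactly $(s_j,\text{ --- })\sim(\tilde s_j,\text{ --- })$ for $j\in[n]$; one checks that under the bijection this fixes $c$ and flips the first coordinate of $\pi$. Among ordinary braid relations, one between an $S$-letter and a $\tilde S$-letter has $m=2$ and hence just transposes two adjacent letters of $g$ of opposite type, which fixes $c$ and transposes two adjacent coordinates of $\pi$ of opposite values; a braid relation among $S$-letters only (resp.\ among $\tilde S$-letters only) fixes $\pi$ and performs an ordinary braid move on the $S$-portion (resp.\ $\tilde S$-portion) of $c$.

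The connectivity argument then proceeds as follows. With $c$ fixed, the half-braid move (flip $\pi_1$) together with the opposite-value adjacent transpositions of coordinates generate all of $\{L,R\}^{\ell(u)}$: the transpositions realize every rearrangement of a pattern with a given number of $L$'s, while flipping $\pi_1$ --- applied after first transposing an $R$ to the front, or directly to the all-$R$ pattern --- changes that number. Hence every $(c,\pi)$ is $\isB$-equivalent to the ``pure-$S$'' word $g=c$, that is, to $(c,(R,\dots,R))$. On these pure words the braid relations among $S$-letters act directly and realize every ordinary braid move on $c$, so by Matsumoto's theorem (see \cite[\S1.2]{GP}) the words $\{(c',(R,\dots,R)):c'\in\cR(u)\}$ all lie in a single $\isB$-class. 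Composing these two facts shows that $\iR_*((u,u^{-1}))$ is a single equivalence class under $\isB$, i.e.\ $^2(W\times W)$ is perfectly braided.

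I expect the main obstacle to be the bookkeeping in the second step: verifying carefully that $g\mapsto(c,\pi)$ is a bijection onto $\cR(u)\times\{L,R\}^{\ell(u)}$ and that each elementary relation induces exactly the claimed operation on pairs $(c,\pi)$. This demands attention to the position reversal applied to the $\tilde S$-letters and to the way the counts of $S$- and $\tilde S$-letters preceding a given position shift when a relation is applied, but it involves no real difficulty. The remaining ingredients --- the description of $\I_*$, the classification of half-braid relations from \eqref{theta-eq}, and the transitivity on $\{L,R\}^{\ell(u)}$ of adjacent transpositions together with one parity-changing move --- are routine. For $W$ of finite or affine type one could alternatively derive the classical cases from the parabolic configurations of type $^2(X_n\times X_n)$ in Proposition~\ref{par-prop} via Theorem~\ref{bounded-thm}, handling small ranks and exceptional types with the implementation \cite{code}; but the direct argument above is shorter and unconditional.
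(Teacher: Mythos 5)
Your proposal is correct, and it fills in details that the paper itself omits: the paper's proof of Proposition~\ref{wxw-prop} consists only of the remark that ``the result follows as a simple exercise,'' and your argument --- identifying $\I_*$ with $\{(u,u^{-1}):u\in W\}$, using the componentwise Demazure product to set up the bijection $\iR_*((u,u^{-1}))\cong\cR(u)\times\{L,R\}^{\ell(u)}$, and checking that the half-braid flips together with the $m=2$ commutations act transitively on the type patterns while the remaining braid moves realize Matsumoto equivalence on $c$ --- is a complete and correct solution along what is surely the intended route. The only point deserving extra care, which you flag yourself, is verifying that the half-braid move and the opposite-type commutations genuinely fix $c$ (this follows because these moves preserve the rank of each letter within the $S$-part and within the reversed $\tilde S$-part), and your alternative suggestion via Proposition~\ref{par-prop} would only cover classical types, so the direct unconditional argument is the right choice.
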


\begin{proof}
The result follows as a simple exercise; the details are left to the reader.
\end{proof}

Similarly, 
when Theorem~\ref{main-thm} holds for $(W,S,*)$, it follows that the result also holds for $(W_J, J, *)$ for any $J=J^*\subset S$.
Since Theorem~\ref{suff-thm} implies Theorem~\ref{main-thm} in rank two,
we therefore just need to prove Theorem~\ref{main-thm}
when $(W,S)$ is of type  $\tilde A_n$ ($n\geq 2$), $\tilde B_n$ ($n\geq 3$), $\tilde C_n$ ($n\geq 2$), $\tilde D_n$ ($n\geq 4$), $\tilde E_6$, $\tilde E_7$, $\tilde E_8$, $F_4$, $\tilde F_4$, $\tilde G_2$, or $H_4$. 
 We refer to \cite[Chapter 2]{Humphreys} for the definitions of these Coxeter systems.
Our proof of Theorem~\ref{main-thm} reduces to two propositions:

\begin{proposition}
Theorem~\ref{main-thm} holds for any twisted Coxeter system $(W,S,*)$ in which $(W,S)$ is an irreducible Coxeter system of type $\tilde E_6$, $\tilde E_7$, $\tilde E_8$, $F_4$, $\tilde F_4$, $\tilde G_2$, or $H_4$.
\end{proposition}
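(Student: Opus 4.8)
The plan is to reduce the statement to a terminating, and hence finite, run of the algorithms of Section~\ref{alg-sect}, whose correctness is guaranteed by Theorem~\ref{algwork-thm}. First I would note that, up to isomorphism, there are only finitely many (in fact ten) twisted Coxeter systems $(W,S,*)$ of the types named: for each of the seven connected Coxeter diagrams in question the admissible involutions $*$ are precisely its diagram automorphisms of order at most two, namely $*=\id$ together with the unique order-two symmetry for $\tilde E_6$, $\tilde E_7$, and $F_4$, and $*=\id$ alone for $\tilde E_8$, $\tilde F_4$, $\tilde G_2$, and $H_4$. For each of these ten systems it then suffices, by Theorem~\ref{algwork-thm}, to verify two things: (a) that Algorithm~\ref{alg3} terminates successfully and produces the forest $\F_{s,t}$ for every pair $s,t\in S$ with $2<m(s,t)<\infty$; and (b) that the induced relations $\sR=\bigcup_{s,t}\sR_{s,t}$ satisfy $\isB\cup\isB^+\Rightarrow\sR$, with $\isB^+$ as in Theorem~\ref{main-thm}.

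For (a) I would run the implementation \cite{code} of Algorithms~\ref{alg1}--\ref{alg3} on each of the ten systems; type $\tilde E_8$ is the most demanding and, as noted at the end of Section~\ref{alg-sect}, completes in about twenty minutes. The only point at which success is not automatic is the outer loop of Algorithm~\ref{alg2}, and there one relies on the descent-periodicity test. This test is sound precisely because each Coxeter system on the list is finite or affine: in finite type no descent-periodic branch can arise at all, while in affine type the maps $\sigma_i$ along such a branch are eventually quasi-polynomial in a way the test detects, certifying $\sigma_i\neq w|_{V_J}$ for all $w\in\I_*$ and permitting the loop to halt. Where it lightens the computation I would also invoke Theorem~\ref{bounded-thm}: for the many pairs $s,t$ whose surrounding data $(W,S,J,*\mid s,t)$ embeds as one of the parabolic configurations of classical type $A$, $B$, or $D$ handled in Proposition~\ref{par-prop}, the forest $\F_{s,t}$ and its induced relations are already determined there, so only the finitely many genuinely exceptional local pictures need to be computed afresh.

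For (b) I would, for each of the ten diagrams, list its induced copies of $^2A_3$, $B_3$, $H_3$, and $D_4$ that are fixed as required by $*$, and attach to each its exceptional braid relation; concretely, the simply-laced affine types $\tilde E_6,\tilde E_7,\tilde E_8$ contribute a $D_4$ copy at each trivalent node when $*=\id$, the nontrivial automorphisms of $\tilde E_6$ and $\tilde E_7$ contribute an $^2A_3$ copy formed by two $*$-exchanged nodes with a common $*$-fixed neighbour, $F_4$ and $\tilde F_4$ with $*=\id$ contribute the two $B_3$ copies straddling the double bond, $H_4$ contributes the single $H_3$ copy on its weight-five bond, and $\tilde G_2$, as well as $F_4$ with its order-two automorphism, contribute nothing, so that $\isB^+=\varnothing$ in those two cases. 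With $\isB^+$ thus made explicit, I would check --- again by the finite computation in \cite{code}, exactly as in the last assertion of Proposition~\ref{par-prop} --- that every relation of each $\sR_{s,t}$ follows from $\isB\cup\isB^+$. Combining (a) and (b) with Theorem~\ref{algwork-thm} yields the proposition. I expect the real obstacle to lie in (a), and within it in forcing the outer loop of Algorithm~\ref{alg2} to terminate: confirming that the descent-periodicity criterion actually fires on every non-involution branch produced in the exceptional affine cases (where the trees output by Algorithm~\ref{alg1} are largest, $\tilde E_8$ being the extreme case) is the delicate and resource-intensive step; once the forests $\F_{s,t}$ are in hand, the implications needed for (b) are routine finite verifications.
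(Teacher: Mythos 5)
Your proposal is correct and follows essentially the same route as the paper: the paper's proof is exactly a direct run of the implementation \cite{code} of Algorithm~\ref{alg3} on each of the finitely many twisted systems of these types (taking from 15 seconds in type $\tilde G_2$ to about 20 minutes in type $\tilde E_8$), followed by checking that the resulting spanning relations are equivalent to $\isB\cup\isB^+$. Your additional remarks (enumerating the admissible involutions up to conjugacy, listing the induced copies giving $\isB^+$, and optionally invoking Theorem~\ref{bounded-thm} to prune the computation) are consistent elaborations rather than a different method.
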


\begin{proof}
Using \cite{code}, we have calculated a set of relations spanning the sets $\iR_*(z)$ in each type.
On a basic laptop these computations take between 15 seconds (in type $\tilde G_2$) and  20 minutes (in type $\tilde E_8$).
The resulting relations in each type  are easily seen to be equivalent to the desired set $\isB \cup \isB^+$.
\end{proof}

\begin{proposition}
Theorem~\ref{main-thm} holds for any twisted Coxeter system $(W,S,*)$ in which $(W,S)$ is an irreducible affine Coxeter system of classical type.
\end{proposition}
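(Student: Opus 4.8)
The plan is to reduce, type by type, the problem of spanning $\iR_*(w)$ to a finite computation that has already been carried out via the parabolic configurations in Proposition~\ref{par-prop}, using the bounded-embedding machinery of Theorem~\ref{bounded-thm} together with Theorem~\ref{algwork-thm}. Concretely: for each irreducible affine classical system $(W,S,*)$ — that is, $(W,S)$ of type $\tilde A_n$, $\tilde B_n$, $\tilde C_n$, or $\tilde D_n$, with $*$ either trivial or the relevant diagram automorphism — and for each pair $s,t \in S$ with $2 < m(s,t) < \infty$, I would exhibit an order-preserving map $\phi : S_0 \to S$ from the generating set $S_0$ of one of the finite or product parabolic configurations of Proposition~\ref{par-prop} (of type $A_m$, $B_m$, $D_m$, $^2A_m$, $^2D_m$, $^2(X_m\times X_m)$, $D_7'$, etc.), sending the marked pair to $\{s,t\}$, and verify that $\phi$ extends to a bounded embedding $(W_0,S_0,J_0,\diamond) \hookrightarrow (W,S,*)$. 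Once this is done, Theorem~\ref{bounded-thm} identifies $\F_{s,t}(W,S,*)$ with the already-computed forest and shows that the induced relations $\sR_{s,t}(W,S,*)$ are the image under $\phi$ of the known relations $\sR_{s_0,t_0}$; by Proposition~\ref{par-prop} each of these is implied by $\isB \cup \isB^+$. Summing over all pairs $s,t$ gives $\isB \cup \isB^+ \Rightarrow \sR$, hence $\isB \cup \isB^+ \Rightarrow \sR_{\min}$, and then Theorem~\ref{algwork-thm} yields the conclusion that $\iR_*(w)$ is an equivalence class under $\isB \cup \isB^+$.

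The key steps, in order, are: (1) enumerate for each affine classical type and each choice of $*$ the orbits of pairs $\{s,t\}$ with $m(s,t)>2$ under the diagram automorphism group, so that only finitely many pairs need be treated; (2) for each such pair, locate inside the affine Coxeter diagram a parabolic subdiagram on $\partial_S J \cup J$ that matches one of the finite configurations of Proposition~\ref{par-prop}, checking that the boundary condition $\partial_{S'}\phi(J) = \phi(\partial_S J)$ of a bounded embedding holds — this is where one must be careful that the affine diagram does not introduce extra neighbours of $J$ beyond those present in the finite model, which is exactly what the hypothesis $\partial_S J \cup J \neq S$ in the definition of a bounded Coxeter system is designed to guarantee; (3) invoke Theorems~\ref{bounded-thm} and \ref{algwork-thm} to transport the conclusion; (4) handle small-rank base cases ($\tilde A_n$ for $n\leq 8$, $\tilde B_n$, $\tilde C_n$, $\tilde D_n$ for small $n$) directly with \cite{code}, exactly as in Example~\ref{atilde-ex}; and (5) check that the exceptional relations $\isB^+$ that actually occur — induced copies of $^2A_3$, $B_3$, $D_4$ — are precisely the ones arising from the configurations of types $^2A_9$, $B_5$, $D_7$, $^2D_7$ (and primed variants), i.e. that every relation in Table~\ref{parabolic-table} is a consequence of the corresponding exceptional braid relation of Section~\ref{rel-sect}.

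The main obstacle I expect is step (2): constructing, uniformly in $n$, a bounded embedding for every pair $\{s,t\}$ in the affine diagram. For pairs $\{s_i,s_{i+1}\}$ in the ``interior'' of a type-$A$ string this is essentially Example~\ref{atilde-ex}, but the affine diagrams have special vertices — the extra node in $\tilde A_n$, the forked ends of $\tilde B_n$, $\tilde C_n$, $\tilde D_n$, and in $\tilde C_n$ the two double-bond ends — and near those vertices one must choose $J$ so that the local picture is a genuine parabolic subsystem of finite type (necessarily one of $A_m$, $B_m$, $D_m$, or a product of two such when the automorphism $*$ swaps the two ends of the affine diagram, producing a $^2(X_m\times X_m)$ pattern). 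Verifying the boundary equation $\partial_{S'}\phi(J)=\phi(\partial_S J)$ at these special vertices — and choosing $J$ large enough that the finite configuration already "sees" all neighbours of the marked pair, yet small enough that $\partial_S J \cup J \neq S$ — is the delicate bookkeeping that makes the argument go through. Once the embeddings are in place the rest is formal, and for the finitely many small-rank exceptions one appeals directly to the computer verification \cite{code}; details are routine and are carried out type by type.
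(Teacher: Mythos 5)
Your proposal follows essentially the same route as the paper: the paper likewise shows that each affine classical $(W,S,*)$ with $n$ sufficiently large is covered by an explicit finite list of the parabolic configurations from Proposition~\ref{par-prop} (e.g.\ types $^2A_8,\dots,{}^2A_{16}$ and $^2(A_8\times A_8)$ for $\tilde A_n$ with $s_i^*=s_{n-i}$), then combines Theorems~\ref{algwork-thm} and \ref{bounded-thm} with Proposition~\ref{par-prop}, and checks the remaining small-rank cases directly with \cite{code}. The only difference is that the paper makes your step~(2) concrete by enumerating, case by case for each conjugacy class of diagram involutions in types $\tilde A_n$--$\tilde D_n$, exactly which configurations are needed and for which ranks the covering is valid.
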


We say that a twisted Coxeter system $(W',S',\diamond)$ is \emph{covered}
by a collection of parabolic configurations $\sP$ if for each $s',t' \in S'$ with $2<m(s',t')<\infty$
there exists $(W,S,J,*\hs|\hs s,t) \in \sP$ and a bounded embedding
$\phi :(W,S,J,*\hs|\hs s,t) \to (W',S',\diamond)$ with $\phi(s) = s'$ and $\phi(t) =t'$.

\begin{proof}
We may assume that $(W,S)$ has type $\tilde A_n$, $\tilde B_n$, $\tilde C_n$, or $\tilde D_n$ and $n\geq 2$.
Label the elements of $S$ as  in Table~\ref{affine-tbl}.
For each type,
we can consider just one involution $*$ from each conjugacy class in the group of $S$-preserving automorphisms of $(W,S)$. In type $\tilde A_n$ there are four cases:
\ben
\item[(A1)] Suppose $(W,S)$ has type $\tilde A_n$ and $*=\id$.  This case
was already discussed in Example~\ref{atilde-ex}, where we saw that $(W,S,*)$ is perfectly braided, as Theorem~\ref{main-thm} predicts.

\item[(A2)] Suppose $(W,S)$ has type $\tilde A_{n}$  and $s_i^*= s_{n-i}$ for all $i$. 
If $n>9$, then $(W,S,*)$ is covered by the parabolic configurations of type $^2A_8$, $^2A_9$, $^2A_{10}$, \dots, $^2A_{16}$, and $^2(A_8\times A_8)$. 

\item[(A3)] Suppose $(W,S)$ has type $\tilde A_{2n-1}$  and $s_i^* = s_{2n-i}$ for $i \in [n]$ and $s_0^*= s_0$. 
If $n>8$, then $(W,S,*)$ is covered by the parabolic configurations of type
$^2A_9$, $^2A_{11}$, $^2A_{13}$, $^2A_{15}$, and $^2(A_8\times A_8)$. 

\item[(A4)] Suppose $(W,S)$ has type $\tilde A_{2n-1}$ and  $ s_i^*= s_{i+n}$ for $0\leq i < n$.
If $n>8$ then $(W,S,*)$ is covered by the single parabolic configuration of type
$^2(A_8\times A_8)$. 
\een
In each case,  if $n$ is sufficiently large then $(W,S,*)$ is covered by a finite set of parabolic configurations,
and  the desired result follows by combining Theorems~\ref{algwork-thm} and  \ref{bounded-thm} and Proposition~\ref{par-prop}.
For the remaining values of $n$ we have checked Theorem~\ref{main-thm} directly by computer, using our implementation \cite{code} of Algorithm~\ref{alg3}. 
We conclude that Theorem~\ref{main-thm} holds in type $\tilde A_n$.
Our analysis in types $\tilde B_n$, $\tilde C_n$, and $\tilde D_n$ is similar. There are two cases for each type:
\ben
\item[(B1)] Suppose $(W,S)$ has type $\tilde B_n$ and $*=\id$.
If $n>10$ then $(W,S,*)$ is covered by the parabolic configurations of type $A_8$, $B_5$, $B_6$, $B_7$, $B_8$, $D_7$, $D_7'$, $D_8$, and $D_9$.

\item[(B2)] Suppose $(W,S)$ has type $\tilde B_{n}$ and $* $ is nontrivial.
If $n>10$ then $(W,S,*)$ is covered by the parabolic configurations of type $A_8$, $B_5$, $B_6$, $B_7$, $B_8$, $^2D_7$, $^2D_7'$, $^2D_8$, and $^2D_9$.

\item[(C1)] Suppose $(W,S)$ has type $\tilde C_n$ and $*=\id$.
If $n>9$ then $(W,S,*)$ is covered by the parabolic configurations of type $A_8$, $B_5$, $B_6$, $B_7$, and $B_8$.

\item[(C2)] Suppose $(W,S)$ has type $\tilde C_{n}$ and $*$ is nontrivial.
If $n>18$ then $(W,S,*)$ is covered by the parabolic configurations of type $^2(A_8\times A_8)$, 
$^2(B_5 \times B_5)$, $^2(B_6 \times B_6)$, $^2(B_7 \times B_8)$, 
 $^2(B_8 \times B_8)$, 
and either
$^2A_8$, $^2A_{10}$, \dots, $^2A_{16}$ (if $n$ is odd)
or 
$^2A_9$, $^2A_{11}$, \dots, $^2A_{15}$ (if $n$ is even).

\item[(D1)] Suppose $(W,S)$ has type $\tilde D_n$ and $*$ is a diagram involution which fixes $s_2,s_3,\dots,s_{n-2}$.
There are four choices for $*$.
If $n>11$, then $(W,S,*)$ is covered by (a subset of) the parabolic configurations of type $A_8$, $D_7$, $D_7'$, $D_8$, $D_9$, 
$^2D_5$, $^2D_6$, $^2D_7$, $^2D_7'$, $^2D_8$, and $^2D_9$.

\item[(D2)] Suppose $(W,S)$ has type $\tilde D_{n}$ and $s_i^*= s_{n-i}$
for $0\leq i\leq n$.
If $n>20$ then $(W,S,*)$ is covered by 
the parabolic configurations of type
$^2(A_8\times A_8)$, $^2(D_7 \times D_7)$, $^2(D_7 \times D_7)'$, $^2(D_8 \times D_8)$,
$^2(D_9 \times D_9)$, and either $^2A_8$, $^2A_{10}$, \dots, $^2A_{16}$ (if $n$ is even)
or 
$^2A_9$, $^2A_{11}$, \dots, $^2A_{15}$ (if $n$ is odd).
\een
In all of these cases, we obtain the desired result either from Theorems~\ref{algwork-thm} and  \ref{bounded-thm} and  Proposition~\ref{par-prop} when $n$ is sufficiently large, or by direct computation \cite{code}.
\end{proof}

Together, the results in this section imply Theorem~\ref{main-thm}.

\end{document}